\theoremstyle{plain}
\newtheorem{theorem}{Theorem}[section]
\newtheorem{lemma}[theorem]{Lemma}
\newtheorem{proposition}[theorem]{Proposition}
\newtheorem{assumption}[theorem]{Assumption}
\newtheorem{corollary}[theorem]{Corollary}
\theoremstyle{definition}
\newtheorem{example}[theorem]{Example}
\newtheorem{definition}[theorem]{Definition}
\newtheorem{remark}[theorem]{Remark}
\newtheorem*{idea}{Idea of proof}
\newcommand*\bigcdot{\mathpalette\bigcdot@{.5}}
\newcommand*\bigcdot@[2]{\mathbin{\vcenter{\hbox{\scalebox{#2}{$\m@th#1\bullet$}}}}}
\newcommand{\bb}[2]{\ensuremath{(#1\bigcdot#2)}}
\newcommand{\R}{{\mathbb{R}}} 
\newcommand{\T}{\ensuremath{{\mathbb T}}} 
\newcommand{\X}{{\mathbf{X}}}
\newcommand{\Y}{{\mathbf{Y}}} 
\newcommand{\dd}{d}
\newcommand{\A}{{\mathbf A}}
\newcommand{\les}{\lesssim} 
\newcommand{\LL}{{\mathbb L}}
\title{Unbounded rough drivers, rough PDEs and applications}
\author{Antoine Hocquet \\ Technical University Berlin, Germany 
	\and Martina Hofmanov\'a \\ Bielefeld University, Germany
	\and Torstein Nilssen \\ University of Agder, Norway}
\date{\today}
\begin{document}
\maketitle

 \begin{abstract}
	A summary of recent contributions in the field of rough partial differential equations  is given. For that purpose we rely on the formalism of ``unbounded rough driver''. We present applications to concrete models including Landau-Lifshitz-Gilbert, Navier-Stokes and Euler equations.
\end{abstract}

\tableofcontents

\section{Introduction}
This review article aims to gather and explain some recent results obtained for rough partial differential equations  (RPDEs) with linear rough input, in connection with the so-called \textit{variational approach} introduced in \cite{bailleul2017unbounded,deya2019priori}. A central tool for that purpose is the concept of ``unbounded rough driver'' which is an object analogous to a rough path but possibly taking values in some space of unbounded operators.

The variational approach for rough PDEs is motivated in particular by filtering theory or stochastic conservation laws.
Parabolic rough PDEs of the following form are considered:
\begin{equation}
\left\{
\begin{aligned}
\label{ansatz_RPDE}
&du_t + G_t(x,u_t,\nabla u_t,\nabla^2u_t)dt = H_i(u_{t},\nabla u_t)d\X_t^i(x) ,\quad \text{on}\enskip [0,T]\times\R^d\,,
\\
&u_0\in L^2(\R^d),
\end{aligned}
\right.
\end{equation}
(with Einstein's summation convention over repeated indices),
where each \( H_i\) is affine linear and various assumptions on the coefficients $G$, $H_{i}$ are in order. 
Herein \( t\mapsto \X_t(\cdot)\) denotes a suitable enhancement of the coefficient path \( t\mapsto X_t(\cdot) \) which, seen as a path from \( [0,T] \) to a well-chosen functional space (for the space variable \( x\in \R^d \)), is \( \alpha \)-H\"older regular with \( \alpha>\frac13 \). 
A possible though non-generic choice is that of a transport multiplicative rough input where the time and space variables are split, e.g.
\( H_i(u,\nabla u)=\partial_i u ,\) \( i=1,\dots ,d, \) in \eqref{ansatz_RPDE} and
\[
X_t^i(x)=\sigma_\mu^i(x)Y_t^\mu, 
\]
(see Section \ref{sec:specific_URD}) and in that case we also write
\[
H_i(u_{t},\nabla u_t)d\X^i_t(x)=\sigma_\mu^i(x)\cdot\partial_i u_t d\Y^\mu_t.
\]
Here \( (Y^\mu)_{1\le \mu\le m} \) is the first level of an \( \alpha \)-H\"older rough path in \( \R^m \) and \( (\sigma _\mu)_{1\le \mu\le m}\) are vector fields.
In contrast with the time regularity which is allowed here to be low,
the existing literature assumes that the map \( x\mapsto X_t(x) \) has enough regularity (\( t\in [0,T] \) being frozen) which in turn imposes regularity on the vector fields. For all practical purposes, it would be enough to assume bounded differentiability up to order \( 3 \). To avoid cumbersome discussions about indices and to simplify the statements, we choose here to work with spatially smooth coefficients \( X_t(\cdot) \), \( \sigma(\cdot) \).

\subsection*{Frequently used notations}
For Banach spaces \( E,F \), we write \( E\hookrightarrow F\) if \( E \) is continuously embedded in \( F. \)
We write \( L(E,F) \) for the space of bounded linear operators \( T\colon E\to F \) equipped with the usual induced norm \( |T|_{L(E,F)}=\sup_{|f|_E\le1}|Tf|_{F} .\)  
For an open set \( U\subset\R^d \) we denote by \( L^p(U;E) , p\in [1,\infty]\), the usual Lebesgue spaces of equivalence classes of functions from \( U\to E \). Accordingly we let \( \mathscr S'(U,E) \) be the space of $E$-valued Schwartz distributions on \( U \).

Throughout we assume that \( T>0 \) is a finite time-horizon and we introduce the simplices
\[
\begin{aligned}
	&\Delta (c,d) \vcentcolon=\{(s,t)\in[0,T]^2,\enskip c\le s\leq t\le d\},
	\\
	&\Delta _2(c,d)\vcentcolon=\{(s,\theta,t)\in[0,T]^3,\enskip c\leq s\leq \theta \leq t\leq d\}\,,
\end{aligned}
\]
for every \( [c,d]\subset[0,T] \).
We will use the abbreviations \( \Delta=\Delta(0,T)\) and \( \Delta_2=\Delta_2(0,T). \)
If \( v\colon [0,T]\to E\) is a continuous path, we denote its increment by \( \delta v\colon \Delta\to E \), that is
\begin{equation}
	\label{nota:increments}
	\delta v_{st}\vcentcolon=v_t-v_s,
\end{equation} 
for every \( (s,t)\in\Delta \).
For \( z=(z_{st})\colon \Delta\to E \) we also define \( \delta z \colon \Delta_2\to E\) as the 3-parameter quantity
\begin{equation}
\delta z_{s\theta t}=z_{st}-z_{s\theta}-z_{\theta t},
\end{equation}
for each \( (s,\theta,t)\in \Delta_2 \).
For $\alpha\in (0,1)$, we denote by \( C^\alpha=C^\alpha([0,T];E) \)  the space of \( \alpha \)-H\"older continuous paths from \( [0,T]\to E \). By \( C_2^{\alpha}=C_2^{\alpha}([0,T];E) \) we denote the set of mappings  \( z:\Delta \to E \) such that
$$
[z]_{\alpha;E}:=\sup_{s,t\in \Delta, s\neq t}\frac{\|z_{s t}\|_{E}}{|t-s|^{\alpha}}<\infty.
$$
We use the same notation $[z]_{\alpha;E}$ or simply $[z]_{\alpha}$ for the usual  H\"older semi-norm in $C^\alpha=C^\alpha([0,T];E)$.

\section{Unbounded rough drivers}

Herein we assume that a family of Banach spaces \( E=(E_\beta,|\cdot|_{\beta})_{\beta\in \R}\) is given such that \( E_{{\gamma}}\hookrightarrow E_{{\beta}} \) for \( \beta\le\gamma \in \mathbb R. \)%
\footnote{It is not necessary to choose the whole real line as index set (a well-chosen subinterval would suffice), however this turns out to simplify the presentation. Accordingly, we will work with coefficients that are  smooth in space, with all derivatives being bounded.}
We suppose for the sake of presentation that each \( E_\beta \) for \(\beta\in \R\) is continuously embedded in the space of Schwartz distributions on \( \R^d \) with values in \( \R^{k} \)  for some integers \( d,k\ge1 \) (so as to include systems of equations). 

\subsection{Formal definition}

Loosely speaking, an unbounded rough driver is a rough path whose values are not real numbers or $\mathbb{R}^{m}$ vectors but rather linear operators. We want to include the case of unbounded operators, which are described by a negative shift of indices in the corresponding scale, and this explains why we cannot work on a single Banach space. To simplify our settings, it is convenient to assume some additional compatibility properties between the spaces \( E_{\beta} \) for \( \beta\in \R \). The following interpolation assumption, which will be granted throughout the manuscript, is satisfied on a number of classical examples (see Example \ref{exa:scales}).
For instance, it is fulfilled if there is a family of smoothing operators on \( E \) in the sense of \cite{hocquet2018energy,hofmanova2019navier}.
\begin{assumption}[Interpolation between intermediate spaces]
	\label{ass:intermediate}
 The scale \( (E_\beta)_{\beta\in \R}\) satisfies the following property for any \( \beta\le \iota\le\gamma \in \R \):
for each \( f\in E_\iota \) and every \( \eta>0 \) small enough, there is a decomposition
\begin{equation}
\label{decomp_tilde}
f=f^\eta + \tilde f^\eta \,
\in\,
E_\gamma + E_\beta
\end{equation}
such that 
\begin{equation}
\label{decomp_quant}	
|f^\eta|_{\gamma}\les \eta^{-(\gamma-\iota)}|f|_{\iota}
\quad \text{while}\quad 
|\tilde f^{\eta}|_{\beta} \les \eta^{\iota-\beta} |f|_{\iota}.
\end{equation}
\end{assumption}

As it turns out, properties \eqref{decomp_tilde}-\eqref{decomp_quant} are equivalent to the continuous embedding
\begin{equation}
	\label{intermediate_spaces}
	E_\iota \hookrightarrow
	(E_\beta,E_\gamma)_{\frac{\iota-\beta}{\gamma-\beta},\infty}
\end{equation}
where the right most term denotes the real interpolation space of order \( (\theta,q)=(\frac{\iota-\beta}{\gamma-\beta},\infty) \) between the Banach spaces \( X=E_{\beta} \) and \( Y=E_\gamma \) (see \cite{lunardi2009interpolation}). 
These are satisfied by quite a number of functional spaces, as illustrated above.

\begin{example}\label{exa:scales}
It is well-known that \eqref{intermediate_spaces} (and hence \eqref{decomp_tilde}-\eqref{decomp_quant}) is satisfied, for instance if 
\begin{itemize}
\item \( E_\beta = B^{\beta}_{p,q} \) is the usual Besov scale with \( p,q\in [1,\infty]\).
This includes the Sobolev scale \(  E_\beta =W^{\beta,p} \) by making the choice \( p=q\).
\item \( E_{\beta}=H^{\beta,p}\) is the Bessel potential scale.
\item \( E_\beta =D(L^{\beta/2})\) where \( L \) is a maximal accretive operator with bounded imaginary powers (the factor \( 1/2\) here is only for convenience).
\end{itemize}
We refer e.g.\ to \cite{triebel1983theory} for definitions.
\end{example}

For an operator \( T\) from \( \mathscr S'(\R^d,\R^k) \) to itself and a scale as above (recall that \( E_\beta\hookrightarrow \mathscr S'(\R^d,\R^k)  \), \( \forall\beta\in \R \)), we write
\[ T\in L(E) \]
if the operator $T$ acts on $E$ in the following sense: for any \( \beta\in \R \) there is another \( \beta_1\in \R \) such that \( T\in L(E_\beta,E_{\beta_1}).\)
Fix two real numbers \( \sigma\ge0  \), \( \alpha\in (\frac13,\frac12]\) and consider an operator-valued path \( A \colon [0,T]\to L(E)\) such that
\[
|A_t-A_s|_{L(E_\beta,E_{\beta-\sigma})} \lesssim _\beta (t-s)^\alpha\text{ for every } (s,t)\in \Delta.
\]
As it is the case for differential equations driven by rough paths, we cannot address \eqref{ansatz_RPDE} based on the sole knowledge of $(A_t)_{t\in [0,T]}$ (or equivalently on its increments \( A^1_{st}:=\delta A_{st} \) if \( A_0=0 \)). Rather, we need to assume that an ``enhancement'' \( \A=(A^1,A^2) \) thereof is given, containing consistent additional data, namely,  the iterated integral of \( A \). 
This leads us to the next definition.

\begin{definition}[Unbounded rough driver on \( E \)]
	\label{def:URD}
	Fix $\alpha \in (\frac13,\frac12]$ and  \( \sigma\in[0,1] .\)
	
	\noindent A $2$-index family $\A_{st}\equiv(A^1_{st},A^2_{st})_{(s,t)\in\Delta }$ of linear operators in $E$ is called an \emph{unbounded rough driver} of \( \alpha \)-H\"older regularity provided
	\begin{enumerate}[label=(\roman*)]
		\item \label{RD1}
		for $i=1,2,$ \( \beta\in \R ,\) and every \( (s,t)\in \Delta \), the operator
		$A^i_{st}$ is bounded from \( E_{\beta}\to E_{\beta-i\sigma} \) and satisfies the uniform bound
		\begin{equation}
		\label{bounds:rough_drivers}
		|A^i_{st}|_{L(E_{\beta},E_{\beta-i\sigma})}\les_{\beta} (t-s)^{i\alpha}\,.
		\end{equation}
		\item \label{RD2}
		Chen's relations hold true, namely, for every $(s,\theta ,t)\in\Delta _2,$ we have in the sense of linear operators:
		\begin{equation}\label{chen}
		\delta A^1_{s\theta t}=0\,,\quad \delta A^2_{s\theta t}=A^1_{\theta t}A^1_{s\theta } \,.
		\end{equation}
	\end{enumerate}
We call $\A$ \emph{geometric} if a sequence of smooth paths $A(n),n\geq 0,$ exists such that 
 for every \( \beta\in \R\)
	\begin{align}
	\label{rho_alpha}
	[\A(n)-\A]_{\alpha,\beta}:=
	\|A^1(n)-A^1\|_{C^\alpha_2 (0,T;L(E_{\beta},E_{\beta-\sigma}))}
	+[A^2(n)-A^2]_{C^{2\alpha}_2(0,T;L(E_{\beta},E_{\beta-2\sigma}))}\to0.
	\end{align}
Here for each \( n\ge0 \) we have denoted by $\A(n)$ the \textit{canonical lift} corresponding to
\( A^1_{st}(n):= A_{t}(n)-A_{s}(n) \), with the level-two component
\[
A^{2}_{st}(n):= \int_s^t  d A^1_r(n)A_{sr}^1(n)\,.
\]
\end{definition}

\subsection{Specific examples}
\label{sec:specific_URD}

Herein we let \( (E_\beta)_{\beta} \) be any of the scales in Example \ref{exa:scales}, and we recall that \( E_\beta\hookrightarrow \mathscr S'(\R^d,\R^k) \) for some  integers \( d,k\ge1. \)

\subsubsection{Operator-valued paths of order one}
\label{sec:diff}
We start with the case of differential operators acting on scalar functions.

\paragraph{The case of differential operators}
Fix \( k=1 \) and assume that the first level is given by a differential operator of order one, that is
\begin{equation}
\label{B1_diff}
 A^1_{st}= X^i_{st}(x) \partial_i + X^0_{st}(x)
\end{equation}
for some coefficient path $t\mapsto X_t(x)\in \R^{d+1}$ which is smooth in space and $\alpha$-H\"older in time (by the latter, we mean that \( |X_{st}(\cdot)|_{C_b^\beta}\lesssim_\beta (t-s)^{\alpha} \) for each \( \beta\ge0 \)). This is clearly enough to ensure that \eqref{bounds:rough_drivers} holds for \( i=1 \), but the case \( i=2 \) is a bit less clear since there is some freedom in the definition of the operators \( A^2_{st} \).

\subparagraph{Geometric enhancement property}
Despite the lack of a canonical meaning for the iterated integral of \( t\mapsto A_t \) with itself, geometricity in the sense of Def.~\ref{def:URD} imposes further additional constraints on the second level of $\A$.
This is illustrated by the next lemma, whose proof can be found in \cite{hocquet2020ito,hocquet2021quasilinear}.

\begin{lemma}
	\label{lem:geometric}
Let \( \A \) be a geometric enhancement of \eqref{B1_diff}. 
There is a \( 2 \)-index family of coefficents $\bb{X}{\nabla X^i}_{i=0,1,\dots,d}$ which is subject to the Chen's-type relation
\begin{equation}
	\label{chen_L}
	\delta \bb{X}{\nabla X^i}_{s\theta t}(x)=\partial _j X_{s\theta }^i(x)X_{\theta t}^j(x) \,,\quad (s,\theta,t)\in\Delta_2,
\end{equation} 
for each \( x\in \R^d \), \( i=0,1,\dots ,d \),
such that the second level of \( \A \) is given by
\begin{equation}
\label{weak_geo}
\begin{aligned}
	A^2_{st}
&:= \tfrac12X^i _{st}X^j _{st}\partial _{ij} + \bb{X}{\nabla X^i} _{st}\partial _i + \bb{X}{\nabla X^0}_{st} + \frac12(X^0_{st})^2\,.
	\end{aligned}
\end{equation}	
\end{lemma}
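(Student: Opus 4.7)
The plan is to exploit geometricity to first derive \eqref{weak_geo} at the level of a smooth approximation, and then pass to the limit using the convergence \eqref{rho_alpha}.

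First I would fix a smooth approximating sequence $A(n)$ as granted by Definition \ref{def:URD}, with $A^1_{st}(n) = X^i_{st}(n)\partial_i + X^0_{st}(n)$, so that the canonical lifts $\A(n)$ converge to $\A$ in the sense of \eqref{rho_alpha}. Since $A(n)$ is smooth in time, the second level $A^2_{st}(n) = \int_s^t dA^1_r(n)\,A^1_{sr}(n)$ is a classical Riemann--Stieltjes integral of an operator-valued path. Applied to a test function $f \in \mathscr S(\R^d)$, expansion via the Leibniz rule $dA^1_r(n)[g] = dX^i_r(n)\partial_i g + dX^0_r(n) g$ produces a finite sum of iterated integrals of $X^\bullet(n)$ and its spatial derivatives, multiplying $\partial_{ij}f$, $\partial_j f$, and $f$ respectively.

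The main algebraic step is to sort these terms by differential order. The pure second-order contribution has coefficient $\int_s^t dX^i_r(n) X^j_{sr}(n)$; since it is contracted against the symmetric tensor $\partial_{ij}$, the classical integration-by-parts identity $\int dX^i X^j + \int dX^j X^i = X^i_{st} X^j_{st}$ symmetrizes it into $\tfrac12 X^i_{st}(n) X^j_{st}(n)\partial_{ij}$, and likewise $\int dX^0 X^0 = \tfrac12 (X^0_{st}(n))^2$. The remaining mixed pieces are taken as the \emph{definition} of the coefficients $\bb{X(n)}{\nabla X^i(n)}_{st}$: for each $i = 0, 1, \dots, d$ one sets this to be the iterated integral $\int_s^t \partial_k X^i_{sr}(n)\,dX^k_r(n)$ (possibly augmented by a cross-term $X^i_{st}(n) X^0_{st}(n)$ coming from IBP between $X^i$ and $X^0$, in order to match the form of \eqref{weak_geo}). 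The Chen-type relation \eqref{chen_L} is then immediate from the standard identity $\delta\bigl(\int_s^{\cdot} f\,dg\bigr)_{s\theta t} = f_{s\theta}(g_t - g_\theta)$ together with the increment decomposition of the product $X^i X^0$.

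Finally I would pass to the limit $n \to \infty$. By \eqref{rho_alpha}, $A^2_{st}(n)\to A^2_{st}$ in $L(E_\beta, E_{\beta-2\sigma})$, while the first-level coefficients $X^\bullet(n)$ converge to $X^\bullet$ together with all spatial derivatives, thanks to the spatial-smoothness assumption of Section \ref{sec:specific_URD}. Thus the explicit terms $\tfrac12 X^i_{st}(n) X^j_{st}(n)\partial_{ij}$ and $\tfrac12(X^0_{st}(n))^2$ converge to their natural limits, and subtracting them from $A^2_{st}(n)$ leaves an operator of order at most one whose coefficients must also converge. These limits define $\bb{X}{\nabla X^i}_{st}$, and the Chen relation \eqref{chen_L} is inherited from its smooth counterpart. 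In my view the main obstacle is precisely this last step: since operator-norm convergence does not a priori yield pointwise convergence of the coefficient fields, one has to use the differential structure together with the spatial smoothness of $X^\bullet$ to isolate each order of derivative — for instance by testing against suitably localized polynomials — so that each $\bb{X(n)}{\nabla X^i(n)}$ can be extracted as a well-defined limit.
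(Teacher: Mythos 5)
The paper itself does not present a proof of this lemma; it refers to \cite{hocquet2020ito,hocquet2021quasilinear}, so a line-by-line comparison is not possible. Your overall strategy --- expand the canonical lift $A^2_{st}(n)=\int_s^t dA^1_r(n)\,A^1_{sr}(n)$ by the Leibniz rule, symmetrize the top order via integration by parts, collect the remainder as the definition of $\bb{X(n)}{\nabla X^i(n)}$, then pass to the limit --- is the natural one and almost certainly the intended route. However, two points in your write-up should be resolved rather than hedged.

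First, the cross-term. Carrying out the expansion, the coefficient of $\partial_i$ in $A^2_{st}(n)$ is
\begin{equation*}
\int_s^t \partial_k X^i_{sr}(n)\,dX^k_r(n) \;+\; \int_s^t X^0_{sr}(n)\,dX^i_r(n) \;+\; \int_s^t X^i_{sr}(n)\,dX^0_r(n),
\end{equation*}
and the last two terms combine, by integration by parts, into $X^0_{st}(n)X^i_{st}(n)$. So the cross-term is definitely present, not ``possibly'' present. If you absorb it into the definition of $\bb{X(n)}{\nabla X^i(n)}$ so as to match the displayed form of \eqref{weak_geo}, then applying $\delta$ produces the extra contribution $X^0_{s\theta}X^i_{\theta t}+X^0_{\theta t}X^i_{s\theta}$, which is not accounted for by the right-hand side of \eqref{chen_L}. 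You should reconcile this explicitly --- either by spelling out the index convention in \eqref{chen_L}, or by keeping the cross-term $X^0_{st}X^i_{st}\partial_i$ as a separate summand in the decomposition of $A^2$ (as is done later in Proposition~\ref{pro:product}, where it appears with coefficient $2$). As written, the hedge leaves the proof of the Chen relation incomplete.

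Second, the limit passage tacitly assumes that the approximating sequence $A(n)$ of Definition~\ref{def:URD} consists of first-order differential operators whose coefficients, together with all needed spatial derivatives, converge to $X^\bullet$. The abstract definition of geometricity only gives a smooth path in $L(E)$ and operator-norm convergence in the sense of \eqref{rho_alpha}; it does not a priori provide the differential-operator structure of $A(n)$ nor pointwise convergence of coefficient fields. Your proposed fix of testing against localized polynomials is the right idea and needs to be made part of the argument, not an afterthought: evaluating a differential operator of order $\le 2$ against $1$, against $x^i$, and against $x^i x^j$ on a neighbourhood of any fixed point recovers its zeroth-, first- and second-order coefficients, and since the $A^2_{st}(n)$ are second-order operators, the operator-norm convergence then yields the coefficient convergence you need, provided you first justify (or impose by hypothesis) that the approximating sequence respects the form \eqref{B1_diff}.
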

Note that the symbol \( \bb{X}{\nabla X} \) must be thought of as a placeholder for the a priori ill-defined quantity ``\( \int_s^t dX_{r}\cdot  \nabla \delta X_{s,r} \)'', \( (s,t)\in \Delta \). In this sense, it is part of the required additional data to make sense of level-two and give \eqref{ansatz_RPDE} a proper meaning.

Interestingly enough, the relation \eqref{weak_geo} yields the following identity for the ``non-commutative bracket'' 
\[
2A^2_{st}-(A^1_{st})^2\equiv(2\bb{X}{\nabla X}_{st}-X_{st}\cdot\nabla X_{st})\cdot\nabla 
\]
which is not zero in general, even though \( \A \) was obtained as a limit of canonical enhancements. This effect of ``non-commutativity of the coordinates'' highlights the constrast between geometric unbounded rough drivers and geometric rough paths (for a real-valued geometric rough path \( \mathbf Y \in \mathscr C^\alpha_g(\mathbb R)\), it is indeed always true that \( 2Y^2_{st}-(Y^1_{st})^2 \) vanishes identically).

\begin{example}
	\label{example:bracket}
	Let $(\delta Y^\mu,\mathbb Y^{\mu\nu})_{1\leq \mu,\nu\leq m}$ be an $m$-dimensional, two-step geometric rough path of H\"older regularity $\alpha>\frac13$ and suppose that \( X^0=0 \) while
	\[ 
	X^i_{st}(x)=\sigma^i_\mu(x) \delta Y^\mu_{st},\quad i=1,\dots ,d,
	\]
for given smooth and bounded coefficients \( \sigma^i_{\mu}\). 
	Then we have
\[
\bb{X}{\nabla X^i}_{st}:= \mathbb Y_{st}^{\mu \nu} \partial_j\sigma^i_\mu \sigma^j_\nu\,.
\]
If we introduce the \textit{L\'evy area} $\mathbb L_{st}=\frac12(\mathbb Y - \mathbb Y^{\star})$, we see using geometricity that
	\begin{equation}\label{id_bracket}
	A^2_{st}= \tfrac12(A^1_{st})^2 + \mathbb L^{\mu \nu}_{st}[\sigma_\nu,\sigma_\mu]\cdot\nabla.
	\end{equation} 
	where $[\cdot ,\cdot]$ is the usual Lie bracket for vector fields. 
	If the vector fields $\sigma_\mu^{\cdot}(x)\in \R^d$ commute for \( \mu=1,\dots ,m \), the above relation tells us that the non-commutative bracket is zero, or equivalently that \(A^2\equiv(A^1)^2/2. \) Hence, in this specific case there is no need for additional information: we can make sense of \eqref{ansatz_RPDE} without any additional structure.
\end{example}

The algebraic identities of Example \eqref{example:bracket} are still valid if we assume instead that the rough path \( \Y \) is only weakly geometric.
Recall that a rough path $(\delta Y^\mu,\mathbb Y^{\mu\nu})_{1\leq \mu,\nu\leq m}$  of H\"older regularity $\alpha > \frac13$ 
 is called \emph{weakly geometric} 
 if it satisfies the symmetry condition
$$
\mathbb{Y}^{\mu \nu}_{st} + \mathbb{Y}^{\nu\mu}_{st} = 2\delta Y^{\mu}_{st} \delta Y^{\nu}_{st}
$$
for all \( (s,t)\in \Delta \). 
From \cite{FrizVictoir} we know that there exists a sequence of smooth paths $Y(n): [0,T] \rightarrow \R^m$ such that its canonical lift $(\delta Y^{\mu}(n), \mathbb Y^{\mu\nu}(n))_{1\leq \mu,\nu\leq m}$ converges to $(\delta Y^\mu,\mathbb Y^{\mu\nu})_{1\leq \mu,\nu\leq m}$ in the $\alpha'$-rough path metric for every $\alpha'\in (\frac13, \alpha)$.

\subparagraph{Weak versus strong URD geometricity}
It was noted in \cite{CN} that an unbounded rough driver can also be constructed from an infinite-dimensional rough path as follows. Assume that 
$
X: [0,T] \rightarrow C^3_b(\R^d;\R^d)
$
lifts to an infinite-dimensional rough path in the sense that there exists 
$$
\mathbb{X} : \Delta  \rightarrow C^3_b(\R^d \times \R^d; \R^{d \times d})
$$
such that 
$$
\delta \mathbb{X}_{s \theta t}^{ij}(x,y) = X_{s \theta}^i(x) X_{\theta t}^j(y) .
$$
Denote by $\nabla_x^{\otimes} : C^3_b(\R^d \times \R^d; \R^{d \times d}) \rightarrow C^2_b(\R^d \times \R^d; \R^d)$ is the closure of the operator defined on tensors $f \otimes g$ by
$$
(\nabla_x^{\otimes} (f \otimes g))^j (x,y) = g^i(y) \partial_i f(x)^j. 
$$
It is then readily checked that 
\begin{equation} \label{eq:inf_dim to URD}
A^1_{st} := X_{st}^i(x) \partial_i , \qquad A^2_{st} := (\nabla_x^{\otimes} \mathbb{X}_{st})^j(x,x) \partial_j + \mathbb{X}^{ij}_{st}(x,x) \partial_i \partial_j
\end{equation}
(for simplicity we leave out the multiplicative term $X^0$ of \eqref{B1_diff}) defines an unbounded rough driver on the scales defined in Example \ref{exa:scales}. In fact, we see that 
$$
\bb{X}{\nabla X}_{st} = \nabla_x^{\otimes} \mathbb{X}_{st}
$$ 
where $\bb{X}{\nabla X}$ is as in Lemma \ref{lem:geometric}.

Although the seemingly irrelevant second spatial variable, $y$, is not used to construct the unbounded rough driver in \eqref{eq:inf_dim to URD} it offers a satisfying infinite-dimensional results on the analogue of the notion of weakly geometric rough paths; we say that $(X,\mathbb{X})$ is weakly geometric if 
\begin{equation} \label{eq:tensor symmetry}
\mathbb{X}^{ij}_{st}(x,y) + \mathbb{X}^{ji}_{st}(y,x) = X_{st}^i(x) X_{st}^j(y).
\end{equation}
In \cite{GNS} it was proved that Hilbert-space valued rough paths satisfies the same approximation property as is expected from the finite-dimensional setting. The following result follows from \cite[Theorem 1.1]{GNS}.
\begin{proposition}
Assume $(X,\mathbb{X})$ is an $W^{k,2}$-valued $\alpha$-rough path for $k > \frac{d}{2} + 4$ and satisfies \eqref{eq:tensor symmetry}. Then there exists a sequence of paths $X(n) : [0,T] \times \R^d \rightarrow \R^d$ which are of bounded variation in $t$ and $W^{k,2}$ in $x$ such that the canonical rough path lift $(X(n), \mathbb{X}(n))$ converges to $(X,\mathbb{X})$ in the $C^3_b$-valued $\beta$-rough path metric for every $\beta \in (\frac13, \alpha)$. 
\end{proposition}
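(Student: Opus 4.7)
My approach is to apply the Hilbert-space approximation result \cite[Theorem 1.1]{GNS} in the separable Hilbert space $H:=W^{k,2}(\R^d;\R^d)$, and then transfer the resulting convergence to the $C^3_b$-valued rough path topology via Sobolev embedding. The first conceptual step is to recognise that the pointwise kernel symmetry \eqref{eq:tensor symmetry} is precisely the abstract weakly-geometric relation
\[
\mathbb{X}_{st}+\mathbb{X}_{st}^{\star}=X_{st}\otimes X_{st},
\]
on the $H\hat\otimes H$-valued second level, where the involution $\star$ swaps the two tensor factors and then transposes matrix indices. Granting this identification, the Hilbert-space approximation theorem furnishes a sequence $X(n):[0,T]\to H$ of paths of bounded variation whose canonical lifts $(X(n),\mathbb{X}(n))$ converge to $(X,\mathbb{X})$ in the $H$-valued $\beta$-rough path metric for every $\beta\in(\tfrac13,\alpha)$.

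Next I would upgrade this convergence from $H$- to $C^3_b$-valued. The hypothesis $k>\tfrac d2+4$ leaves room in the standard Sobolev embedding $W^{k,2}(\R^d)\hookrightarrow C^3_b(\R^d)$ (which only requires $k>\tfrac d2+3$), so that for the first level
\[
\|X(n)-X\|_{C^{\beta}([0,T];C^3_b)}\lesssim \|X(n)-X\|_{C^{\beta}([0,T];H)}\to 0.
\]
For the second level I would view $\mathbb{X}_{st}\in H\hat\otimes H$ as an element of $W^{k,2}(\R^d;W^{k,2}(\R^d;\R^{d\times d}))$ via the Hilbert-space kernel representation, and apply the Sobolev embedding iteratively in each $\R^d$ slot. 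The resulting embedding $H\hat\otimes H\hookrightarrow C^3_b(\R^d\times\R^d;\R^{d\times d})$ yields the required joint $C^3_b$-convergence of $\mathbb{X}(n)$ to $\mathbb{X}$ in the $2\beta$-Hölder-in-time topology, with the extra derivative afforded by $k>\tfrac d2+4$ absorbing the regularity loss incurred by the tensorial identification.

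The main obstacle, in my view, is the translation between the pointwise-in-$(x,y)$ symmetry \eqref{eq:tensor symmetry} and the abstract tensorial weak-geometricity condition that serves as the hypothesis of \cite[Theorem 1.1]{GNS}. One must verify that this symmetry is preserved under the Hilbert completion of the algebraic tensor product, and that Chen's relation stated kernel-wise lifts to its tensor-algebraic counterpart. Once these compatibility checks are performed, the remainder of the proof reduces to bookkeeping with the Sobolev embeddings described above and an appeal to the continuity of canonical lifts under the $H$-valued rough path metric.
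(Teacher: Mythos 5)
Your approach matches the paper's, which simply invokes \cite[Theorem~1.1]{GNS} in $H=W^{k,2}(\R^d;\R^d)$ and passes to the $C^3_b$-valued metric via Sobolev embedding (as indicated by the remark immediately following the proposition); the identification of \eqref{eq:tensor symmetry} with the abstract weak-geometricity relation on $H\hat\otimes H$ is exactly the intended reading. One small inaccuracy: the closing sentence, attributing $k>\tfrac d2+4$ to a ``regularity loss incurred by the tensorial identification,'' is off---the mixed Sobolev embedding $W^{k,2}\hat\otimes W^{k,2}\hookrightarrow C^3_b(\R^d\times\R^d)$ obtained by applying the scalar embedding in each slot already holds once $k>\tfrac d2+3$, with no extra derivative lost---but since $k>\tfrac d2+4$ is a stated hypothesis rather than a conclusion, this does not affect the correctness of the argument.
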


\begin{remark}
Note that in the above result (relatively) high spatial regularity is required to have continuous embedding into $C^3_b$ of the second component $\mathbb{X}$. It is clear that the construction \eqref{eq:inf_dim to URD} is continuous and as such, gives a geometric unbounded rough driver in the sense of \eqref{rho_alpha}, albeit with $\beta$ in place of $\alpha$. 
\end{remark}

\paragraph{Incompressible Navier-Stokes}
Here we introduce the URD corresponding to the \( d \)-dimensional (\( d=2,3 \)) rough Navier-Stokes equations (see Sec.~\ref{sec:NS} below). We assume that the velocity component is divergence free, which leads us to introduce the Leray projection \( P\colon \mathscr S'\to \mathscr S' \) onto divergence-free vector fields.
If we work with periodic distributions,
 the latter is defined by means of Fourier transform 
\( Pf = \sum_{k\in \mathbb Z^d}(\hat f_n-\frac{n\cdot \hat f_n}{n^2})e_n\) where \( e_n(x)=(2\pi)^{-\frac{d}{2}}e^{in\cdot x} \) and \( f_n=\langle f,e_n\rangle \).
The resulting equation forms a system on the pair \( (u, \pi)\) where \( \pi_t= \int _0^t\nabla p_rdr\) (here \( p \) is the related pressure term) and therefore we work with \( k=2d \).
In this setting, the case of transport noise may be described as follows:
we introduce
\[
\begin{aligned}
&A^{P,1}_{st}=Y_{st}^\mu [P \circ(\sigma_\mu\cdot \nabla)] ,
\quad \quad 
A^{P,2}_{st}=\mathbb Y_{st}^{\mu\nu} [P \circ(\sigma_\nu\cdot \nabla)\circ P \circ(\sigma_\mu\cdot \nabla)] ,
\\
&A^{Q,1}_{st}=Y_{st}^{\mu} [Q \circ(\sigma_\mu\cdot \nabla)]
\quad \quad 
A^{Q,2}_{st}=\mathbb Y_{st}^{\mu\nu} [Q \circ(\sigma_\nu\cdot \nabla)\circ P \circ(\sigma_\mu\cdot \nabla)] ,
\end{aligned}
\]
where \( Q=I-P \) and \( \mathbf Y =(Y,\mathbb Y)\) as in Example \ref{example:bracket}.
Next, we introduce the URD
\[
A^1_{st}= 
\begin{pmatrix}
	A^{P,1}_{st} & 0\\
	A^{Q,1}_{st} & 0
\end{pmatrix}
\quad \quad 
A^2_{st}=
\begin{pmatrix}
	A^{P,2}_{st} & 0\\
	A^{Q,2}_{st} & 0
\end{pmatrix}
\]
and we see from the immediate relation \( \delta A^{Q,2}_{s\theta t}=A^{Q,1}_{\theta t}A^{P,1}_{s\theta} \) that Chen's relations are indeed satisfied for \( \mathbf A, \) that is
\[
\delta A^2_{s\theta t}=
\begin{pmatrix}
	A^{P,1}_{\theta t}A^{P,1}_{s\theta} & 0\\
	A^{Q,1}_{\theta t}A^{P,1}_{s\theta} & 0
\end{pmatrix}
=\begin{pmatrix}
	A^{P,1}_{\theta t} & 0\\
	A^{Q,1}_{\theta t} & 0
\end{pmatrix}
\begin{pmatrix}
	A^{P,1}_{s\theta} & 0\\
	A^{Q,1}_{s\theta} & 0
\end{pmatrix}
=A^{1}_{\theta t}A^{1}_{s\theta}
\]
(the analytical condition \eqref{bounds:rough_drivers} is clear).

\subsubsection{Operator-valued paths of order zero}
\label{sec:anti}
We now give some comments on the case when \( \sigma=0 \) (i.e.\ the underlying path takes values in a subalgebra of bounded operators) and \( k\ge1 \) is arbitrary (as opposed to the previous section where \( \sigma=1 \)). 
More specifically, we suppose that a path \( X\colon [0,T]\to L(E) \) is given for each \( t\in [0,T] \), taking the form of a multiplication operator
\[
f\in E\subset \mathscr S'(\R^d,\R^k)
\mapsto 
[X_tf](x)= \sum_{1\le i,j\le k}X_t^{ij}(x)f^i(x)
\]
with coefficients \( X^{ij}_t(x) \) that are smooth with respect to \( x \) and \( \alpha \)-H\"older continuous in \( t \).

We still assume geometricity (in the sense given in Definition \ref{def:URD}): namely \( X \) can be obtained as the limit, for the inherited $C^\alpha(L(E))\times C_{2}^{2\alpha}(L(E))$ topology, of a sequence of smooth rough drivers \( \X (n)=(X^1(n),X^2(n))\), defined by canonical enhancement of \( X(n) \).

We record a simple lemma which is used in \cite{gussetti2023pathwise} about the antisymmetric case.
		Herein we let $L_a(\R^k)\subset L(\R^k)$ be the space of linear maps $T\colon \R^k\to \R^k$ such that $T v\cdot w = -v\cdot(T w)$ $\forall v,w\in\R^k$.

\begin{lemma}
	\label{lem:antisymmetric}
Suppose that \( X_t(x) \) takes values in \( L_a(\mathbb R^k) \).
		There exists family of coefficients 
		\[
 \LL=(\LL_{st}(x))\in C^{2\alpha}_2 \left ([0,T];C^\infty_b(\R^d,L_a(\R^k))\right )
\]
 such that
		\begin{equation}\label{levy}
		X^2_{st}(x) = \frac12(X^1_{st}(x))^2 +\LL_{st}(x)\,.
		\end{equation}
\end{lemma}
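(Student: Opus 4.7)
The plan is to exploit geometricity in order to reduce the claim to a pointwise matrix identity for smooth approximations, and then pass to the limit. By geometricity, I take a sequence $X(n)\to X$ of smooth paths whose canonical lift $\X(n)=(X^{1}(n),X^{2}(n))$ converges to $\X$ in the rough driver topology \eqref{rho_alpha}. Since $L_a(\R^k)$ is a linear subspace of matrices, I may further arrange $X_t(n)(x)\in L_a(\R^k)$ for all $t,x,n$ (e.g.\ by mollifying only in the time variable, which preserves the constraint and the smoothness in $x$). Then $X^{1}_{st}(n)$ is the multiplication operator by the antisymmetric matrix $X_t(n)(x)-X_s(n)(x)$, and the canonical second level $X^{2}_{st}(n)=\int_s^t \dot X_r(n)\bigl(X_r(n)-X_s(n)\bigr)dr$ is itself a multiplication operator by a smooth matrix-valued function.

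At the pointwise matrix level, with $Y_r:=X_r(n)(x)$, the identity $\frac{d}{dr}(Y_r^{2})=\dot Y_r Y_r+Y_r\dot Y_r$ gives $\int_s^t \dot Y_r Y_r\,dr=\tfrac12(Y_t^2-Y_s^2)+\tfrac12\int_s^t[\dot Y_r,Y_r]\,dr$. A direct algebraic manipulation then yields
\[
X^{2}_{st}(n)(x)-\tfrac12\bigl(X^{1}_{st}(n)(x)\bigr)^2
\;=\;\tfrac12[Y_s,Y_t]\;+\;\tfrac12\int_s^t[\dot Y_r,Y_r]\,dr
\;=:\;\LL^{(n)}_{st}(x).
\]
The key algebraic observation is that the Lie bracket of two antisymmetric matrices is antisymmetric: for $A^{\top}=-A$, $B^{\top}=-B$, one has $(AB-BA)^{\top}=B^{\top}A^{\top}-A^{\top}B^{\top}=BA-AB=-[A,B]$. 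Hence $\LL^{(n)}_{st}(x)\in L_a(\R^k)$ for every $(s,t,x)$. Moreover $\LL^{(n)}_{st}(\cdot)$ is smooth in $x$ with $C^{\infty}_b$-bounds uniform in $n$, inherited from the smoothness of $X$ and from the choice of approximants.

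To conclude I pass to the limit. By \eqref{rho_alpha} with $\sigma=0$, the operators $X^{2}_{st}(n)-\tfrac12(X^{1}_{st}(n))^{2}$ converge in operator norm in each $L(E_{\beta},E_{\beta})$. Since each such operator is multiplication by the uniformly-smooth antisymmetric matrix $\LL^{(n)}_{st}(\cdot)$, a standard closure argument for multiplication operators on the scale $(E_\beta)$ identifies the limit as multiplication by some $\LL_{st}(\cdot)\in C^{\infty}_b(\R^d;L_a(\R^k))$, yielding the desired identity \eqref{levy}. The $C^{2\alpha}_2$-regularity in $(s,t)$ of $\LL$ then follows from the corresponding regularity of $X^{2}$ and of $(X^{1})^{2}$ (since $X^{1}\in C^{\alpha}_2$). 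The only step that requires genuine care is this last identification: one must transfer operator-norm convergence on $(E_\beta)$ into $C^{\infty}_b$-convergence of the coefficients. This is, however, routine on Besov or Sobolev scales, where the multiplier norm of a smooth coefficient is equivalent, for $\beta$ large enough, to a smooth norm of the coefficient itself; the remainder is bookkeeping.
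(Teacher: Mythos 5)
Your proof is correct and takes essentially the same approach as the paper: derive the identity for smooth (finite-variation) antisymmetric approximants and pass to the limit using geometricity. The paper's execution is slightly slicker — it applies the matrix adjoint $\star$ directly to $X^2_{st}$, uses integration by parts to conclude that the symmetric part $\frac12(X^2_{st}+X^{2,\star}_{st})$ equals $\frac12(\delta X_{st})^2$ (a closed identity in the rough-path data, hence trivially stable under limits), and reads off $\LL_{st}=\frac12(X^2_{st}-X^{2,\star}_{st})$ as manifestly antisymmetric — whereas you derive the explicit commutator formula $\LL^{(n)}_{st}=\frac12[Y_s,Y_t]+\frac12\int_s^t[\dot Y_r,Y_r]\,dr$ and invoke the closedness of antisymmetric matrices under Lie brackets; the two arguments encode the same algebraic fact.
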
		

\begin{proof}
Assuming that $X^{1}$ has finite variation,
		an easy integration by parts argument over $r\in[s,t]$, using the antisymmetry of $X_t$, implies 
		\[
		X^{2,\star}_{st}=\left (\int_s^t dX_{r}\delta X_{s,r}\right )^{\star}=
		\int_s^t \delta X_{sr}dX_{r} = \int _s ^tdX_r\delta X_{rt}\,.
		\]
		Summing, we obtain that the symmetric part of $X^2_{st}$ is determined by the first level $X^1_{st}=\delta X_{st}$ through
		\[
		\frac12\left (X^2_{st}+ X^{2,\star}_{st}\right ) = \frac12(\delta X_{st})^2\,.
		\]
		Since geometric rough drivers are obtained as limits of such finite-variation lifts, this entails a similar identity at the limit, hence our conclusion.
\end{proof}		

		In the particular  case \( k=3 \) one sees that $\LL_{st}$ is in fact the L\'evy area of $\X$, formally:
		\begin{equation}
		\label{bracket}
		\begin{aligned}
		\LL_{st} =
		\iint\limits_{s<r_1<r<t}
		\frac12
		\begin{pmatrix}
		0 &
		d X_{r}^2 d X_{r_1}^1 -  d X_{r}^1 d X_{r_1}^2 &
		d X_{r}^3 d X_{r_1}^1- d X_{r}^1 d X_{r_1}^3
		\\
		d X_{r}^1 d X_{r_1}^2 - d X_{r}^2 d X_{r_1}^1 &
		0 &
		d X_{r}^3 d X_{r_1}^2- d X_{r}^3 d X_{r_1}^2
		\\
		d X_{r}^1 d X_{r_1}^3 - d X_{r}^3 d X_{r_1}^1&
		d X_{r}^2 d X_{r_1}^3 - d X_{r}^3 d X_{r_1}^2&
		0
		\end{pmatrix}.
		\end{aligned}
		\end{equation}

\section{A priori estimates}
We explain how to obtain two types of a priori estimates for rough PDEs of the form
\begin{equation}
\label{general_RPDE}
d v_t = G_t(x,\nabla v_t,\nabla ^2v_t) d t + (d\X_t \cdot \nabla  + d\X_t^0)v_t 
\quad\text{on }[0,T]\,.
\end{equation}
where the unknown \( v\colon [0,T]\to E_0 \) is bounded and \( G \) is a reasonable nonlinearity (see below).
The first type of estimates takes the form of various \( p \)-variation bounds that apply to remainder terms in the corresponding Euler-Taylor expansion. 
It only makes use of the linear structure of the rough term and it should be highlighted that few properties on the drift coefficient are needed (see the first paragraph below).
The second part shows how to start from the RPDE \eqref{general_RPDE} and obtain estimates for the Sobolev type norms \( \|v\|_{L^2(0,T;W^{1,2}) \cap L^{\infty}(0,T;L^2)}.\) The latter requires ellipticity of \( G \), even though it is possible to generalize the argument to the (transport) case when \( G \) is independent of \( \nabla^2v \) (in which case one obtains an estimate on \( \|v\|_{L^{\infty}(0,T;L^2)} \)).

\subsection{Controlled rough paths estimates}
We are now interested in remainder estimates for equations of the form
\begin{equation}
\label{ansatz_Q}
d v = F d t + d \A v\,,
\quad\text{on }[0,T]
\end{equation} 
for a distributional drift $F$ that may depend on \( v \). 

We assume furthermore that \( \|v\|_{L^\infty(0,T;E_{0})}=\sup_{t\in [0,T]}|v_t|_0 <\infty \) and that \( \A \) is an URD as in Definition \ref{def:URD}, where for convenience we suppose
that \( \sigma=1 \). 
The way \eqref{ansatz_Q} is understood is via a suitable Euler-Taylor expansion.

\begin{definition}[Solution]
\label{def:diff_notation}
A path $v:[0,T]\to E_{0}$ is called a \textit{solution} of \eqref{ansatz_Q} provided:
\begin{itemize}
\item 
\( F\) belongs to  \( L^1(0,T;E_{-2})\);
\item 
The following expansion holds as an equality in $E_{-2}$ for every $(s,t)\in\Delta $: 
\begin{equation}
\label{euler-taylor_Q}
\delta v_{st} =\int_s^tF_rd r + (A^{1}_{st} + A^2_{st})v_s+ v^\natural_{st}\,,
\end{equation}
In the above formula, the first integral is a Bochner one, 
and the remainder \( v^\natural \) (implicitly defined via \eqref{euler-taylor_Q}) 
has the property that $(s,t)\mapsto|v^\natural_{st}|_{-3}$ has finite \( q \)-variation for some \( q<1 \).
\end{itemize}
Moreover, we say that $u$ starts at \( u^0\in E_0 \) if
\begin{itemize}
	\item 
	 with probability one, \( \lim _{t\downarrow0}u_t \) exists weakly in \( E_0 \) and equals \( u^0 \).
\end{itemize}
\end{definition}

The next statement is a generalization of a result that appears in various forms in the literature (see for instance \cite{deya2019priori,hocquet2018energy,hofmanova2019navier,hocquet2020ito}).
It says that given the ansatz \eqref{ansatz_Q}, we can estimate the \( p \)-variation norm  for \( p\in(1, \frac{1}{3\alpha}] \) of the remainder \( v^{\natural} \) in terms of the corresponding quantities for \( F \) and \( \A \). 

\begin{proposition}[Remainder estimates]
	\label{pro:apriori}
	Let $v\in L^\infty(0,T;E_0)$ be a solution of \eqref{ansatz_Q}, in the sense of the Euler-Taylor expansion \eqref{euler-taylor_Q},
	for some drift coefficient $F\in L^1(0,T;E_{-2})$ and let 
	\[
	w_F(s,t):=
	\int_s^t|F_{r}|_{-2} d r\,.
	\]

	There are positive constants $C,L$ depending only on \( \alpha ,[\A]_\alpha ,\|F\|_{L^1(E_{-2})} \)
	such that for each $(s,t)\in\Delta $ subject to	$|t-s|\leq L,$
	it holds 
	\begin{equation}
	\label{estimate_remainder}
	|v^{\natural}_{st}|_{-3}
	\leq C \left((t-s)^{3\alpha }\| v\|_{L^\infty(s,t;E_0)}
	+(t-s)^\alpha w_F(s,t)\right)\,,
	\end{equation}
	and
	\begin{equation}
	\label{estimate_remainder_2}
	|R^v_{st}|_{-2}+|v^{\natural}_{st}|_{-2}
	\leq C \left((t-s)^{2\alpha }\| v\|_{L^\infty(s,t;E_0)}
	+w_F(s,t)\right).
	\end{equation}
where \[R^v_{st}:= \delta v_{st}-A^1_{st}v_s\,.\]
Moreover, we have the following estimate on the increment for \( 0\le t-s\le L \):
	\begin{equation}
	\label{estimate_remainder_3}
	|\delta v_{st}|_{-1}\leq C \left(w_F(s,t)^{\alpha } + (t-s)^\alpha \|v\|_{L^\infty(s,t;E_0)}\right)\,.
	\end{equation}
\end{proposition}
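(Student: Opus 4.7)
The cornerstone of the proof is an algebraic identity for the three-index increment of $v^\natural$. Writing the Euler--Taylor expansion \eqref{euler-taylor_Q} at each of the pairs $(s,\theta),(\theta,t),(s,t)$, subtracting, and using Chen's relations \eqref{chen} to collapse the $A^{1}$- and $A^{2}$-contributions, one obtains, as an identity in $E_{-3}$,
\[
\delta v^\natural_{s\theta t}\;=\;A^1_{\theta t}\,R^v_{s\theta}\;+\;A^2_{\theta t}\,\delta v_{s\theta},
\qquad (s,\theta,t)\in\Delta_2.
\]
This identity is the input to a sewing-type argument, which is applicable thanks to the a priori finiteness of the $q$-variation of $v^\natural$ for some $q<1$ granted by Definition~\ref{def:diff_notation}.

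To prove \eqref{estimate_remainder}, I estimate this identity in $E_{-3}$ using the mapping properties $A^1_{\theta t}\in L(E_{-2},E_{-3})$ and $A^2_{\theta t}\in L(E_{-1},E_{-3})$ from \eqref{bounds:rough_drivers}. After inserting the expansion $R^v_{s\theta}=v^\natural_{s\theta}+A^2_{s\theta}v_s+\int_s^\theta F\,dr$ and the trivial embedding $E_0\hookrightarrow E_{-1}$ for $\delta v$, I obtain the super-additive control
\[
|\delta v^\natural_{s\theta t}|_{-3}\;\lesssim\;(t-s)^{3\alpha}\|v\|_{L^\infty(s,t;E_0)}\;+\;(t-s)^\alpha w_F(s,t)\;+\;(t-\theta)^\alpha\,|v^\natural_{s\theta}|_{-2}.
\]
The first two pieces have Hölder exponents $3\alpha>1$ or are compatible with the control $w_F$, and the sewing lemma turns them into the bound \eqref{estimate_remainder}. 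The last piece is a self-coupling with $v^\natural$ one level higher in the scale; it is absorbed on a small interval $[s,t]$ of length $L$ chosen so that the prefactor $cL^\alpha[\A]_\alpha$ is less than $1/2$, which is where the dependence of $L$ on $\alpha,[\A]_\alpha,\|F\|_{L^1(E_{-2})}$ enters.

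The estimate \eqref{estimate_remainder_2} is obtained by repeating the same strategy one scale higher, now estimating the identity in $E_{-2}$ via $A^1_{\theta t}\in L(E_{-1},E_{-2})$ and $A^2_{\theta t}\in L(E_0,E_{-2})$. The a priori inputs needed, namely $|\delta v|_0\leq 2\|v\|_{L^\infty(E_0)}$ and $|R^v|_{-1}\lesssim \|v\|_{L^\infty(E_0)}(1+(t-s)^\alpha[\A]_\alpha)$, are both immediate from the assumption $v\in L^\infty(E_0)$. The sewing yields the bound on $|v^\natural_{st}|_{-2}$, and the corresponding estimate on $|R^v_{st}|_{-2}$ follows by reading off $R^v_{st}=A^2_{st}v_s+\int_s^tF\,dr+v^\natural_{st}$. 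The two sewing procedures (in $E_{-3}$ and $E_{-2}$) must be run in tandem on the same small interval of length $L$, so as to simultaneously close the self-coupling term of the $E_{-3}$ argument.

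Finally, \eqref{estimate_remainder_3} follows by splitting $\delta v_{st}=R^v_{st}+A^1_{st}v_s$, bounding $|A^1_{st}v_s|_{-1}\leq (t-s)^\alpha[\A]_\alpha\|v\|_{L^\infty(E_0)}$ directly, and estimating $|R^v_{st}|_{-1}$ by interpolation via Assumption~\ref{ass:intermediate}, applied between the $E_{-2}$ bound from \eqref{estimate_remainder_2} and the trivial $L^\infty(E_0)$-bound; optimising the parameter $\eta>0$ produces the fractional-power contribution $w_F(s,t)^\alpha$. The main obstacle throughout is the self-referential coupling in the $E_{-3}$ sewing, whose resolution requires the coordinated choice of a single small time-step $L$; this is the standard \emph{rough Gronwall} mechanism, and its quantitative bookkeeping (matching constants across scales so that the dependencies in $L$ are those claimed) is the most technically demanding ingredient.
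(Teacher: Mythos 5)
Your starting point is correct: the Chen-type identity $\delta v^\natural_{s\theta t}=A^1_{\theta t}R^v_{s\theta}+A^2_{\theta t}\delta v_{s\theta}$ is indeed the engine of the proof. However, the claimed ``super-additive control'' is wrong, and the gap is structural.

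Look at the term $A^2_{\theta t}\delta v_{s\theta}$. In $E_{-3}$ it is bounded by $(t-\theta)^{2\alpha}\,|\delta v_{s\theta}|_{-1}$, and with the trivial embedding $E_0\hookrightarrow E_{-1}$ one has $|\delta v_{s\theta}|_{-1}\le 2\|v\|_{L^\infty(E_0)}$. This produces a contribution of order $(t-\theta)^{2\alpha}\|v\|$, not $(t-s)^{3\alpha}\|v\|$ as you wrote. Since $\alpha\le\tfrac12$, the exponent $2\alpha\le 1$, so this term cannot be written as a control raised to a power $>1$, and the Sewing Lemma (Theorem~\ref{thm:sewing}) simply does not apply. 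The same obstruction appears one scale higher: in the $E_{-2}$ sewing, the term $A^1_{\theta t}R^v_{s\theta}$ is only bounded by $(t-\theta)^\alpha|R^v_{s\theta}|_{-1}\lesssim(t-\theta)^\alpha\|v\|$ with your inputs, and $\alpha<1$ kills the sewing before it starts. Your small-step-$L$ mechanism cannot rescue these contributions because they are linear in $\|v\|_{L^\infty(E_0)}$, not self-referential in $v^\natural$, hence cannot be absorbed.

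What is missing is precisely the interpolation ingredient, Assumption~\ref{ass:intermediate}. The paper's proof does not estimate the two levels by ``repeating the same strategy one scale higher''; rather, it uses the fact that $R^v_{st}$ admits two representations --- as $\delta v_{st}-A^1_{st}v_s$ (good in $E_{-1}$/$E_0$) and as $\int_s^t F\,dr+A^2_{st}v_s+v^\natural_{st}$ (good in $E_{-2}$/$E_{-3}$) --- and interpolates between them at an intermediate level with a free parameter $\eta$ (see \eqref{Rv_estim}). Choosing $\eta=\zeta(t-s)^\alpha$ upgrades the na\"ive $O(1)$ bounds on $|R^v|_{-2}$ and $|\delta v|_{-1}$ to bounds that carry an extra factor of $(t-s)^\alpha$, which is exactly what restores the total exponent to $3\alpha>1$ and lets the sewing argument close in tandem with a $p$-variation bootstrap. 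Your use of interpolation only at the very end (for \eqref{estimate_remainder_3}) is consistent in spirit, but it needs to be deployed at the core of the argument, not as an afterthought. Without it, the estimates \eqref{estimate_remainder} and \eqref{estimate_remainder_2} do not follow.
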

Summing the above estimates for different time-intervals, one can easily infer some \( p \)-variation estimates for \( \delta v ,R^v, v^{\natural}\)  (we omit the details).
Let \( \beta\in \R \) and assume for notational convenience that \( p=\frac1\alpha\in [2,3)\). 
Recall that the \( p \)-variation of a \( 2 \)-index map \( z=(z_{st})\colon \Delta\to E_\beta \) on \( [u,v]\subset [0,T] \) is  defined as the quantity
\[
\|z\|_{\alpha,\beta;[u,v]}:= \left (\sup_{\pi}\sum_{[t_i,t_{i+1}]\in\pi} |z_{t_{i},t_{i+1}}|_{\beta}^{\frac1\alpha}\right )^{\alpha}
\]
where the supremum is taken over all  finite partitions \( \pi=\{[t_0,t_1],[t_1,t_2],\dots ,[t_n,t_{n+1}]\} \) such that \( t_0=u \) and \( t_{n+1}=v\).
\begin{proof}
	Note the identity
	\[
	R^v_{st} := \delta v_{st} - A^1_{st}v_{s}= \int_s^tF_r d r + A^2_{st}v_{s} + v_{st}^\natural\,.
	\]
	Using Assumption \ref{ass:intermediate}, we can interpolate these two different expressions for $R^v,$ by writing
	\begin{equation}
	\label{Rv_estim}
	\begin{aligned}
	|R^v_{st}|_{-2}
	&\leq |(\int_s^t F_r d r  + A_{st}^2v_s + v_{st}^\natural)^{\eta}|_{-2} + |[\delta v_{st}-A^1_{st}v_s]^{\sim,\eta}|_{-2}
	\\
	&\lesssim |\int_s^tF_rd r|_{-2} + |A^2_{st}v|_{-2} + \frac{|v^\natural_{st}|_{-3}}{\eta}
	\\
	&\quad \quad \quad \quad \quad 
	+\eta ^22\|v\|_{L^\infty(E_0)} +\eta (t-s)^\alpha  \|v\|_{L^\infty(E_0)}.
	\end{aligned}
	\end{equation} 
	
	In order to estimate $v^\natural,$ note that Chen's relations \eqref{chen} imply
	\[
	\delta v^{\natural}_{s\theta t}
	=A^1_{\theta t}R^v_{s\theta } + A^2_{\theta t}\delta v_{s\theta }\,,\quad \text{for}\enskip (s,\theta ,t)\in\Delta _2\,.
	\]
	From this and the sewing lemma (Theorem \ref{thm:sewing}), we infer that
	\begin{equation}
	\label{pre:estim:v_nat}
	|v^\natural_{st}|_{-3}\les_{\alpha,[\A]_\alpha}
\left((t-s)^{\alpha }\|R^v\|_{2\alpha,-2;[s,t]}^{2\alpha} + (t-s)^{2\alpha }\|\delta v\|_{\alpha,-1;[s,t]}^{\alpha}\right) \,.
	\end{equation}
	
	Now, since \eqref{Rv_estim} is true for arbitrary $\eta \in(0,1),$ we can choose $\eta :=\zeta (t-s)^\alpha $ for some $\zeta >0$ big enough. We obtain from \eqref{pre:estim:v_nat}:
	\[
	\begin{aligned}
	|R^v_{st}|_{-2}
	&\les \left(\int_s^t|F_r|_{-2}d r\right) 
	+ (t-s)^{2\alpha }\|v\|_{L^\infty(s,t;E_0)}
	+\frac{\|v^\natural\|_{3\alpha,-3;[s,t]}^{3\alpha}}{\zeta (t-s)^\alpha }
	\\
	&\leq C\left(w_F(s,t)
	+ (t-s)^{2\alpha }\|v\|_{L^\infty(s,t;E_0)}\right) 
	\\
	&\quad \quad \quad \quad \quad 
	+ \frac12\left( \|R^v\|_{2\alpha,-2;[s,t]}^{2\alpha} +(t-s)^{2\alpha } \|\delta v\|_{\alpha,-1;[s,t]}^\alpha \right),
	\end{aligned}
	\]
	provided that $(t-s)\leq L(\alpha ).$
	Using that the right hand side is a control, we infer from simple facts about \( p \)-variation estimates that
	\[
	\|R^v\|_{\alpha,-2;[s,t]} \lesssim
w_F(s,t)
	+ (t-s)^{2\alpha }\|v\|_{L^\infty(s,t;E_0)}
	+(t-s)^{2\alpha } \|\delta v\|_{\alpha,-1;[s,t]}^\alpha \,.
	\]

Next, similar considerations lead to 
\[
	\|\delta v\|_{\alpha,-1;[s,t]} \lesssim
w_F(s,t)^\alpha
	+ (t-s)^{\alpha }\|v\|_{L^\infty(s,t;E_0)}
	+(t-s)^\alpha  \|R^v\|_{2\alpha,-2;[s,t]}^{2\alpha }\,.
	\]
We can  then close the argument by looking again at \eqref{pre:estim:v_nat}: from the two previous estimate, we infer that \eqref{estimate_remainder} holds. The rest of the statement is immediate.
\end{proof}

\subsection{Behavior with respect to products}

In what follows, we consider a fixed open set
$
U\subset \R^d
$
and work for convenience with the Sobolev scale \( E_{\beta}=W^{\beta,p}(U) \) 
for some \( p\in[1,\infty] \) (see Example \ref{exa:scales}).
Suppose that we are given two bounded paths \( t\mapsto u_t\) and \( t\mapsto v_t \), from \([0,T]\to L^p(U) \) and \([0,T]\to L^{p'} (U)\)  respectively (with $1/p+1/p'=1$). Assume that both paths satisfy an equation of the form \eqref{ansatz_Q} with the same URD \( \A \) but with possibly different drift terms.
If the regularity conditions of Proposition \ref{pro:apriori} are satisfied for both paths,
it is then natural to expect that the pointwise product $uv$ will be still described by a relation of the form \eqref{ansatz_Q}. However, unless the rough term only contains transport terms, the zeroth order of the corresponding URD must be shifted from a factor \( 2 \).
This is described in the next proposition which establishes a product formula in the case of differential URDs of order \( \le 1 \).
Note that analogous formulas may be derived for systems i.e.\ when \( k\ne1 \) (see, e.g., \cite{gussetti2023pathwise}).
 
\begin{proposition}[Product formula, the case of differential operators]
	\label{pro:product}
	Let $(X^{i},\bb{X}{\nabla X^i})_{i=0,\dots,d}$ %
	be the coefficients of a geometric URD as in Sec.~\ref{sec:diff}.
	Fix $p,p'\in [1,\infty]$ with $1/p+1/p'=1,$ and consider two bounded paths as above 
	such that
	\[
\begin{aligned}
	& d u= F d t +  (d\X\cdot\nabla + d\X^0)  u,
	\\
	& d v=G  d t + (d\X\cdot\nabla + d\X^0) v,
	\end{aligned}
	\]
	for some $F\in L^p(0,T;W^{-1,p}(U))$ and \( G\in L^{p'}(0,T;W^{-1,p'}(U)) \).

Suppose further that
	\[u\in L^p(0,T;W^{1,p}(U)),\quad 
	  v\in L^{p'}(0,T;W^{1,p'}(U))
	\]
Then,  $uv$ is subject to the regularity conditions on Proposition \ref{pro:apriori} (with \( p=1 \) and \( E_\beta=W^{\beta,1} \) therein).
		It is a solution 
		of
		\begin{equation}
		\label{concl:prod}
		 d (uv)= \big[uG +Fv\big] d t +  (d\X \cdot\nabla + 2d\X^0)(uv)\,.
		\end{equation}
	Here the second component of
\( \tilde A^1_{st}=X^i_{st}\partial_i+2X^0_{st} \) is understood as
\[
\tilde A^{2}_{st}
:=X^i_{st}X^j_{st}\partial_{ij}+ \bb{X}{\nabla X^i}_{st}\partial_i + 2X^0_{st}X^i_{st}\partial _i+2\bb{X}{\nabla X^0}_{st}+2(X^0_{st})^2 \,,
\]
which makes it a geometric URD.	
\end{proposition}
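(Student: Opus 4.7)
The plan is to expand the product via
\[
\delta(uv)_{st}=(\delta u_{st})v_s+u_s(\delta v_{st})+(\delta u_{st})(\delta v_{st}),
\]
substitute the Euler--Taylor expansions~\eqref{euler-taylor_Q} of $u$ and $v$ on the right, and then read off the candidate drift, first and second levels, and new remainder in~\eqref{concl:prod}. The drift of $uv$ is produced by $(\int_s^tF_r\,dr)v_s+u_s(\int_s^tG_r\,dr)$; replacing $u_s,v_s$ by $u_r,v_r$ inside the integral introduces correction terms of combined regularity $1+\alpha>1$ which will be absorbed into the new remainder. The first-level pieces combine by the Leibniz rule to
\[
(A^1_{st}u_s)v_s+u_s(A^1_{st}v_s)=X^i_{st}\partial_i(u_sv_s)+2X^0_{st}u_sv_s=\tilde A^1_{st}(u_sv_s),
\]
matching the claimed formula.

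The algebraic core of the argument is the identity
\[
(A^2_{st}u_s)v_s+u_s(A^2_{st}v_s)+(A^1_{st}u_s)(A^1_{st}v_s)=\tilde A^2_{st}(u_sv_s),
\]
which I would verify by inserting~\eqref{weak_geo} on the left and the stated formula for $\tilde A^2$ on the right. The two terms $\tfrac12X^i_{st}X^j_{st}(\partial_{ij}u_s)v_s$ and $\tfrac12X^i_{st}X^j_{st}u_s\partial_{ij}v_s$ from $A^2$ combine with the cross-product $X^i_{st}X^j_{st}\partial_iu_s\partial_jv_s$ arising in $(A^1u)(A^1v)$ to reconstruct the Leibniz expansion of $X^i_{st}X^j_{st}\partial_{ij}(u_sv_s)$ (using the symmetry of $X^iX^j$ in the index pair $(i,j)$); the affine pieces in $X^0$ and the brackets $\bb{X}{\nabla X^i}$, $\bb{X}{\nabla X^0}$ collect analogously into the remaining coefficients of $\tilde A^2$. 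The Chen relation $\delta\tilde A^2_{s\theta t}=\tilde A^1_{\theta t}\tilde A^1_{s\theta}$ then reduces to the elementary scalar identities for $\delta(X^iX^j)_{s\theta t}$, $\delta(X^0X^i)_{s\theta t}$, $\delta((X^0)^2)_{s\theta t}$ together with~\eqref{chen_L} for the two brackets, and geometricity of $\tilde{\mathbf A}$ is inherited from that of $\mathbf A$ by passing to the limit in the canonical enhancements of Lemma~\ref{lem:geometric}.

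It remains to check that the new remainder $(uv)^\natural_{st}$, collecting everything not identified above, has the $q$-variation regularity ($q<1$) required by Definition~\ref{def:diff_notation} in the scale $W^{\beta,1}$. Each of its pieces falls into one of three types: a product of a rough remainder $u^\natural$ or $v^\natural$ with a smoother increment of the other path; a product of two first- or second-level increments (such as $A^1_{st}u_s\cdot v^\natural_{st}$ or $A^2_{st}u_s\cdot\delta v_{st}$); or a product of a Bochner drift with a rough increment. Proposition~\ref{pro:apriori} applied separately to $u$ in the scale $W^{\cdot,p}$ and to $v$ in $W^{\cdot,p'}$ controls each such factor in its respective $p$-variation, and H\"older's inequality in the duality pairing between $W^{\beta,p}$ and $W^{-\beta,p'}$ sends every resulting product into $W^{-3,1}$ with a combined regularity exponent strictly greater than $1$ (since $\alpha>1/3$ and the critical cross terms have exponent $3\alpha$ or $2\alpha+\alpha$). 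This bookkeeping, namely checking that every cross term lives in the right duality scale and that its combined exponent exceeds one, is the main obstacle; once it is done, the identification of the coefficients of $\tilde{\mathbf A}$ and of the drift in~\eqref{concl:prod} is purely algebraic.
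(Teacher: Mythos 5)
Your overall strategy---expand $\delta(uv)_{st}=(\delta u_{st})v_s+u_s(\delta v_{st})+(\delta u_{st})(\delta v_{st})$, substitute the Euler--Taylor expansions and read off $\tilde A^1$, $\tilde A^2$ and the remainder---is the natural one, and the identification of the first-level operator and of the drift is correct.

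Two issues, one minor and one substantial.

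\emph{Minor (algebra).} Your claimed identity for the second-order piece is off by a factor of $2$: from \eqref{weak_geo} one gets
\[
\tfrac12 X^i X^j\bigl[(\partial_{ij}u)v+u\,\partial_{ij}v\bigr]+X^iX^j\,\partial_i u\,\partial_j v
=\tfrac12\,X^iX^j\partial_{ij}(uv),
\]
not $X^iX^j\partial_{ij}(uv)$, since $X^iX^j\partial_{ij}(uv)=X^iX^j[(\partial_{ij}u)v+u\,\partial_{ij}v]+2X^iX^j\,\partial_i u\,\partial_j v$. The coefficient $\tfrac12 X^iX^j$ is also what the canonical enhancement of $\tilde A^1=X^i\partial_i+2X^0$ produces, which is internally consistent with the claim that $\tilde{\mathbf A}$ is geometric; you should trust your own computation here rather than the displayed formula.

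\emph{Substantial (analysis).} The real difficulty, which you flag as ``the main obstacle'' and then dismiss as bookkeeping, is in fact a genuine gap. Expanding $(\delta u_{st})(\delta v_{st})$ by substituting the Euler--Taylor expansions of $u$ and $v$ produces terms such as $A^2_{st}u_s\cdot v^\natural_{st}$, $A^2_{st}u_s\cdot A^2_{st}v_s$ and $u^\natural_{st}\cdot v^\natural_{st}$. These are \emph{pointwise products of distributions of negative Sobolev regularity}: $A^2_{st}u_s\in W^{-1,p}$ and $A^2_{st}v_s\in W^{-1,p'}$ (since $u_s\in W^{1,p}$, $v_s\in W^{1,p'}$), while $v^\natural_{st}\in W^{-3,p'}$. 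The duality pairing between $W^{\beta,p}$ and $W^{-\beta,p'}$ gives a scalar, not a product distribution in $W^{-3,1}$; it does not let you multiply two objects both of negative regularity. So the individual cross terms are simply not well-defined and the ``bookkeeping'' cannot even get started. The way this is handled in the literature (e.g.\ \cite{hocquet2018energy,hocquet2020ito,deya2019priori}) is the tensorization (doubling-of-variables) argument: one first writes the equation satisfied by $U(x,y)=u(x)v(y)$ on $\mathbb R^{2d}$, driven by the tensorized driver $A\otimes I + I\otimes A$ whose two pieces act on disjoint variables so that no ill-defined products occur, applies the remainder estimate of Proposition~\ref{pro:apriori} at that level, and only then restricts to the diagonal $x=y$ via mollification, controlling the commutators between the smoothing operators and $A^1,A^2$. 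Without this (or an equivalent mollification/commutator step) your proof does not go through.
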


\subsection{The main Sobolev (energy) estimates}
\label{sec:monotone}

Suppose that we are given a bounded path \( u \colon [0,T]\to L^2(\R^d)\)
as in \eqref{ansatz_Q} with \( F=-G(u) \), that is
\begin{equation}
\label{monotone_eq}
\left \{\begin{aligned}
	d u= -G(u) d t +  (d\X\cdot\nabla + d\X^0)  u
	\\
	u_0\in L^2(\R^d),
\end{aligned}\right .
\end{equation}
for a drift term \( G(u) \) subject to the monotonicity condition
\begin{equation}
	\label{monotone}
	 \langle G(u),u\rangle \ge0,\quad \quad \forall u\in W^{1,2}(\R^d).
\end{equation}
In a classical variational setting where \( t\mapsto X_t \) has finite variation and \eqref{monotone_eq} is understood weakly (i.e.\ via test functions), the assumption \eqref{monotone} can then be exploited to obtain a priori estimates. In that case, one simply needs to justify that the space of admissible test functions can be extended so as to include the solution \( u \) itself. Testing \eqref{monotone_eq} against \( u \) will then result in a differential relation satisfied by \(t\mapsto|u_t|_{L^2}^2 \), which, along with Gronwall Lemma, will yield the main energy estimate:
\begin{equation}
	\label{energy_bound}
	\sup_{t\in [0,T]}|u_t|_{L^2}^2 + 2\int_{0}^{T}\langle G(u_t),u_t\rangle dt \le C|u_0|_{L^2}^2
\end{equation}
for some constant \( C\ge0 \) depending on the \( 1 \)-variation of \( X .\)

\subsubsection*{Strategy in the rough setting}
In the URD context, the correct analogue of ``testing the equation against the solution'' is not so straightforward. 
The starting point is the product formula from Proposition \ref{pro:product} which asserts in particular for \( p=2 ,u=v\) (assuming the ansatz \( F=-G(u)\)), that
\begin{equation}
	\label{eq:u2}
	d (u^2)= -2uG(u)d t +  (d\X \cdot\nabla + 2d\X^0)(u^2) \,.
\end{equation}
The latter is in fact (still) an equation, with the unknown \( v=u^2 \) being bounded as a path from \( [0,T]\to L^1(\R^d) \). Accordingly, the additional information \eqref{monotone} will only be helpful once we integrate that equation -- or what is the same -- if we test it against the constant \( \phi\equiv1 \). Precisely, from \eqref{euler-taylor_Q} we find that 
\begin{multline}
	\label{ineq:u2}
\delta (|u_{\cdot}|_{L^2}^2)_{st} + 2\int_s^t\langle G(u_r),u_r\rangle d r  
= \Big\langle\Big(X^i_{st}\cdot\partial_i + 2 X^0_{st} 
\\
+X^i_{st}X^j_{st}\partial_{ij}+ \bb{X}{\nabla X^i}_{st}\partial_i + 2X^0_{st}X^i_{st}\partial _i+2\bb{X}{\nabla X^0}_{st}+2(X^0_{st})^2
\Big)(u^2_s)\,,\, 1\Big\rangle
+ \langle u^{2,\natural}_{st},1\rangle
\\
 \les |u_s|^2_{L^2}(t-s)^\alpha + |\langle u^{2,\natural}_{st},1\rangle|
\end{multline}
where to estimate the first bracket term in the right hand side we have used integration by parts.
If a similar estimate were true for the second term remainder term \( |\langle u^{2,\natural}_{st},1\rangle| \) then we could conclude from a variant of Gronwall Lemma (Lemma \ref{lem:gronwall}) that \eqref{energy_bound} indeed holds true.
For that purpose we note from the first remainder estimate in Proposition \ref{pro:apriori} and the fact that \( 1\in E_3 \):
\[
|\langle u^{2,\natural}_{st},1\rangle|
\les
|v^{\natural}_{st}|_{-3}
\leq C \left((t-s)^{3\alpha }\sup_{r\in [0,t]}|u_r|^2_{L^2}
+(t-s)^\alpha w_{G}(s,t) dr\right)
\]
where \( w_{G}(s,t)=\int_s^t|G_{r}(u_r)u_r|_{-2}\).

If we further assume that the second monotonicity condition
 \begin{equation}
 	\label{monotone2}
 	 w_{G}(s,t)\les \int_s^t\langle G(u_r),u_r\rangle dr 
 \end{equation}
is satisfied
(which happens to be the case of many instances of \eqref{general_RPDE} under appropriate ellipticity for \( G \)) then we may absorb the latter term to the left hand side of \eqref{ineq:u2}, so as to end up in the same pre-gronwall scenario as in the classical setting and thereby show \eqref{energy_bound}.

\begin{remark}
	\label{rem:monotonicity}
	The argument above still works if the monotonicity condition \eqref{monotone} is replaced by the weaker statement that
	\begin{equation}
		\label{monotone_bis}
		\langle G(u)+\lambda u,u\rangle \ge0,\quad \quad \forall u\in W^{1,2}(\R^d)
	\end{equation}
	for some constant \( \lambda\ge0\), and similar for condition \eqref{monotone2}. We obtain in this case an analogue of \eqref{energy_bound}, with a constant \( C \) depending on \( \lambda \).
\end{remark}
\section{Parabolic rough PDEs}

Herein we work in the setting of Section \ref{sec:monotone}, assuming that \( G(u)=-L_tu \) for some uniformly elliptic operator. 
In that case it can be shown that the conditions of Remark \ref{rem:monotonicity} are satisfied.

\subsection{Linear case}
In \cite{hocquet2018energy,hocquet2020ito}, the linear case with
\[
L_tu= \partial _i(a^{ij}_t(x)\partial _ju) + b^i_t(x)\partial _iu + c_t(x)u
\]
was treated.
Here we are given non-degenerate measurable coefficents $a^{ij}$ (bounded above and below), and $b$ and $c$ satisfy appropriate integrability conditions. These ensure that the monotonicity properties of Remark \ref{rem:monotonicity} are statisfied.
In that setting, we consider
\begin{equation}
\label{rough_paths_eq}
d u_t-(L_tu +f_t(x))d t= (d \X_t \cdot\nabla +d\X^0_t)u_t\enskip,
\quad \text{on}\enskip (0,T]\times \R^d\,,
\end{equation}
which is amenable to the strategy introduced in Section \ref{sec:monotone} (we assume geometricity of the corresponding URD).
It is understood in the sense of Definition \ref{def:diff_notation}, i.e.\ as the Euler-Taylor expansion \eqref{ansatz_Q} with the URD \(\A\) built as in Sec.~\ref{sec:diff}. As usual we assume that the unknown \( u\) is bounded as a path from \( [0,T]\to L^2 \), but also that its energy
\[
\|u\|^2_{L^\infty(0,T;L^2)\cap L^2(0,T;W^{1,2})}:=\sup_{t\in [0,T]}|u_t|_{L^2}^2 + \int _0^T|\nabla u_r|_{L^2}^2dr
\]
is finite. It is not too hard to show that the drift  \( F=Lu +f \) then satisfies \( \int_0^T|F_r|_{-2}dr<\infty \), as required in Definition \ref{def:diff_notation}. Furthermore, this is also enough to guarantee that the regularity hypotheses of propositions \ref{pro:apriori} and \ref{pro:product} on the drift term are fulfilled.

In contrast with other methods such as the mild approach (see \cite{gubinelli2010rough}), the regularity of the coefficients in the left hand side of  \eqref{rough_paths_eq} can be lowered down to the usual minimal assumptions in the deterministic context (see, e.g.\ \cite[Chap. 4]{ladyzhenskaya1968linear}), for instance we only require that \( f\in L(0,T;W^{-1,2}) \).

The main solvability results are summarized as follows.
\begin{theorem}
	\label{thm:HH17}
	For the linear equation \eqref{rough_paths_eq}, given $u_0\in L^2(\R^d),$ there exists a unique solution $u$ in the class \( L^\infty(0,T;L^2)\cap L^2(0,T;W^{1,2}) \). It is global in time,
	and depends continuously%
	\footnote{For the weak topology of \( L^\infty(0,T;L^2)\cap L^2(0,T;W^{1,2}) \). A similar statement for the strong topology was shown in \cite{FNS}}
	on the rough enhancement $(X,\bb{X}{\nabla X})$ and the coefficients \( a,b,c,f \).

Moreover if \( X^0=0 \), the solution \( u_t(x) \) satisfies the following chain rule:
\begin{equation}
\label{ito_intro}
d (\beta(u_t))-\beta'(u_t)(L_tu_t+f_t)d t= d \X\cdot \nabla(\beta(u_t)).
\end{equation}
for any $\beta\in C^2(\R,\R)$ such that \( |\beta''|_{C_b}<\infty \) and \( \beta(0)=0 \).
\end{theorem}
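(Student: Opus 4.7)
My plan is to proceed by smooth approximation of the rough driver, relying on the geometricity assumption. Let $X(n)$ be smooth paths whose canonical enhancements $\mathbf{X}(n)$ converge to $\mathbf{X}$ in the sense of \eqref{rho_alpha}. For each $n$, \eqref{rough_paths_eq} is a classical time-smooth linear parabolic equation admitting a unique variational solution $u(n)\in L^\infty(0,T;L^2)\cap L^2(0,T;W^{1,2})$ by the standard theory of \cite{ladyzhenskaya1968linear}. The crucial observation is that the energy strategy outlined in Section \ref{sec:monotone}, combined with the monotonicity properties of Remark \ref{rem:monotonicity}, applies uniformly in $n$ and produces the bound
\[
\|u(n)\|^2_{L^\infty(0,T;L^2)\cap L^2(0,T;W^{1,2})} \lesssim |u_0|_{L^2}^2 + \|f\|_{L^2(0,T;W^{-1,2})}^2,
\]
with an implicit constant depending only on $\alpha$, $[\mathbf{X}]_\alpha$, the ellipticity bounds and $\|b\|,\|c\|$.

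Having secured these uniform bounds, I extract a subsequence converging weakly-$*$ in $L^\infty(0,T;L^2)$ and weakly in $L^2(0,T;W^{1,2})$ to a limit $u$. Proposition \ref{pro:apriori} gives uniform control on the remainders $u(n)^\natural$ in the appropriate $p$-variation seminorm, and \eqref{estimate_remainder_3} yields uniform time regularity of $\delta u(n)$ in $E_{-1}$; an Aubin--Lions argument then provides strong compactness of $u(n)$ in $L^2(0,T;H^{-\varepsilon}_{\mathrm{loc}})$. Together with \eqref{rho_alpha}, this justifies the passage to the limit in the Euler--Taylor expansion \eqref{euler-taylor_Q} tested against any $\phi\in E_3$, so that $u$ solves \eqref{rough_paths_eq} in the sense of Definition \ref{def:diff_notation}. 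Uniqueness and continuous dependence on $(\mathbf{X},a,b,c,f)$ follow from the linearity of the equation combined with the energy estimate \eqref{energy_bound}: the difference of two solutions (or of solutions for distinct data) solves an analogous equation whose extra drift is controlled in $L^2(0,T;W^{-1,2})$ either by $f-\tilde f$ or by the rough-path distance via \eqref{rho_alpha}.

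For the chain rule, assume $X^0=0$ and fix $\beta\in C^2$ with $\beta(0)=0$ and $|\beta''|_{C_b}<\infty$. For the smooth approximations the classical chain rule reads
\[
d\beta(u(n)_t) - \beta'(u(n)_t)(L_t u(n)_t + f_t)\,dt = dX(n)_t\cdot\nabla\beta(u(n)_t),
\]
understood weakly. To pass to the limit as $n\to\infty$ I rely on the strong compactness of $u(n)$ established above together with the growth $|\beta'(s)|\lesssim 1+|s|$ (implied by $\beta''\in L^\infty$ and $\beta(0)=0$) to handle the nonlinear term $\beta'(u(n))L_t u(n)$. The rough right-hand side is treated by applying Proposition \ref{pro:apriori} to $\beta(u(n))$ itself, whose remainder is uniformly controlled thanks to the boundedness of $\beta''$ and the uniform bound on $u(n)$ in $L^\infty(0,T;L^2)\cap L^2(0,T;W^{1,2})$. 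The principal obstacle throughout is securing strong compactness of $\{u(n)\}$ in a topology strong enough to pass the nonlinearity $\beta(\cdot)$, since — in contrast to purely parabolic settings — one cannot exploit classical Aubin--Lions time regularity of the solution and must instead extract it entirely from the URD remainder estimates \eqref{estimate_remainder_3} via the Euler--Taylor expansion.
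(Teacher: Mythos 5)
Your route to the chain rule via Wong--Zakai approximation is genuinely different from what the paper sketches, and it contains a gap that you yourself flag but do not actually close. The paper's \emph{Idea of proof} makes no use of approximation in the driver at all: for a \emph{bounded} solution $u$, the product formula of Proposition~\ref{pro:product} is applied inductively to produce the chain rule on polynomials $\beta(z)=z^k$, and the general $\beta$ is reached by density. The identity is then transferred to arbitrary finite-energy solutions because, as the paper emphasizes, ``a sufficiently large class of rough parabolic equations have bounded solutions,'' and proving that boundedness---via a rough extension of Moser iteration---is described as \emph{the core of the proof}. Your proposal never addresses $L^\infty$-boundedness, so the central difficulty identified by the paper is absent from your argument.

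The gap in your approach is concrete. After testing the approximate chain rule, the drift contribution contains the quadratic gradient term $\beta''(u(n))\,a^{ij}\partial_i u(n)\partial_j u(n)$, which is only in $L^1_{t,x}$ and requires \emph{strong} convergence of $\nabla u(n)$ in $L^2(0,T;L^2)$ to pass to the limit. The compactness you invoke---weak-$\ast$ bounds in $L^\infty L^2\cap L^2 W^{1,2}$ plus Aubin--Lions in $L^2(0,T;H^{-\varepsilon}_{\mathrm{loc}})$ using \eqref{estimate_remainder_3} for the time regularity---yields only strong convergence of $u(n)$ in spaces with negative spatial regularity, nowhere near strong $L^2$-convergence of gradients. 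Upgrading to strong $W^{1,2}$-convergence would require an energy \emph{equality} for the limit $u$ (precisely the $\beta(z)=z^2/2$ case of the chain rule), making the argument circular, or else the strong continuity of the It\^o--Lyons map from \cite{FNS}, which the theorem itself defers to a footnote as a separate and nontrivial result. As written, the passage to the limit in the nonlinear term is not justified. The paper's intrinsic argument---product formula on polynomials applied directly to the rough equation, together with the Moser-type boundedness---avoids this compactness problem entirely, which is precisely why it is structured that way.
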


Note that in transport theory, the property \eqref{ito_intro} is usually referred to as ``renormalization'' \cite{diperna1989ordinary}.
The problem of writing a chain rule for \eqref{rough_paths_eq} arises in a very natural way when studying its well-posedness, as illustrated in Section \ref{sec:monotone} and the search for an energy estimate (which corresponds to composition with $\beta(z)=\frac{z^2}{2}$).
Having \eqref{ito_intro}  is also useful to establish comparison principles, where a classical argument (due to Stampacchia) makes use of (suitable) regularized versions of the function $\beta(z)=\max(z-M,0)$  for \( M\in\R \) large enough. We refer to \cite{hocquet2015landau} for details.

We now sketch the proof of \eqref{ito_intro} as given in \cite{hocquet2020ito}.
	
\begin{idea}[Chain rule]
If $u$ is bounded, it is possible to apply \eqref{concl:prod} inductively to obtain the chain rule on polynomials, which by a density argument allows one to conclude.
This property is then extended to \emph{any} solution, thanks to the observation that a sufficiently large class of rough parabolic equations have bounded solutions.
Boundedness among the latter class is the core of the proof, and it is obtained by an extension of the celebrated Moser iteration argument to the rough context \cite{moser1964harnack}.
\end{idea}

\subsection{Non-linear variants in the transport noise case}

\subsubsection{Semi-linear case}
Motivated by a version of a rough, viscous Burgers equation, and in order to circumvent some technical difficulties arising with semi-linear equations, another approach (though related to \cite{hocquet2018energy} in some aspects) was introduced in  \cite{hocquet2020generalized}.
Letting $L_t$ be the Laplacian (this is however not essential), one considers the problem of solving \textit{semi-linear} equations of the form
\begin{equation}
	\label{rough_burgers}
	d u-\Delta u d t=\mathrm{div} F(u) d t+ \nabla u \cdot d \X\quad \text{on}\enskip [0,T]\times\R^d\,.
\end{equation}
with \( u_0\in L^2 \) and under appropriate assumptions on $F\colon \R\to\R$.
This could be achieved in principle by obtaining (local in time) energy estimates.
If $d=1,$ and $F(u)=-\frac12u^2$ (in which case \eqref{rough_burgers} becomes a Burgers-type equation with transport rough input), we need to circumvent the ``linear nature'' of rough Gronwall Lemma, which is not helpful here.
In the general case $d\geq 1,$ this can be done by introducing a new unknown $m\colon[0,T]\times \R^d\to \R,$ solving the backward equation
\begin{equation}
	\left \{
	\begin{aligned}
	d m=-\Delta mdt+\mathrm{div} (m d\X)\,,
	\\
	m_T(x)\equiv 1\,.
	\end{aligned}
\right .
\end{equation}
which is a kind of continuity equation (by analogy with the pure transport case when \( L_t\equiv0 \)).
The solution $m$ can be explicitly computed using a Feynmann-Kac representation (see for instance \cite{diehl2017stochastic}).
By testing the equation \eqref{rough_burgers} against $um$ (which can be justified for solutions of finite energy, i.e.\ with $\|\nabla u\|_{L^2([0,T]\times \R^d)}<\infty$) one obtains the following ``weighted'' energy equality
\begin{equation} \label{EnergyEquality}
	\langle u_t^2, m_t\rangle + \int_0^t 2\langle |\nabla u_r|^2, m_r\rangle  dr 
	= |u_0|^2_{L^2} - 2 \int_0^t \langle F(u_r) , \nabla( u_r m_r)\rangle dr \,.
\end{equation}
A remarkable property of equation \eqref{EnergyEquality} is that it contains no rough term.
It can then be used, together with a suitable Bihari-Lassalle inequality, in order to obtain the existence of a sufficiently small $T>0$ such that $u$ has indeed finite energy.

\subsubsection{Quasilinear case}
Based partly on the latter weighted energy estimate technique and \eqref{ito_intro}, in \cite{hocquet2021quasilinear} a quasilinear evolution equation with transport noise of the form 
\begin{equation}
\label{problem}
\begin{aligned}
d u = \mathrm{div}(a(u)\nabla u)d t +  \nabla u\cdot d\X ,\quad [0,T]\times\mathbb{T}^d,
\\
u_0\in L^2(\mathbb T^d)
\end{aligned}
\end{equation} 
was treated.
Here we let
$a\colon \R\to L(\R^d,\R^d)$ be $C^1,$ bounded above and below.  
The following result was obtained.

\begin{theorem}[\cite{hocquet2021quasilinear}]
	Let $a\colon \R\to \mathscr L(\R^d,\R^d)$ be $C^1,$ bounded above and below. There exists a solution to \eqref{problem}. If $\mathrm{div} X_t(\cdot)=0,\forall t$ or if \( d=1 \) and
	\[
	\nabla u\in L^p(0,T;L^q(\mathbb T))
	\text{ with }\frac1p+\frac{1}{2q}<\frac12,
	\]
	then this solution is unique.
\end{theorem}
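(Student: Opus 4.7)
\emph{Existence.} I would follow the Galerkin-plus-regularization scheme adapted to the URD setting. First, approximate the rough driver by a sequence of smooth lifts $\X(n)$ converging in the sense of \eqref{rho_alpha}. For each $n$, the equation reduces to a classical quasilinear parabolic problem with time-smooth, divergence-type structure, which is solvable by standard monotone operator theory (the map $u\mapsto -\mathrm{div}(a(u)\nabla u)$ is pseudomonotone and coercive thanks to the uniform ellipticity bounds on $a$). This produces a sequence $u_n\in L^\infty(0,T;L^2)\cap L^2(0,T;W^{1,2})$.

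\emph{Uniform a priori estimates.} The crucial step is to obtain bounds independent of $n$. Because $G(u)=-\mathrm{div}(a(u)\nabla u)$ satisfies the monotonicity $\langle G(u),u\rangle=\langle a(u)\nabla u,\nabla u\rangle\gtrsim\|\nabla u\|_{L^2}^2$, the framework of Section~\ref{sec:monotone} applies once we justify the chain rule $d(u_n^2)=-2u_n G(u_n)\,dt+(d\X\cdot\nabla+2\,d\X^0)(u_n^2)$ along the smooth approximation. Here one has to be careful that Theorem~\ref{thm:HH17}'s chain rule was proven in the linear case; in the quasilinear situation I would instead freeze the coefficient $a(u_n)$, invoke the linear chain rule conditionally, and then argue that the product formula (Proposition~\ref{pro:product}) is unaffected because the quasilinear nonlinearity enters only through the Bochner-integrable drift $F_n=-G(u_n)$. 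Combining with the remainder estimates of Proposition~\ref{pro:apriori} gives a uniform control of both $\|u_n\|_{L^\infty L^2\cap L^2 W^{1,2}}$ and the $p$-variation of $\delta u_n$, $R^{u_n}$, $u_n^\natural$.

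\emph{Compactness and passage to the limit.} Weak-$*$ compactness yields a candidate $u\in L^\infty L^2\cap L^2 W^{1,2}$. For the nonlinear term $a(u_n)\nabla u_n$ one needs strong convergence of $u_n$ in $L^2([0,T]\times\mathbb T^d)$; this comes from an Aubin--Lions-type argument using the $W^{1,2}$ spatial regularity together with the $\alpha$-H\"older (or $p$-variation) in time in a negative Sobolev space, furnished by \eqref{estimate_remainder_3}. With strong convergence of $u_n$ and weak convergence of $\nabla u_n$, the continuity of $a$ and dominated convergence give $a(u_n)\nabla u_n\rightharpoonup a(u)\nabla u$. Passing to the limit in the Euler--Taylor expansion \eqref{euler-taylor_Q} — using the convergence \eqref{rho_alpha} of the URDs and the stability of $u_n^\natural$ in negative variation norms — identifies $u$ as a solution.

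\emph{Uniqueness.} For the difference $w=u_1-u_2$ of two solutions, write
\[
dw=\mathrm{div}(a(u_1)\nabla w)\,dt+\mathrm{div}\big((a(u_1)-a(u_2))\nabla u_2\big)\,dt+\nabla w\cdot d\X.
\]
Apply the energy strategy of Section~\ref{sec:monotone} to $w^2$. In the divergence-free case $\mathrm{div}\,X_t=0$, the rough term is skew-symmetric at leading order, so no ``bracket'' enters the energy equality and the cross term is absorbed via Lipschitzness of $a$ and Cauchy--Schwarz, leading to a rough Gronwall closure. In the $d=1$ case, the weighted-energy device of \eqref{EnergyEquality} must be deployed: introduce the backward solution $m$ of the dual continuity equation, test $w^2$ against $m$, and exploit the integrability hypothesis $\nabla u\in L^p L^q$ with $\frac1p+\frac{1}{2q}<\frac12$ to control the cross term via H\"older's inequality and a Bihari--LaSalle argument.

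\emph{Main obstacle.} The principal difficulty is the \textbf{quasilinearity}: unlike the linear case of Theorem~\ref{thm:HH17}, the URD framework does not directly supply a product rule when the operator depends on $u$, so justifying the energy identity on the approximation level — and, for uniqueness, handling the non-Lipschitz term $(a(u_1)-a(u_2))\nabla u_2$ in the presence of only finite-energy (not bounded) solutions — is where most of the technical work lies; the weighted energy method, which removes the rough term from the energy equality entirely, is what makes this tractable.
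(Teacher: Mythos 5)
Your existence outline (Galerkin/regularization along smooth URD lifts, monotone-operator solvability, uniform energy and remainder bounds, Aubin--Lions compactness) is a reasonable match for the variational strategy. The uniqueness part, however, has a genuine gap in the $d=1$, $\mathrm{div}\,X\neq 0$ case, which is precisely the case the paper singles out as the main difficulty.

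You propose writing the equation for the difference $w=u^1-u^2$ in the form
\[
dw = \big[L_t w + \nabla\cdot(b_t w)\big]\,dt + d\A w,\qquad b_t(x)\sim\nabla u^2_t(x),
\]
and then testing $w^2$ against the backward weight $m$. Carrying out that computation (integrating by parts and using $dm + (Lm - b\cdot\nabla m)\,dt = (d\X\cdot\nabla + \nabla\cdot(d\X))m$) shows that the rough terms cancel exactly, as you expect, but the drift contribution from $\nabla\cdot(b w)$ leaves the residual term
\[
-\int_{\T} b\, w\, (\nabla w)\, m \,dx\,,
\]
alongside a harmless $\int b\,w^2\cdot\nabla m$. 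The first term involves $\nabla w$, has no sign, and cannot be absorbed by the dissipation $\int a|\nabla w|^2 m$ under the stated integrability $\nabla u^2\in L^p L^q$ with $\frac1p+\frac{1}{2q}<\frac12$: in $d=1$ this does not give $b\in L^\infty_t L^\infty_x$, which is essentially what a Young/Gronwall absorption of $\int|b||w||\nabla w|m$ would require. So the $L^2$ energy argument against $m$ does not close.

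The paper's route is an $L^1$ renormalization. Apply the chain rule \eqref{ito_intro} with $\beta_n\to\beta(z)=|z|$ (so $\beta''\geq 0$) and test $\beta_n(w)$ against $m$. The structural point is the algebraic identity $z\beta'(z)=\beta(z)$ for $\beta(z)=|z|$: in the integration by parts of $\int\beta'(w)\,\nabla\cdot(b w)\,m$ the two ``bad'' contributions become $-\int b\, w\,\beta''(w)\,\nabla w\, m$ (which vanishes in the limit because $z\beta''(z)\to 0$) and $-\int b\, w\beta'(w)\cdot\nabla m + \int\beta(w)\, b\cdot\nabla m$, which cancel exactly thanks to $w\beta'(w)=\beta(w)$. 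One is left with $d\langle|w|,m\rangle\leq -\int a\,|\nabla w|^2\beta''(w)\,m\leq 0$ and hence $\int_\T|w_t|m_t\leq 0$, after which the remaining work is to show $\inf_{t,x}m_t(x)>0$. This cancellation is the key idea and is precisely what the $L^2$ test function $w^2$ (for which $z\beta'(z)=2\beta(z)\neq\beta(z)$) cannot give you.

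A secondary remark: you appeal to ``Lipschitzness of $a$''; the hypothesis is only $a\in C^1$ bounded above and below, so Lipschitz control of the difference quotient holds only on bounded sets — in the paper this is why the integrability of $b$ is derived via the finite-energy bound on $\nabla u^2$ rather than a global Lipschitz constant. Your divergence-free claim (skew-symmetry of the rough part and direct $L^2$ Gronwall) is plausible and is indeed the easier case; the interesting content of the theorem is the $d=1$ non-divergence-free case, where the $L^1$-weighted renormalization argument is what makes it work.
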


\begin{idea}[Uniqueness]
	The proof of uniqueness when \( d=1 \) and \( \nabla X\neq 0 \), which is the main difficulty in the statement, goes as follows.
	If $u^1,u^2$ are two solutions of the same Cauchy problem, denote by $A_t$ the self-adjoint operator
\[
L_t\varphi:= \nabla\cdot(a(t,x,u^1) \nabla\varphi )\,,\quad \varphi \in W^{1,2}\,,
\]
and observe that $v:=u^1-u^2$ solves the equation
\[
\left\{\begin{aligned}
	& d v = \big[L_tv + \nabla\cdot(b_t(x)v)]dt
+ d\A v
	\\
	&v_0=0\,,
\end{aligned}\right.
\]
where
\[
b_t(x):= \mathbf 1_{v_t(x)\neq 0}\frac{a(t,x,u^1_t(x))-a(t,x,u^2_t(x))}{v_{t}(x)}\nabla u^2_t(x)
\]
Using that $a$ is Lipshitz implies the integrability property
\[
b\in\ L^{2r}(0,T;L^{2q})\quad \text{with}\enskip
\frac{1}{r} + \frac{1}{2q}<1\,.
\]
Let $m_t$ denote a non-negative solution of
\begin{equation}
	\label{m_intro}
	\left\{
	\begin{aligned}
		&\dd m  + (Lm -b\cdot \nabla m)\dd t =(\dd \X\cdot \nabla +\nabla\cdot(\dd\X))m\,,\quad \text{on}\enskip [0,T]\times \T\,,
		\\
		&m_T(x)=1\,,
	\end{aligned}\right.
\end{equation} 
	and consider an appropriate sequence $\beta_n(z)$ of $C^2_b$ approximations of the modulus function $z\mapsto|z|$.
	Applying the chain rule \eqref{ito_intro}, using the weighted energy estimate \eqref{EnergyEquality} and letting \( n\to\infty \) entails the following \( L^1 \)-type estimate on \( v=u^1-u^2 \):
	the following weighted inequality
	\[
	\int _{\T}|v_t|m_t \dd x \leq 0\,.
	\] 
	 The rest of the proof is devoted to showing that \( \inf _{t,x}m_t(x)>0 \), from which we infer uniqueness.
	\hfill\qed
\end{idea}

\subsection{Non-local transport noise}

In \cite{CN} a non-local version of the rough PDE is considered, i.e.
\begin{equation} \label{eq:Fokker-Planck}
d\rho_t = \frac12 \text{Tr}(\nabla^2 b[t,\rho_t]b[t,\rho_t]^T \rho_t) - \text{div}(\sigma_{\mu}[\rho_t] \rho_t) d \mathbf{Y}_t^{\mu}.
\end{equation}
In the above equation the solution $\rho$ should be considered to be measure-valued and in fact, the above is the Fokker-Planck equation of a non-linear/non-local diffusion
\begin{equation} \label{eq:non-local lagrangian}
dx_t = b_{\nu}(t,x_t, \rho_t) dB^{\nu}_t + \sigma_{\mu}(x_t,\rho_t) d\mathbf{Y}_t^{\mu}
\end{equation}
where $\rho_t = \mathcal{L}(x_t)$ is the law of the solution. Here, $b_{\nu} : \R^d \times \mathcal{P}(\R^d) \rightarrow \R^d$ is Lipschitz w.r.t. to both variables (the measure components is given the Wasserstein-metric) and 
$$
\sigma_{\mu}(x,\rho) := \int_{\R^d} \sigma_{\mu}(x,\xi) d \rho(\xi).
$$

To understand \eqref{eq:Fokker-Planck}, let us first fix the solution, $\rho: [0,T] \rightarrow \mathcal{P}(\R^d)$. We can then interpret \eqref{eq:Fokker-Planck} as an equation where the noisy vector field is given by
$$
X_{st}(x) = \int_s^t \sigma_{\mu}(x, \rho_r) d\mathbf{Y}_r^{\mu}, \qquad \mathbb{X}_{st}(x,y) = \int_s^t \int_s^u \sigma_{\mu}(x,\rho_r)  \sigma_{\nu}(y,\rho_u) d\mathbf{Y}_u^{\nu} d \mathbf{Y}_r^{\mu} ,
$$
and the URD is given by the construction in \eqref{eq:inf_dim to URD}.

\section{Stochastic Landau-Lifshitz-Gilbert equation}

We treat the Landau-Lifshitz-Gilbert equation driven by a linear multiplicative noise via the URD formalism. This permits to obtain useful properties resulting from the continuity of the It\^o-Lyons solution.
In the sequel we fix a stochastic basis \( (\Omega,\mathcal F,\mathbb P,\{\mathcal F_t\}) \) subject to the usual right-continuity and completeness assumptions and we let \( W\colon \Omega\times[0,T]\to H \) be a \( Q \)-Wiener process for a well chosen Hilbert space \( H \) and trace-class operator \( Q\in L(H) \) (see \cite{DPZ}).

We work on the one-dimensional torus \( \mathbb T=\R/\mathbb Z \) and we let \( H^\alpha=W^{\alpha,2}(\mathbb T;\R^3) \) for \( \alpha\in \R\).
The system of equations we are concerned with is the following \textit{Stratonovich} SPDE with unknown \( u\colon \Omega\times [0,T]\times \mathbb T\to \R^3\):
	\begin{equation}
	\label{LLG}
	\left\{\begin{aligned}
	& d u-(\Delta u+u|\nabla  u|^2 + u\times\Delta u) d t=u\times\circ d W \,,
	\quad \text{on}\enskip [0,T]\times\mathbb T 
	\\
	&|u_t(x)|=1\,,\quad \quad \forall (t,x)\in [0,T]\times\mathbb T \,,
	\\
	&u_0\in H^{1}(\mathbb T;\R^3)\,,
	\end{aligned}\right . 
	\end{equation}
where \( a\times b \) denotes vector product in \( \R^3 \) and here we take \( H=L^2(\mathbb T, \R^3) \) for the  Hilbert space in which \( W_t(\omega) \) lies.
The system \eqref{LLG} is a well-known simplified model for the dynamics of magnetized materials subject to thermal fluctuations.
A concrete motivation to study \eqref{LLG} within the URD framework is the study of small noise asymptotics.
This problem is particularly important in micromagnetism to study phase transition between different equilibrium states of the magnetization (see for instance \cite{hocquet2015landau} and the references therein).

As is easily observed (at least when \( W \) is spatially smooth), for every \( t\in [0,T]\) and for \( \mathbb P \)-a.e.\ \(\omega \), the map \( X_t(\omega)=(\cdot)\times W_{t}(\omega) \) belongs to the subclass of anti-symmetric, multiplicative operators as introduced in Section \ref{sec:anti} with \( k=3 \).
This leads us to consider an anti-symmetric (pathwise random) URD $\omega\rightarrow \mathbf{X}(\omega)$ and we think of it as a pathwise enhancement for the linear operator \( u\mapsto u\times W_t(\omega) \) (we assume for simplicity that its spatial dependence is smooth).
\begin{definition}
		\label{def:solution}
		We say that a stochastic process $u\colon \Omega\times [0,T]\to L^2(\T;\R^3)$  is a \textit{pathwise solution} of the \eqref{LLG} provided that \[\mathbb P\Big(u\in L^\infty (0,T;H^1)\cap L^2(0,T;H^2) 
		\quad \&\quad 
		 |u_t(x)|=1\,,\,\,\forall (t,x)\in [0,T]\times\mathbb T\Big)=1,\]
			and there exists \( q<1 \), a random variable $u^{\natural}\colon \Omega\to C^{q\mathrm{-var}}_{2,\mathrm{loc}}(0,T;L^2)$ such that
			\begin{equation}
			\label{rLLG_def}
			\delta u_{st}-\int_s^t(\Delta u_r +u_r|\nabla  u_r|^2 +u_r\times\Delta u_r)  d r
			=\delta X_{st}u_s + (\tfrac12(\delta X_{st})^2 +\LL_{st})u_s + u^\natural_{st}\,,
			\end{equation}
			in $L^2(\mathbb T ;\R^3)$, for every $s\leq t\in[0,T] $ and $\mathbb P$-a.s., where \( \LL \) is the L\'evy Area of \( \X \) as in Lemma~\ref{lem:antisymmetric}.
	\end{definition}

The following result was shown in \cite{gussetti2023pathwise}.

\begin{theorem}
		\label{thm:existence}
		Let $u^0\in H^k$  for some $k\ge 1$ and suppose that \( |u^0_t(x)|\equiv 1 \).
		There is a unique pathwise solution starting from \( u^0 \) with \(\mathbb P( u\in \mathcal X^k:=L^\infty(H^k)\cap L^2(H^{k+1}) )=1\).	
		It has finite moments of arbitrary order in \( \mathcal X^k\) and satisfies the Stratonovich equation:
		\[
		\begin{aligned}
		du_t 
		=(\Delta u_t +u_t|\nabla u_t|^2 + u_t\times\Delta u_t)dt + u _t\times \circ dW \quad \text{ on }\,\Omega\times[0,T]\times\T,\qquad u_0=u^0.
		\end{aligned}
		\]
		Morerover, there is a set of full measure \( \Omega_1 \) and a continuous, deterministic solution map
		\( \pi\) from \(H^k(\T ;\mathbb{S}^2)\times (C^\alpha\times C^{2\alpha}_2) \longrightarrow L^\infty(H^1)\cap L^2(H^2)\)
	 	such that for any \( \omega \in \Omega_1\)
	 	\[
	 	 u(\omega)= \pi(u^0,\X(\omega))\,.
	 	\]
Precisely, it satisfies the pathwise local Lipschitz property
		\begin{align}
		\|\pi(u^0,\mathbf{X})-\pi(\tilde u^0,\mathbf{\tilde X})\|_{L^\infty(H^1)\cap L^2(H^2)}\lesssim |u^0 - \tilde u^0|_{H^k}+[\mathbf{X}-\mathbf{\tilde X}]_{\alpha;C_b^{k+2}}\,.
		\end{align}
for an implicit constant that depends only on the size of \( u^0,\tilde u^0\in H^k ,\mathbf X,\mathbf{\tilde X}\) in the corresponding norms.
\end{theorem}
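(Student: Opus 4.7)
The plan is to construct a \emph{deterministic} solution map $\pi$ taking pairs $(u^0, \mathbf{X})$ of admissible initial data and geometric anti-symmetric URDs (in the sense of Section~\ref{sec:anti} and Lemma~\ref{lem:antisymmetric}), and only at the end to specialise $\mathbf{X}$ to the canonical Stratonovich lift of the $Q$-Wiener process, which almost surely lies in the admissible class. First I would approximate $\mathbf{X}$ by smooth anti-symmetric URDs $\mathbf{X}(n)$ with $[\mathbf{X}(n) - \mathbf{X}]_{\alpha;C^{k+2}_b} \to 0$. For each $n$, equation~\eqref{LLG} reduces to a classical PDE with smooth-in-time coefficients, for which a smooth solution $u(n)$ exists by a standard Galerkin or fixed-point argument. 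Since $u(n)\times dW(n)$ is pointwise orthogonal to $u(n)$, the classical chain rule gives $|u(n)_t(x)|\equiv 1$, and the identity $u\cdot \Delta u=-|\nabla u|^2$ then allows one to rewrite the LLG drift $u|\nabla u|^2+u\times\Delta u$ in a form that is locally Lipschitz on the unit sphere.

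Next I would establish uniform-in-$n$ bounds. At the $L^\infty L^2$ level these are free from the constraint $|u|=1$. For $L^2 H^1$ and the higher Sobolev norms entering $\mathcal{X}^k$, I would apply the energy methodology of Section~\ref{sec:monotone} to the equations satisfied by $\partial^j u(n)$, $0\le j\le k$. The crucial structural point is Lemma~\ref{lem:antisymmetric}: testing against $\partial^j u(n)$ kills the level-one $X^1$ contribution by anti-symmetry, while $X^2=\tfrac12(X^1)^2+\LL$ leaves only the L\'evy-area remainder $\LL$, whose $2\alpha$-H\"older regularity is exactly what Proposition~\ref{pro:apriori} needs in order to close the rough Gronwall argument against the test $\phi\equiv 1$, precisely as in~\eqref{ineq:u2}. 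The nonlinear drift is absorbed via the monotonicity-type identity~\eqref{monotone2}, using $u|\nabla u|^2=-u(u\cdot \Delta u)$ to combine with the diffusion term.

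With these uniform bounds, Aubin--Lions compactness extracts a limit $u$; passage to the limit in the Euler--Taylor expansion~\eqref{rLLG_def} uses the~\eqref{rho_alpha}-convergence of the URDs together with strong $L^2_t L^2_x$ convergence of $u(n)$, which also preserves $|u|=1$ pointwise a.e.\ on $[0,T]\times\T$. Moment estimates in $\mathcal{X}^k$ follow by Fernique-type bounds on the Stratonovich lift of $W$ substituted into the deterministic a priori inequalities. Identifying the limit as the Stratonovich solution of~\eqref{LLG} then reduces to matching the It\^o--Lyons and classical Stratonovich integrals on smooth approximations, which is immediate from the canonical enhancement in Definition~\ref{def:URD}.

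The main obstacle is uniqueness together with the pathwise Lipschitz bound that delivers continuity of $\pi$. For two solutions with data $(u^0,\mathbf{X})$ and $(\tilde u^0,\tilde{\mathbf{X}})$ I would derive the equation satisfied by the difference $v=\pi(u^0,\mathbf{X})-\pi(\tilde u^0,\tilde{\mathbf{X}})$; it takes the form~\eqref{ansatz_Q} with drift encoding both the differences of the two nonlinear right-hand sides (manageable thanks to $|u|=|\tilde u|=1$ and local Lipschitz behaviour on the sphere) and a residual URD $\mathbf{X}-\tilde{\mathbf{X}}$ acting on $\tilde u$. Applying Proposition~\ref{pro:apriori}, together with a version of the product formula from Proposition~\ref{pro:product} tensorised to the anti-symmetric vector-valued case, to $|v|^2$, and closing via a rough Gronwall lemma in the spirit of Section~\ref{sec:monotone}, yields the claimed local Lipschitz inequality. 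Continuity of $\pi$ is then immediate, and uniqueness is the special case $u^0=\tilde u^0$, $\mathbf{X}=\tilde{\mathbf{X}}$.
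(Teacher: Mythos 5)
The paper does not actually prove this theorem --- it states it with attribution to \cite{gussetti2023pathwise}, so there is no in-paper proof to compare against. Your outline follows the right skeleton for the URD methodology used throughout the survey (smooth approximation, uniform energy estimates exploiting the anti-symmetric structure of Section~\ref{sec:anti}, compactness, and uniqueness/stability via a difference equation and rough Gronwall), but several steps are glossed in a way that hides genuinely nontrivial work or, in one case, is circular.

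\emph{(i) Preservation of the sphere constraint.} You argue that orthogonality of the rough term to $u$ combined with the identity $u\cdot\Delta u=-|\nabla u|^2$ yields $|u(n)|\equiv 1$, but that identity already presupposes $|u|^2\equiv 1$; the argument as written is circular. The non-circular route is to show that $\rho:=|u(n)|^2-1$ solves $\partial_t\rho=\Delta\rho+2\rho|\nabla u(n)|^2$ with $\rho_0=0$ and invoke uniqueness for this linear parabolic problem --- the rough term does drop out by anti-symmetry, but that is precisely the observation that must be made explicit.

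\emph{(ii) Higher-order energy estimates.} When you test $\partial^j u(n)$ against itself, anti-symmetry of $X^1$ kills only the top-order pairing $\langle X^1\partial^j u,\partial^j u\rangle$. The commutators $[\partial^j,X^1]$ produce multiplications by spatial derivatives of the noise coefficients which are \emph{not} anti-symmetric; these have to be absorbed into the rough Gronwall using the $C^{k+2}_b$-norm of the driver --- which is exactly the quantity appearing in the Lipschitz bound of the statement, and your sketch should account for where that dependence is created.

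\emph{(iii) Uniqueness and stability.} Invoking ``a tensorised version of Proposition~\ref{pro:product}'' is a placeholder for a concrete cancellation: testing the difference equation for $v$ against $v$, one must verify $\langle A^2_{st}v_s,v_s\rangle=-\tfrac12|X^1_{st}v_s|^2$ (the L\'evy area drops out since $\langle\LL_{st}v_s,v_s\rangle=0$ by anti-symmetry, cf.\ Lemma~\ref{lem:antisymmetric}), and that this cancels the quadratic contribution $\tfrac12|\delta v_{st}|^2\approx\tfrac12|A^1_{st}v_s|^2$. Moreover, the quasilinear LLG drift yields terms such as $\langle u\times\Delta v,v\rangle$ which, after integration by parts, are cubic and cannot be controlled in $L^\infty(L^2)\cap L^2(H^1)$ alone; one must argue at the $H^1$ level and use the one-dimensional embedding $H^1(\T)\hookrightarrow L^\infty(\T)$, which is why the stability map lands in $L^\infty(H^1)\cap L^2(H^2)$. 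None of this is impossible, but your sketch skips precisely the parts where the anti-symmetric URD structure is load-bearing.
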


A useful consequence of the previous result is that if $u=\pi(u^0, h)$ is the solution driven by a Cameron-Martin path $h\in\mathcal{H}$ enhanced geometrically to an antisymmetric URD \( \mathbf X^{h} \), then $(\mathbb{P}\circ(u^\epsilon)^{-1})_{\epsilon>0}$ satisfies a large deviations principle in \( L^\infty(H^k)\cap L^2(H^{k+1}) \) with good rate function
		\begin{align}
		\mathcal{J}(y):= \inf _{h\in\mathcal{H}}\big(\mathcal{I}(h) : \pi(u^0,\X^h)=y\big)\, ,
		\end{align}
where we have denoted by \( u^\epsilon \) the solution obtained by the substitution of \( W\leftarrow \sqrt{\epsilon}W \) in \eqref{LLG} and \(\mathcal{I}(h):=
\int_{0}^{T}|\dot{h}_t|^2 dt\) if \( h\in \mathcal{H}\)
 (it equals \( +\infty \) otherwise).

Especially the Wong-Zakai result proved essential in establishing further results concerning the Landau-Lifshitz-Gilbert equation in the subsequent works \cite{G23,G22}. In \cite{G22}, existence (and in some cases also uniqueness) of an ergodic invariant measure was established. Here, the Wong-Zakai result implied the so-called Feller property which is not possible to prove by purely probabilistic estimates. The work \cite{G23} introduced a general framework to study pathwise central limit theorem and moderate deviations principle for a general class of stochastic partial differential equations perturbed with a small linear multiplicative noise. Also here, the rough path theory and particularly the framework of URD plays the key role.

\section{Navier-Stokes equations}
\label{sec:NS}

This line of research started in \cite{hofmanova2019navier} with the Navier-Stokes equations on the velocity level perturbed by a rough transport noise 

\begin{align}\label{nse:1}
\left\{
\begin{aligned}
du+(u\cdot\nabla u+\nabla p)d t &= \Delta u d t+ \sigma_{\mu}\cdot\nabla u d\mathbf Y^{\mu},\qquad \text{on }[0,T]\times\mathbb{T}^{d},\\
{\rm div} u &= 0,\\
u_{0}&\in H.
\end{aligned}
\right.
\end{align}
Here and the sequel $H$ denotes the space of $L^{2}$-integrable, mean and divergence-free vector fields, $u$ denotes the velocity of an incompressible fluid, $p$ is the associated pressure. We suppose as in Example \ref{example:bracket} that $\mathbf{Y}$ is a finite-dimensional, continuous geometric rough path of finite $p$-variation with $p\in [2,3)$ and that the coefficients $\sigma_{i}$ are divergence-free, bounded with  bounded derivatives up to order two. As discussed in the paragraph on Navier-Stokes equations in Section~\ref{sec:diff}, the system \eqref{nse:1} can be rewritten using the URD formalism applied to the couple of the velocity $u $ and the effective pressure $\pi:=\int_{0}^{\cdot}\nabla p_{r}d r$. However, similarly to the deterministic setting, it proves beneficial to work with the equation for $u$ and  $\pi$ separately. This leads us to the following definition of solution in the spirit of Definition~\ref{def:diff_notation}. Here, $P$ denotes the Leray projection, $Q=I-P$, $H^{n}$ and $H_{\perp}^{n}$, $n\in \mathbb{R}$, are the projected Bessel potential  spaces $H^{n}=P (I-\Delta)^{-n/2}L^{2}$, $H_{\perp}^{n}=Q (I-\Delta)^{-n/2}L^{2}$.

\begin{definition}\label{def:solution2}
A pair of weakly continuous functions $(u, \pi) : [0,T] \rightarrow H \times H_{\perp}^{-3}$ is called a solution of \eqref{nse:1} if $u\in L^2(0,T; H^1)\cap L^{\infty}(0,T; H)$ and the remainders
\begin{align} 
u_{st}^{P,\natural}& :=   \delta u_{st}  +  \int_s^t \left[P(u_{r}\cdot\nabla u_{r}) -\nu \Delta u_r  \right]\,dr   -  [A_{st}^{P,1} +A_{st}^{P,2}]u_s , \label{SystemSolutionU} \\
u_{st}^{Q,\natural}& :=    \delta \pi_{st}+ \int_s^t  Q(u_{r}\cdot\nabla u_{r})\,dr   -  [A_{st}^{Q,1}+A_{st}^{Q,2}] u_s , \label{SystemSolutionPi}
\end{align}
have finite $q$-variation for some $q<1$ in $H^{-3}$  and $H_{\perp}^{-3}$, respectively.
\end{definition}

Using the framework of URD, it was then possible to obtain the following result.

\begin{theorem}
Let $d=2,3$. There exists a solution to \eqref{nse:1} satisfying the energy inequality
$$
\|u_{t}\|_{L^{2}}^{2}+2\int_{0}^{t}\|\nabla u_{s}\|_{L^{2}}^{2} d s\leq \|u_{0}\|_{L^{2}}^{2},\qquad t\in[0,T].
$$
If $d=2$ and the vector fields $\sigma_{i}$ are constant, then the solution is unique, satisfies the energy equality and depends continuously in appropriate topologies on the initial condition, the rough path $\mathbf Y$ and the vector fields $\sigma_{i}$.
\end{theorem}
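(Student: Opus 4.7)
The plan is to combine the URD framework of Section~3 with classical Navier--Stokes techniques by a vanishing-noise-regularisation argument. First I would smooth the rough path $\mathbf{Y}$ into a sequence $\mathbf{Y}(n)$ of smooth geometric rough paths converging to $\mathbf{Y}$ in the $\alpha'$-rough path metric for some $\alpha'\in(1/3,\alpha)$, using Friz--Victoir as recalled after Example~\ref{example:bracket}. For each $n$ the equation \eqref{nse:1} becomes a standard Navier--Stokes system with a smooth time-dependent transport perturbation, for which the classical Leray--Hopf theory delivers a weak solution $u(n)\in L^\infty(0,T;H)\cap L^{2}(0,T;H^1)$ together with an associated pressure whose primitive $\pi(n)$ lives in $H^{-3}_\perp$.

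\textbf{Uniform bounds and compactness.} The crucial step is to derive bounds on $(u(n),\pi(n))$ that are uniform in $n$ via the energy strategy of Section~\ref{sec:monotone}. Since each $\sigma_\mu$ is divergence-free, the transport operator $\sigma_\mu\cdot\nabla$ is antisymmetric in $L^2$, so composing with $P$ leaves the zeroth-order monotonicity \eqref{monotone} untouched and the convective term $\langle P(u\cdot\nabla u),u\rangle$ vanishes by the same integration by parts as in the deterministic case; the dissipative term supplies \eqref{monotone2}. Applying the product formula (Proposition~\ref{pro:product}) to $|u(n)|^2$, testing against the constant $\phi\equiv 1$ and using the remainder estimate (Proposition~\ref{pro:apriori}) together with the rough Gronwall lemma delivers the energy inequality, together with $q$-variation bounds on $u(n)^{P,\natural}$ and $u(n)^{Q,\natural}$, uniformly in $n$. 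Aubin--Lions then yields strong compactness of $u(n)$ in $L^2(0,T;L^2)$ (handling the convective nonlinearity), while weak-$\ast$ compactness takes care of the linear quantities. The continuity of the URD map $\mathbf{Y}(n)\mapsto\A(n)$ allows passage to the limit in the Euler--Taylor expansions \eqref{SystemSolutionU}--\eqref{SystemSolutionPi}; lower semicontinuity of the energy norms produces the claimed inequality in the limit.

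\textbf{Uniqueness and continuity in 2D with constant $\sigma_\mu$.} When the vector fields $\sigma_\mu$ are constant, the operators $A^{P,i}_{st}$ and $A^{Q,i}_{st}$ are Leray projections of constant-coefficient differential operators, and in particular $\sigma_\mu\cdot\nabla$ commutes with $P$, with $\Delta$, and with itself. For two solutions $(u^{1},\pi^{1}),(u^{2},\pi^{2})$ the difference $v=u^{1}-u^{2}$ satisfies a linear RPDE
\begin{equation*}
dv+P(v\cdot\nabla u^{1}+u^{2}\cdot\nabla v)\,dt=\Delta v\,dt+(d\X\cdot\nabla)v
\end{equation*}
with the same URD $\A^{P}$. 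Applying Proposition~\ref{pro:product} to $|v|^{2}$ and testing against $1$, the rough transport term integrates to zero thanks to spatial periodicity and the constancy of $\sigma_\mu$ (the bracket coefficient $\bb{X}{\nabla X}$ vanishes since $\nabla X=0$), yielding the purely deterministic identity
\begin{equation*}
\|v_t\|_{L^2}^{2}+2\int_{0}^{t}\|\nabla v_r\|_{L^2}^{2}\,dr=-2\int_{0}^{t}\langle v_r\cdot\nabla u^{1}_r,v_r\rangle\,dr.
\end{equation*}
In two dimensions, Ladyzhenskaya's inequality $\|v\|_{L^4}^{2}\lesssim\|v\|_{L^2}\|\nabla v\|_{L^2}$ combined with Young's inequality absorbs the gradient on the right-hand side, leaving an $L^{1}_{t}$-Gronwall driven by $\|\nabla u^{1}\|_{L^2}^{2}\in L^{1}(0,T)$; this forces $v\equiv 0$ and shows that the same identity, applied to a single solution, yields energy equality. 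Continuous dependence on $(u_{0},\mathbf{Y},\sigma)$ follows by a variant of the same difference argument, treating the contributions from varying $\mathbf{Y}$ and $\sigma$ as additional drift terms whose smallness is controlled by the continuity of the URD and Proposition~\ref{pro:apriori}.

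\textbf{Main obstacle.} The principal difficulty is the circular nature of the energy argument: applying Proposition~\ref{pro:product} to $|u|^{2}$ presupposes that $u\in L^{2}(0,T;H^{1})$, which is exactly the bound the energy estimate is meant to produce. This must be handled as an a priori bound at the level of the smooth approximations $u(n)$, where the calculus is classical, and only afterwards transported to the limit. A secondary obstacle is that the pressure is genuinely rough in time and only makes sense as $\pi=\int_{0}^{\cdot}\nabla p$ valued in $H^{-3}_{\perp}$; the joint $(P,Q)$-URD of Section~\ref{sec:diff}, and the separate treatment of \eqref{SystemSolutionU}--\eqref{SystemSolutionPi}, is precisely what is needed to decouple the pressure from the energy-type analysis for the velocity.
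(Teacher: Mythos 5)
Your proposal is essentially correct and follows the strategy that the paper sketches (Section 3.3: product formula, testing against $\phi\equiv 1$, remainder estimates of Proposition~\ref{pro:apriori}, rough Gronwall), and you correctly identify both the circularity in the product-formula argument and the reason constant vector fields are needed for uniqueness. The one genuine divergence from the proof in \cite{hofmanova2019navier} is the approximation scheme: you propose mollifying the rough path $\mathbf Y$ and invoking Leray--Hopf theory for each smooth drive, whereas the reference uses a Galerkin approximation of the velocity while keeping the rough driver intact. Both routes are viable, but they buy different things: your smooth-drive approach makes the a priori energy estimate entirely classical (since for smooth $Y(n)$ and divergence-free $\sigma_\mu$ the transport term is exactly energy-conservative, giving $L^\infty_tL^2_x\cap L^2_tH^1_x$ bounds uniform in $n$ for free), but in dimension three it requires a choice in the non-unique Leray--Hopf class and it shifts all the rough-path work to the passage to the limit; Galerkin keeps the rough driver from the start, so the URD remainder estimates are used already at the level of the finite-dimensional system (where the product formula is trivially justified), and the existence question for the approximations is a genuine ODE/rough-differential-equation question rather than a PDE one. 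In either route, the compactness step needs more care than the phrase ``Aubin--Lions'' suggests: the classical version requires a uniform bound on $\partial_t u(n)$ in a negative space, which fails because $\dot{Y}(n)$ blows up; what is actually used is a rough Aubin--Lions variant built from the uniform increment estimate \eqref{estimate_remainder_3} of Proposition~\ref{pro:apriori} combined with Arzel\`a--Ascoli in a weak topology. Finally, the energy inequality in the limit should be obtained by lower semicontinuity as you say, rather than by applying the product formula to the limit $u^2$ (the latter would require showing that $u$ is a solution first, which is part of what passage to the limit must establish).
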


The restriction of constant vector fields was due to technical problems coming from the fact that the Leray projection onto divergence-free vector fields must be included in corresponding URD. This issue could be overcome when working on the level of vorticity $\xi=\mathrm{curl}u=\nabla\times u$. Particularly, the work \cite{HLN21} considered
\begin{align}\label{nse:2}
\left\{
\begin{aligned}
d\xi+(u\cdot\nabla \xi-\mathbf{1}_{d=3}\xi\cdot\nabla u)d t &= \Delta \xi d t+ (\sigma_{\mu}\cdot\nabla \xi - \mathbf{1}_{d=3}\xi\cdot\nabla\sigma_{\mu}) d\mathbf Y^{\mu},\qquad \text{on }[0,T]\times\mathbb{T}^{d},\\
\xi_{0}&\in H.
\end{aligned}
\right.
\end{align}
The velocity $u$ can be then be  recovered using the inverse of the curl, the so-called Biot-Savart operator. On the velocity level, \eqref{nse:2}  reads as
\begin{align}\label{nse:3}
\left\{
\begin{aligned}
du+(u\cdot\nabla u+\nabla p)d t &= \Delta u d t+ (\sigma_{\mu}\cdot\nabla u + \nabla\sigma_{\mu}u )d\mathbf Y^{\mu},\qquad \text{on }[0,T]\times\mathbb{T}^{d},\\
{\rm div} u &= 0,\\
u_{0}&\in H^{1}.
\end{aligned}
\right.
\end{align}

The main result was as follows.

\begin{theorem}
If $d=2,3$, then there exists a unique local strong solution to \eqref{nse:3}. If $d=2$, then the  solution is global and depends continuously on $(u_{0},\sigma,\mathbf Y)$.
\end{theorem}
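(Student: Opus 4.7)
The plan is to work at the vorticity level, formulating \eqref{nse:2} as a rough PDE for $\xi=\mathrm{curl}\,u$ driven by the URD $\A$ whose first level acts as
\[
A^1_{st}v = Y^\mu_{st}\bigl(\sigma_\mu\cdot\nabla v - \mathbf{1}_{d=3}\,v\cdot\nabla\sigma_\mu\bigr),
\]
enhanced geometrically from $\mathbf Y$ as in Example \ref{example:bracket}. This fits within the framework of Section \ref{sec:diff} (with a zeroth-order component in 3d) and crucially avoids the Leray projection, thereby bypassing the obstruction that forced constant vector fields in the previous velocity-level theorem. The velocity is recovered from $\xi$ via the Biot--Savart operator, which is continuous between the relevant Sobolev scales and commutes sufficiently with the coefficients.

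\textbf{Step 1 (a priori energy estimates).} I would apply the product formula of Proposition \ref{pro:product} to $\xi\cdot\xi$ and test the resulting equation against the constant function $1$, following the monotone scheme of Section \ref{sec:monotone}. Integration by parts annihilates the transport part of the URD; the zeroth-order contribution is absorbed by the rough Gronwall lemma via Remark \ref{rem:monotonicity}; and the viscous Laplacian supplies the dissipation needed to verify \eqref{monotone2}. In 2d, where vortex stretching is absent and $u\cdot\nabla\xi$ is $L^2$-orthogonal to $\xi$ (by $\mathrm{div}\,u=0$), this yields the global bound
\[
\sup_{t\in[0,T]}\|\xi_t\|_{L^2}^2+2\int_0^T\|\nabla\xi_r\|_{L^2}^2\,dr\les\|\xi_0\|_{L^2}^2.
\]
In 3d the stretching term $\xi\cdot\nabla u$ is controlled by $\|\nabla u\|_{L^\infty}\|\xi\|_{L^2}^2\les\|\xi\|_{H^1}^{3/2}\|\xi\|_{L^2}^{3/2}$ via Sobolev embedding, so the rough Gronwall argument only closes on a maximal interval $[0,T^\ast)$ depending on $\|\xi_0\|_{L^2}$ and the rough norm of $\mathbf Y$.

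\textbf{Step 2 (construction of a solution).} Invoking geometricity, approximate $\mathbf Y$ by canonical lifts $\mathbf Y(n)$ of smooth paths. For each $n$, \eqref{nse:3} reduces to a classical parabolic system to which standard theory provides smooth solutions $u(n)$, globally in 2d and on $[0,T^\ast]$ in 3d, satisfying the bounds of Step 1 uniformly in $n$. The remainder bounds of Proposition \ref{pro:apriori} upgrade these to uniform $q$-variation control of the Euler--Taylor remainders. Aubin--Lions compactness then extracts a convergent subsequence whose limit solves \eqref{nse:2}, and strong convergence in $L^2(0,T^\ast;H^\beta)$ for some $\beta<1$ is enough to identify the nonlinear terms $u\cdot\nabla\xi$ and (in 3d) $\xi\cdot\nabla u$.

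\textbf{Step 3 (uniqueness and continuous dependence in 2d).} For two solutions $u^1,u^2$ sharing initial data, the difference $w=u^1-u^2$ satisfies a linear rough PDE with the same URD and the additional affine drift $w\cdot\nabla u^2+u^1\cdot\nabla w+\nabla q$. Applying the energy scheme of Section \ref{sec:monotone} to $\|w\|_{L^2}^2$, the cross-term $\langle w\cdot\nabla u^2,w\rangle$ is controlled via Ladyzhenskaya's inequality $\|w\|_{L^4}\les\|w\|_{L^2}^{1/2}\|\nabla w\|_{L^2}^{1/2}$ together with $\nabla u^2\in L^2(0,T;L^2)$, and the rough Gronwall lemma concludes. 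Continuous dependence on $(u_0,\sigma,\mathbf Y)$ follows by the same stability scheme, now comparing two URDs and using continuity of Biot--Savart together with the linearity of the rough terms in their enhancement. The main obstacle is the identification of the nonlinearity in the passage to the limit: compactness in classical spaces must be coupled with the URD remainder estimates to extract enough time-regularity of $\xi$ to upgrade weak to strong convergence, and the zeroth-order piece $\nabla\sigma_\mu u$ must be carefully absorbed into the rough Gronwall argument since, unlike the pure transport case, it produces a non-vanishing effective drift under $\langle\cdot,1\rangle$-testing; in 3d this is further compounded by the supercriticality of vortex stretching, which precludes a global result in line with the classical deterministic theory.
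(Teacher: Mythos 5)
Your strategy — passing to the vorticity equation \eqref{nse:2} so that the URD becomes a pure first-order (plus zeroth-order in 3d) differential operator without the Leray projection, then running the tensorization/product-formula and rough Gronwall machinery of Section~\ref{sec:monotone}, constructing solutions by geometric smooth approximation and compactness, and recovering $u$ from $\xi$ by Biot--Savart — is exactly the route the paper (via \cite{HLN21}) indicates for this theorem. The remarks about the divergence-free cancellation of $u\cdot\nabla\xi$, the supercritical nature of the 3d stretching term limiting one to local existence, and the need to handle the multiplicative piece $\xi\cdot\nabla\sigma_\mu$ in the tensorized equation are all consistent with the paper's description.

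A minor point worth tightening: in your 3d estimate you pass through $\|\nabla u\|_{L^\infty}\|\xi\|_{L^2}^2$, but $\|\nabla u\|_{L^\infty}$ is not controlled by any interpolation between $\|\xi\|_{L^2}$ and $\|\xi\|_{H^1}$. The standard way to reach $\|\xi\|_{L^2}^{3/2}\|\nabla\xi\|_{L^2}^{3/2}$ is via $\langle \xi\cdot\nabla u,\xi\rangle\les\|\xi\|_{L^3}^3$ (using Calder\'on--Zygmund bounds $\|\nabla u\|_{L^3}\les\|\xi\|_{L^3}$) together with the Gagliardo--Nirenberg interpolation $\|\xi\|_{L^3}\les\|\xi\|_{L^2}^{1/2}\|\nabla\xi\|_{L^2}^{1/2}$ in $d=3$; your final bound is right, the intermediate step is not.
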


We note that due to the higher regularity of the initial condition required for the vorticity form \eqref{nse:2}, the solution $u$ to \eqref{nse:3} in the above result is analytically strong. In dimension $d=3$ it exists only up to some possibly small time.

The technology developed in \cite{hofmanova2019navier,HLN21} was successfully employed in \cite{FHLN20} to obtain regularization of the Navier-Stokes equations by deterministic vector fields. In particular, the focus was on
\begin{align}\label{nse:4}
\left\{
\begin{aligned}
d\xi+(u\cdot\nabla \xi-\xi\cdot\nabla u)d t &= \Delta u d t+ \Pi(v\cdot\nabla \xi), \qquad\text{on }[0,\infty)\times\mathbb{T}^{3},\\
\xi_{0}&\in H,
\end{aligned}
\right.
\end{align}
where $\Pi$ denotes the Leray projection and  $v$ is a deterministic divergence-free vector field.
Similarly to \eqref{nse:2}, \eqref{nse:4} possesses a local unique solution. The additional term $\Pi(v\cdot\nabla \xi)$ is energy conservative due to the divergence-free constraint of $v$, namely, $\langle\Pi(v\cdot\nabla \xi,\xi\rangle=0$. Hence,  seemingly it does not influence the final time of existence, denoted by $\tau(\xi_{0})$, which is typically obtained from the energy inequality. Nevertheless, it is possible to choose $v$ in a way that the following holds true.

\begin{theorem}
Let $\mu$ be a probability measure on $H$. For any $\varepsilon>0$, there is a deterministic vector field $v$ such that
$$
\mu\big(\{\xi_{0}\in H;\tau(\xi_{0})=\infty\}\big)\ge 1-\varepsilon.
$$
In other words, the collection of those initial data $\xi_{0}\in H$ such that the system \eqref{nse:4} is globally well-posed has a $\mu$-measure greater than $1-\varepsilon$.
\end{theorem}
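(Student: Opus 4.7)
The plan is a ``lucky path'' argument: randomize the vector field by letting $v$ be a sample of a stochastic transport perturbation with prescribed Itô-Stratonovich scaling, use the URD machinery of Section~\ref{sec:NS} to control the blow-up probability uniformly in the initial data, and then extract a deterministic realization via Fubini. First I would replace the deterministic advection term $\Pi(v\cdot\nabla\xi)$ by the stochastic analogue $\Pi(\sigma_\mu^N\cdot\nabla\xi-\xi\cdot\nabla\sigma_\mu^N)\circ dY^\mu$ for divergence-free vector fields $\{\sigma_\mu^N\}$ tuned in the spirit of Kraichnan-type noises so that the Itô corrector produces an additional effective dissipation $\kappa_N\Delta\xi$ with $\kappa_N\to\infty$ as $N\to\infty$. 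Lifting $(Y,\mathbb{Y})$ to a geometric rough path and forming the associated URD as in Section~\ref{sec:diff} puts the perturbed equation into the form \eqref{nse:2}, so that the local well-posedness result on the vorticity level applies and yields a random lifetime $\tau^N(\xi_0)$.

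Next I would exploit the enhanced dissipation to control the blow-up probability. Testing the vorticity equation against $\xi^N$ and combining the rough integration-by-parts identity of Proposition~\ref{pro:product} with the remainder bound of Proposition~\ref{pro:apriori} produces an energy estimate with an additional $\kappa_N\|\nabla\xi^N\|_{L^2}^2$ dissipation; after handling the vortex-stretching term $\xi\cdot\nabla u$ via Sobolev inequalities and a stopping-time decomposition, this should yield a quantitative tail estimate of the form
\[
\mathbb{P}\big(\tau^N(\xi_0)<\infty\big)\le \frac{C(R)}{\kappa_N}
\]
whenever $\|\xi_0\|_H\le R$. Choosing $R$ large so that $\mu(\{\|\xi_0\|_H>R\})\le\varepsilon/2$ and then $\kappa_N$ large enough that $C(R)/\kappa_N\le\varepsilon/2$, a Fubini argument on $(\Omega\times H,\mathbb{P}\otimes\mu)$ produces a realization $\omega^\star\in\Omega$ for which
\[
\mu\big(\{\xi_0:\tau^N(\xi_0)(\omega^\star)=\infty\}\big)\ge 1-\varepsilon.
\]
A suitable mollification of the sample path $v(t,x):=\sum_\mu\sigma_\mu^N(x)\dot Y^\mu_t(\omega^\star)$ then defines a deterministic, divergence-free $v$, and the continuity of the URD solution map guarantees that the global-existence conclusion persists after this Wong-Zakai-type regularization.

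The main obstacle will be the uniform quantitative tail estimate on $\tau^N$: in dimension three the sole $L^2$ energy identity for the vorticity is insufficient to rule out blow-up, so one has to track carefully how the stretching term $\xi\cdot\nabla u$ is tamed by the extra $\kappa_N$-dissipation coming from the level-two component of $\A$ via the product formula, combined with a stopping-time analysis matched to the scaling of $\kappa_N$. Once this probabilistic bound is secured, the extraction of a deterministic $v$ via Fubini and the transfer from the rough realization to a smooth deterministic vector field via continuity of the solution map are comparatively routine.
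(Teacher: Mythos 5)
Your high-level blueprint is the right one and, in outline, matches what \cite{FHLN20} actually does: randomize the extra transport term by a Kraichnan-type noise, obtain a quantitative blow-up probability bound uniform over $\{|\xi_0|_H\le R\}$, integrate against $\mu$ and invoke Fubini to select a ``lucky'' realization, and finally transfer to a genuinely deterministic, smooth $v$ via a Wong--Zakai argument exploiting the continuity of the URD solution map from \cite{HLN21}. However, the central technical step --- the derivation of the tail bound $\mathbb P(\tau^N(\xi_0)<\infty)\le C(R)/\kappa_N$ --- is not obtainable the way you describe, and this is where the proposal has a genuine gap.

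You propose to extract the effective dissipation $\kappa_N\|\nabla\xi^N\|_{L^2}^2$ by testing the rough vorticity equation against $\xi^N$, using Propositions~\ref{pro:product} and~\ref{pro:apriori}. This cannot work: those tools produce a \emph{pathwise} energy estimate driven by a \emph{geometric} URD, and for a geometric (Stratonovich-type) lift the It\^o corrector is absorbed into the second level of $\A$ --- it is not visible as a drift. Pathwise, a divergence-free transport rough term is energy-conserving and contributes no net dissipation; after the Gronwall step one is left with only the bare $\Delta$-dissipation, exactly as in Section~\ref{sec:monotone}. The enhanced dissipation $\kappa_N\Delta$ is a \emph{statistical} effect: it appears only after passing to the It\^o formulation and taking expectations, which kills the martingale part and exposes the corrector $\tfrac12\sum_\mu\sigma_\mu^N\cdot\nabla(\sigma_\mu^N\cdot\nabla\,\cdot)$ as dissipation in $\frac{d}{dt}\mathbb E\|\xi\|_{L^2}^2$. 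If instead you tried to make the corrector appear as a drift while keeping the URD machinery, you would be forced to use the It\^o (non-geometric) lift, and then the product formula and energy identity of Propositions~\ref{pro:product} and~\ref{pro:apriori} no longer apply in the form used in the paper. This is precisely why the paper states that the proof ``relies on probabilistic arguments developed in \cite{FL21}'' --- the tail estimate is an It\^o-calculus/expectation argument, not a rough-path one. Two further points: first, you should not include the stretching term $-\xi\cdot\nabla\sigma_\mu^N$ in the randomized noise, since the whole point of placing the perturbation on the vorticity level (as in \eqref{nse:4}) is that pure transport, $\Pi(v\cdot\nabla\xi)$, is $L^2$-conservative and does not feed back into the stretching; adding $\xi\cdot\nabla\sigma_\mu^N$ destroys that structure. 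Second, you describe the passage from the selected rough realization to a smooth deterministic $v$ as ``comparatively routine,'' but the paper emphasizes that this Wong--Zakai-type result is new and itself combines probabilistic and rough-path techniques --- the difficulty being that one must control the behavior of the (possibly finite) blow-up time under perturbation of the driver, which requires more than the local Lipschitz continuity of the It\^o--Lyons map.
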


 The proof relies on probabilistic arguments developed in  \cite{FL21}, rough path theory tools from  \cite{hofmanova2019navier,HLN21} and a new Wong-Zakai approximation result, which itself combines probabilistic and rough path techniques.

The uniqueness of weak solutions to \eqref{nse:1} satisfying the energy inequality remains an outstanding open problem. A natural question is therefore whether there is a selection of  solutions such that the  semiflow property holds. Precisely, whether there exists a map $U:(u_{0},\mathbf Y)\mapsto u$ which associates a solution to the initial condition $u_{0}$, and the rough path $\mathbf Y$ so that
$$
U(u_{0},\mathbf Y)(t_{1}+t_{2})=U\big(U(u_{0},\mathbf Y)(t_{1}),\mathbf Y(t_{1}+\cdot)\big),\quad \text{for any }t_{1},t_{2}\geq 0.
$$
Without uniqueness, this property cannot hold true generally.
Moreover, if the driving signal $\mathbf Y$ is random and fulfills a suitable variant of the cocycle property in the rough path framework, we may further ask whether \eqref{nse:1} generates a random dynamical system over a measurable dynamical system $(\Omega,\mathcal{F},\mathbb{P},(\theta_{t})_{t\geq 0})$, meaning among others, whether the Borel measurable map
$$
\Phi:(t,\omega,u_{0})\mapsto U(u_{0},\mathbf Y(\omega))
$$
satisfies the cocycle property
$$
\Phi(t+s,\omega)=\Phi(t,\theta_{s}\omega)\circ\Phi(s,\omega),\quad\text{for any }s,t\in [0,\infty), \omega\in\Omega.
$$
Due to the difficulties originating from existence of the so-called energy sinks, these questions can only be answered affirmatively with the following modification. An auxiliary variable $E$ is included as part of the solution and it corresponds to the energy of the system. Accordingly, a weak solution consists of the couple $U=(u,E)$ of the velocity and the energy  and it has the initial datum $(u_{0},E_{0})$. With this modification, the above mappings become
$$
U:(u_{0},E_{0},\mathbf Y)\mapsto  (u,E)
,\qquad \Phi:(t,\omega,u_{0},E_{0})\mapsto U(u_{0},E_{0},\mathbf Y(\omega)).
$$
The semiflow and cocycle property are defined correspondingly.

\begin{theorem}
The system \eqref{nse:1} admits a semiflow selection in the class of weak solutions and, under appropriate assumptions on $\mathbf Y$, it generates a random dynamical system.
\end{theorem}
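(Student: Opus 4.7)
The plan is to adapt the abstract semiflow selection scheme of Krylov (as developed for martingale problems and later extended to pathwise settings for SPDEs) to the rough-path solution class of Definition~\ref{def:solution2}. Fix the augmented phase space $\mathcal{X}=H\times[0,\infty)$ for the couple $(u_0,E_0)$ and the trajectory space $\mathcal{T}=C_w([0,\infty);H)\times \mathrm{BV}_{\mathrm{loc}}([0,\infty))$, with weak continuity for the velocity and bounded variation for the non-increasing energy $E$. For fixed $\mathbf Y$, let $\mathcal{U}(u_0,E_0,\mathbf Y)\subset \mathcal{T}$ be the set of pairs $(u,E)$ such that $u$ is a solution of \eqref{nse:1} in the sense of Definition~\ref{def:solution2} with initial datum $u_0$, and $E$ is non-increasing with $E_0$ prescribed and $E_t\ge\tfrac12|u_t|_{L^2}^2+\int_0^t|\nabla u_r|_{L^2}^2\,dr$; this augmentation is exactly what absorbs the energy sinks mentioned in the statement.

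First I would verify the three structural properties needed for the selection algorithm. \textbf{Non-emptiness} follows from the existence part of the preceding Navier--Stokes theorem, taking $E$ to be the right-hand side of the energy inequality. \textbf{Sequential compactness} of $\mathcal{U}(u_0,E_0,\mathbf Y)$ in $\mathcal{T}$ is obtained by combining the uniform energy bound with the a priori remainder estimates of Proposition~\ref{pro:apriori} applied to both \eqref{SystemSolutionU} and \eqref{SystemSolutionPi}: this yields uniform control of $u$ in $L^\infty_t H\cap L^2_t H^1$ together with uniform $q$-variation bounds on $u^{P,\natural},u^{Q,\natural}$ in $H^{-3},H^{-3}_\perp$. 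Aubin--Lions in negative Sobolev spaces then gives strong convergence along subsequences, Helly's theorem handles $E$, and the Euler--Taylor expansion passes to the limit. \textbf{Shift and concatenation closure}: if $(u,E)\in\mathcal{U}(u_0,E_0,\mathbf Y)$ and $t_0\ge 0$, then the shifted trajectory belongs to $\mathcal{U}(u_{t_0},E_{t_0},\mathbf Y(t_0+\cdot))$, where $\mathbf Y(t_0+\cdot)$ is the rough-path-shifted enhancement obtained from Chen's relation \eqref{chen}; two trajectories with matching traces at $t_0$ can be concatenated, the algebraic consistency at the junction being ensured by Chen's relation applied to the URD from Section~\ref{sec:diff}.

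With these three ingredients in hand, I would run Krylov's iterative minimization. Fix a countable family $\{f_n\}$ of bounded continuous functionals on $\mathcal{T}$ separating points together with a countable dense set $\{\lambda_m\}\subset (0,\infty)$, and define inductively
\[
\mathcal{U}_{n,m}(u_0,E_0,\mathbf Y):=\Big\{(u,E)\in \mathcal{U}_{n-1,m}(u_0,E_0,\mathbf Y):\;\int_0^\infty e^{-\lambda_m t}f_n(u,E)_t\,dt\text{ is minimal}\Big\}.
\]
Lower semicontinuity of each functional together with the compactness above keeps every $\mathcal{U}_{n,m}$ non-empty, compact, and shift/concatenation-closed; the intersection is a single-valued Borel map $U$ by Kuratowski--Ryll-Nardzewski, and the semiflow property $U(u_0,E_0,\mathbf Y)(t_1+t_2)=U(U(u_0,E_0,\mathbf Y)(t_1),\mathbf Y(t_1+\cdot))(t_2)$ follows because the value function of each minimization is preserved under time-shift.

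For the random dynamical system part, assuming a perfected version of the cocycle identity $\mathbf Y(\omega)(t_1+\cdot)=\mathbf Y(\theta_{t_1}\omega)(\cdot)$ on a $\theta$-invariant set of full measure (which for the standard Brownian rough path lift follows from the Wong--Zakai-type results invoked in the preceding sections together with a perfection procedure \`a la Arnold), the cocycle property of $\Phi(t,\omega,u_0,E_0):=U(u_0,E_0,\mathbf Y(\omega))(t)$ is immediate from the deterministic semiflow property applied $\omega$-wise; joint Borel measurability of $\Phi$ is inherited from the measurability of $U$ and of $\omega\mapsto\mathbf Y(\omega)$. The main obstacle I expect is the compactness step: the $q$-variation bounds on the remainders must be propagated uniformly along approximating sequences while simultaneously passing to the limit in the quadratic nonlinearity and in the second-level term $A^{2}_{st}u_s$, which requires a stability analysis of the URD formalism under weak-$\ast$ limits in rough-path topology; once this stability is secured the remaining selection and measurability steps are structural.
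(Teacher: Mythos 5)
Your proposal reconstructs, essentially correctly, the approach that the paper only sketches; the statement in the paper is given without proof, as is typical for a survey, but the surrounding discussion (the energy--sink obstruction, the augmentation of the solution by an auxiliary variable $E$, the rough-path cocycle hypothesis) pins down exactly the strategy you describe. The phase-space augmentation $(u_0,E_0)$, the Krylov-style nested minimization over a countable separating family of functionals, the three structural properties (non-emptiness via the existence theorem, sequential compactness via the a priori estimates of Proposition~\ref{pro:apriori} together with an Aubin--Lions argument, and shift/concatenation closure via Chen's relations for the time-shifted URD), and finally the deterministic-pathwise-to-cocycle upgrade via a perfected rough-path cocycle are all the right ingredients, and they match the framework used in the literature this theorem draws on.

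Two technical remarks you may wish to sharpen. First, in Krylov's scheme the nested minimization over a countable family $\{(f_n,\lambda_m)\}$ that separates points already forces the intersection $\bigcap_{n,m}\mathcal{U}_{n,m}(u_0,E_0,\mathbf Y)$ to be a singleton; one does not invoke Kuratowski--Ryll-Nardzewski at that stage. What does require care is the \emph{Borel measurability of the resulting selection map} $(u_0,E_0,\mathbf Y)\mapsto(u,E)$, which one obtains by showing each value function $\inf_{(u,E)}\int_0^\infty e^{-\lambda_m t}f_n(u,E)_t\,dt$ is Borel in the data (this typically uses the compactness plus upper hemicontinuity of the multivalued solution map). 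Second, you correctly flag the compactness/stability step as the crux: passing to the limit in the Euler--Taylor expansion \eqref{SystemSolutionU}--\eqref{SystemSolutionPi} requires showing that $q$-variation bounds on $u^{P,\natural},u^{Q,\natural}$ are stable along weak limits and that $A^2_{st}u_s$ converges, and this is precisely where the URD estimates are essential; in the literature this is carried out using the rough Gronwall and remainder estimates in the scale from Section~\ref{sec:diff}, and your outline is consistent with that route. With those adjustments the argument is sound.
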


Let us finally note that the framework of URD was also used in \cite{DH23} to establish the convergence of a slow-fast system of coupled three dimensional Navier-Stokes equations where the fast component is perturbed by an additive $Q$-Wiener process $W$
\begin{align} \label{eq:system_NS}
\begin{cases}
du^\epsilon +(u^{\epsilon}+v^{\epsilon})\cdot u^{\epsilon} dt +\nabla p^{\epsilon}dt
= 
\nu \Delta u^\epsilon dt,
\\
\mbox{div}\, u^\epsilon  = 0,
\\
dv^\epsilon +(u^{\epsilon}+v^{\epsilon})\cdot v^{\epsilon} dt +\nabla q^{\epsilon}dt
= 
\epsilon^{-1} C v^\epsilon dt
+
\nu \Delta v^\epsilon dt
+\epsilon^{-1}
d{W}
\\
\mbox{div}\, v^\epsilon = 0.
\end{cases}
\end{align}
It was shown that the slow component $u^{\varepsilon}$ converges in law  towards a Navier-Stokes system
  with an It{\^o}-Stokes drift and a rough path driven transport noise. Interestingly, the limit driving rough path is identified as a geometric rough
  path which does not necessarily coincide with the Stratonovich lift of the
  Brownian motion.

\section{Euler equations}
\label{sec:Euler}

The work on the inviscid version of \eqref{nse:3}, i.e. Euler's equation, was initiated in \cite{CHLN1} where a general variational principle for fluid equations driven by rough paths is introduced. In \cite{CHLN1}, the main objective was to find a (rough) PDE which preserve a set of desired physical properties (known to hold for deterministic Euler) and for which the Lagrangian trajectories have the decomposition
\begin{equation} \label{eq:rough lagrangian}
d\phi_t(x) = u_t(\phi_t(x)) dt + \sigma_{\mu}(\phi_t(x)) d \mathbf{Y}^{\mu}_t , \qquad \phi_0(x) = x.
\end{equation}
The approach is motivated by Vladimir Arnold \cite{Arnold66} to study the Lagrangian trajectories of (deterministic) Euler's equation as paths with values in the diffeomorphism group
$$
\mathcal{D}_{\textrm{Vol}}(M) = \{ \psi : M \rightarrow M | \, \psi \textrm{ is a volume preserving diffeomorphism} \}
$$
where $M$ is a compact Riemannian manifold with volume form $d\textrm{Vol}(x)$. 
More precisely, the above is an infinite dimensional manifold whose tangent spaces can be described as
$$
T_{\psi} \mathcal{D}_{\textrm{Vol}}(M) = \{ X : M \rightarrow TM \, \big| \pi \circ X = \psi, \text{ and } \, \text{div}(X) = 0 \}
$$
If one equip this set with the (weak) Riemannian metric 
$$
\langle \langle X, Y \rangle \rangle_{\psi} = \int_{M} \langle X(x), Y(x) \rangle_{\psi(x)} d\textrm{Vol}(x),
$$
the insight of Arnold was that solutions of Euler's equation is (formally) in one-to-one correspondence with \emph{geodesics}  on the infinite-dimensional manifold. 

Starting from this observation, it is shown in \cite{CHLN1} that if one introduces the action functional
\begin{equation} \label{action functional}
S(u,\phi,\lambda) =  \frac12 \int_0^T \langle u_t, u_t \rangle_{L^2} dt  + \int_0^T \langle \lambda_t, d (\phi_t \circ \phi_t^{-1}) - u_t dt - \sigma_{\mu} d\mathbf{Y}_t^{\mu} \rangle_{L^2}
\end{equation}
(see \cite{CHLN1} for precise definition of this functional), then the critical points of $S$ satisfies the rough PDE 
\begin{align}\label{eq:euler}
\left\{
\begin{aligned}
du+(u\cdot\nabla u+\nabla p)d t +  (\sigma_{\mu}\cdot\nabla u + \nabla\sigma_{\mu}u )d\mathbf Y^{\mu}  &= 0 ,\qquad \text{on }[0,T]\times M,\\
{\rm div} u &= 0,
\end{aligned}
\right.
\end{align}
and its Lagrangian trajectories satisfies \eqref{eq:rough lagrangian}. In fact, in \eqref{action functional}, the term $\lambda$ should be thought of as a Lagrangian multiplier which forces the constraint \eqref{eq:rough lagrangian}. Moreover, several physical properties (Kelvin's circulation theorem, enstrophy balance) are at least formally preserved. 

\subsection{Smooth solutions}

In \cite{CHLN2} well-posedness of \eqref{eq:euler} is studied when $M = \mathbb{T}^d$ on the scale $(W^{k,2}(\mathbb{T}^d))$
and the initial data $u_0$ belongs to the space $H^{k}$ for $k \geq k_{\star} : = \lfloor \frac{d}{2} \rfloor + 2$. The relatively smooth initial data ensures that one can avoid the tensorization argument typically employed to show energy estimates - instead a direct testing of the solution against itself yields $L^2$ estimates.

\begin{theorem}[Local well-posedness]\label{thm:local_wp} 
Assume that $u_0\in H^k$ and $\sigma_{\mu} \in (W_{\text{div-free}}^{k+2})$ for all $\mu \in \{1, \dots, m\}$ for some $k \geq k_{\star}$.
Then there exists a unique maximally extended $H^{k}$-solution $u\in C_w([0,T_{\textnormal{max}});H^{k})\cap C^{p-\textnormal{var}}([0,T_{\textnormal{max}});H^{k-1})$ of \eqref{eq:euler}. on the interval $[0,T_{\textnormal{max}})$. The time $T_{\textnormal{max}}$ is uniquely specified by the property if 
$$
T_{\textnormal{max}}<\infty, \qquad \Longleftrightarrow \qquad \limsup_{t\uparrow T_{\textnormal{max}}} |u_t|_{H^{k_{\star}}}=\infty.
$$ 
\end{theorem}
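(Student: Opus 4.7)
The plan is to follow the classical local well-posedness paradigm for symmetric hyperbolic systems — smooth regularisation, uniform $H^{k}$ a priori estimate, compactness, $L^{2}$-uniqueness, and maximal extension with a Beale--Kato--Majda-type criterion — but with each step carried out in the rough framework of Section~3. First I would regularise the rough path by a sequence of smooth paths $Y(n)$ whose canonical lifts converge to $\mathbf Y$ in the $\alpha'$-rough path metric for some $\alpha'\in(\tfrac13,\alpha)$; for each $n$ the system \eqref{eq:euler} reduces to a classically forced Euler equation, which admits a local $H^{k}$-solution $u(n)$ by standard theory. The goal is then to produce a uniform existence time $T_{*}$ and uniform bounds.

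For the a priori estimate I would work on the scale $E_{\beta}=H^{k+\beta}$ and set up the URD corresponding to the pair $(u,\pi)$ with $\pi_{t}=\int_{0}^{t}\nabla p_{r}\,dr$, in the block structure of the Navier--Stokes paragraph of Section~\ref{sec:diff}; since $\sigma_{\mu}\in W^{k+2}_{\mathrm{div\text{-}free}}$, the operator $A^{1}_{st}=(\sigma_{\mu}\cdot\nabla+\nabla\sigma_{\mu})\,\delta Y^{\mu}_{st}$ together with its canonical second level defines an unbounded rough driver with $\sigma=1$ on $E_{\beta}$. Applying Proposition~\ref{pro:product} at the $k$-th derivative level (equivalently testing against $(I-\Delta)^{k}u$) I would derive the Euler--Taylor expansion
\begin{equation*}
\delta(\|u\|^{2}_{H^{k}})_{st}+2\int_{s}^{t}\langle u_{r}\cdot\nabla u_{r},u_{r}\rangle_{H^{k}}\,dr
=\big\langle[\tilde A^{1}_{st}+\tilde A^{2}_{st}](u^{2}_{s})\,,\,1\big\rangle+\langle u^{2,\natural}_{st},1\rangle,
\end{equation*}
in which the first right-hand term is $\lesssim(t-s)^{\alpha}\|u_{s}\|^{2}_{H^{k}}$ after integration by parts (using $\mathrm{div}\,\sigma_{\mu}=0$). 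The nonlinear drift is handled by the standard Kato--Ponce commutator inequality, giving a bound $\lesssim\int_{s}^{t}\|\nabla u_{r}\|_{L^{\infty}}\|u_{r}\|^{2}_{H^{k}}\,dr$, and the remainder is controlled by \eqref{estimate_remainder}:
\begin{equation*}
|\langle u^{2,\natural}_{st},1\rangle|\lesssim(t-s)^{3\alpha}\sup_{r\in[s,t]}\|u_{r}\|^{2}_{H^{k}}+(t-s)^{\alpha}\int_{s}^{t}\|u_{r}\|_{H^{k-1}}\|u_{r}\|_{H^{k}}\,dr,
\end{equation*}
which is admissible since $k-1>d/2$. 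A rough Gronwall argument (Lemma~\ref{lem:gronwall}) combined with the Sobolev embedding $H^{k_{\star}}\hookrightarrow W^{1,\infty}$ (valid since $k_{\star}>d/2+1$) closes the estimate on an interval $[0,T_{*}]$ with $T_{*}=T_{*}(\|u_{0}\|_{H^{k}},[\A]_{\alpha})>0$.

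Compactness in $C_{w}([0,T_{*}];H^{k})$ together with the continuity of the URD solution map then yields a limit $u\in C_{w}([0,T_{*}];H^{k})\cap C^{p\text{-var}}([0,T_{*}];H^{k-1})$ solving \eqref{eq:euler} in the sense of Definition~\ref{def:diff_notation}. For uniqueness I would take two solutions $u^{1},u^{2}$ with the same datum: the difference $v=u^{1}-u^{2}$ satisfies a linear rough equation with the same URD plus drift $v\cdot\nabla u^{1}+u^{2}\cdot\nabla v$ (the pressure drops upon testing against $v$ by incompressibility). An $L^{2}$ energy estimate along the lines of Section~\ref{sec:monotone} — product formula at $p=p'=2$, remainder estimate \eqref{estimate_remainder}, and $|\langle v\cdot\nabla u^{1},v\rangle|\lesssim\|\nabla u^{1}\|_{L^{\infty}}\|v\|^{2}_{L^{2}}$ — then forces $v\equiv 0$. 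Iteration extends the solution to a maximal interval $[0,T_{\max})$; the blow-up criterion follows because the Step~2 estimate only depends on $\int_{0}^{T}\|\nabla u_{r}\|_{L^{\infty}}\,dr$, which is dominated by $\int_{0}^{T}\|u_{r}\|_{H^{k_{\star}}}\,dr$, so finiteness of the latter would allow continuation past $T_{\max}$.

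The hard part is Step~2: setting up the pair-URD at level $H^{k}$ so that the Leray pressure contribution from the stretching term $\nabla\sigma_{\mu}u$ is correctly accounted for, and extracting the rough equation for $\|u\|^{2}_{H^{k}}$ in such a way that the remainder depends on $u$ only through $\|\nabla u\|_{L^{\infty}}$ (and hence through $\|u\|_{H^{k_{\star}}}$), rather than involving a top-order quantity that would preclude the Gronwall closure. This delicate bookkeeping, rather than any single analytic difficulty, is what ultimately delivers the Beale--Kato--Majda-style characterisation of $T_{\max}$.
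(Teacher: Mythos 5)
Your strategy — tensorize with Proposition~\ref{pro:product}, read off an Euler--Taylor expansion for $\|u\|_{H^k}^2$, close with rough Gronwall — is in fact the route the paper explicitly says is \emph{not} taken in \cite{CHLN2}. The text immediately preceding the theorem states that ``the relatively smooth initial data ensures that one can avoid the tensorization argument typically employed to show energy estimates --- instead a direct testing of the solution against itself yields $L^2$ estimates.'' Precisely because $k\geq k_\star=\lfloor d/2\rfloor+2$ makes the solution smooth enough to serve as its own test function in the Euler--Taylor expansion, \cite{CHLN2} tests \eqref{eq:euler} directly against $(I-\Delta)^k u$ (and the differentiated equations against $u$), and never passes to an equation for a pointwise product. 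So your proof takes a genuinely different, and harder, road than the one the theorem actually relies on.

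There is, moreover, a real gap in how you invoke Proposition~\ref{pro:product}. That proposition produces a rough equation for $uv$ only when $u$ and $v$ solve rough PDEs driven by the \emph{same} URD. Your application ``at the $k$-th derivative level'' implicitly requires $D^\gamma u$ (or $(I-\Delta)^{k/2}u$) to solve an equation driven by the same $\A$ as $u$, but it does not: neither $\sigma_\mu\cdot\nabla$ nor $\nabla\sigma_\mu$ commutes with $D^\gamma$, so $D^\gamma u$ satisfies a rough PDE with an extra rough \emph{forcing} $[D^\gamma,\sigma_\mu\cdot\nabla+\nabla\sigma_\mu]u\,d\mathbf Y^\mu$ that is not of the form $A^1(D^\gamma u)$. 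This is not a drift term (it sits against $d\mathbf Y^\mu$), so Proposition~\ref{pro:apriori}/\ref{pro:product} cannot absorb it without enlarging the URD, and your inequality
\[
\bigl|\langle u^{2,\natural}_{st},1\rangle\bigr|\lesssim(t-s)^{3\alpha}\sup_{r}\|u_r\|_{H^k}^2+(t-s)^\alpha\int_s^t\|u_r\|_{H^{k-1}}\|u_r\|_{H^k}\,dr
\]
is not justified as written. The same issue recurs for the Leray projection: Proposition~\ref{pro:product} is stated for scalar differential URDs of order $\le1$, not for the projected block system of Section~\ref{sec:diff}; in the $P$/$Q$ framework an additional argument is needed to pass the tensorization through $P$. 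You flag all of this as ``delicate bookkeeping,'' which is fair, but the bookkeeping is exactly what the proof has to supply, and it is exactly what \cite{CHLN2} sidesteps by not tensorizing: in the direct approach the rough commutator $[\,D^\gamma,A^1\,]u$ is treated as an ordinary controlled forcing at one regularity lower, absorbed into the remainder estimate \eqref{estimate_remainder} on the shifted scale, and the Gronwall closure depends on $u$ only through $\|\nabla u\|_{L^\infty}\lesssim\|u\|_{H^{k_\star}}$ as required for the BKM-type continuation criterion.

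The remaining scaffolding of your argument — smooth approximation of $\mathbf Y$, uniform $H^k$ bound on a time depending only on $\|u_0\|_{H^k}$ and $[\A]_\alpha$, compactness in $C_w(H^k)$, $L^2$ uniqueness for the difference equation, iterated extension to a maximal time — agrees with the standard pattern and with what \cite{CHLN2} does. The divergence is in the heart of the a priori estimate, and there your proposal substitutes a more complicated mechanism for a simpler one that the high regularity $k\geq k_\star$ was chosen specifically to enable.
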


We note that the above maximal time $T_{\max}$ depends only on the norm of the solution in the $H^{k_{\star}}$-norm, which shows that the solution cannot be extended in time by choosing more regular initial condition. This results is sometimes referred to as a ``no loss/no gain" result.

In addition, a more precise description of the blow-up in terms of the vorticity $\xi = \nabla \times u$, known as the Beale-Kato-Majda criterion, was proved.

\begin{theorem}[BKM blow-up criterion]\label{thm:BKM}
Assume that $u_0\in H^k$ and  $\sigma_{\mu} \in W_{\text{div-free}}^{k_{\star}+4,\infty}$ if $m=m_*$ and $\sigma_{\mu} \in W_{\text{div-free}}^{k+2}$ if $k>k_{\star}$.  
Then 
\begin{equation} \label{BKM}
T_{\textnormal{max}}<\infty \qquad \Longleftrightarrow \qquad \int_0^{T_{\textnormal{max}}} |\xi_t|_{L^\infty} \,d t =\infty.
\end{equation}
\end{theorem}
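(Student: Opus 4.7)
The plan is to follow the classical Beale--Kato--Majda argument, adapted to the URD framework developed in the preceding sections. The substantive implication is the direct one: if $T_{\textnormal{max}}<\infty$, then $\int_0^{T_{\textnormal{max}}}|\xi_t|_{L^\infty}\,dt=\infty$. Its contrapositive states that whenever the $L^\infty$-integral of vorticity is finite on $[0,T^\star]$, then $\sup_{t\le T^\star}|u_t|_{H^{k_\star}}<\infty$, which by the blow-up characterization of Theorem~\ref{thm:local_wp} forces $T_{\textnormal{max}}>T^\star$. The reverse implication of \eqref{BKM} is immediate since global existence trivially implies $\int_0^{T_{\textnormal{max}}}=\int_0^\infty$ in the convention of the statement.

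First I would work on the vorticity level using \eqref{nse:2} with $\nu=0$. Applying $D^{k_\star-1}$ to the equation and testing with $D^{k_\star-1}\xi$ via the product formula (Proposition~\ref{pro:product}) and the remainder estimate (Proposition~\ref{pro:apriori}) yields an Euler--Taylor expansion for $t\mapsto|\xi_t|_{H^{k_\star-1}}^2$ of the same shape as \eqref{ineq:u2}, namely
\[
\delta(|\xi|_{H^{k_\star-1}}^2)_{st}+2\int_s^t\langle\mathrm{Comm}_r\,,\,D^{k_\star-1}\xi_r\rangle\,dr\les|\xi|^2_{H^{k_\star-1}}(t-s)^\alpha+|\langle\xi^{2,\natural}_{st},1\rangle|\,,
\]
where $\mathrm{Comm}_r$ gathers the commutators $[D^{k_\star-1},u_r\cdot\nabla]\xi_r-[D^{k_\star-1},\xi_r\cdot\nabla]u_r$ coming from the nonlinear drift. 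Standard Kato--Ponce/Moser-type estimates control these commutators by $\|\nabla u\|_{L^\infty}|\xi|^2_{H^{k_\star-1}}$, exactly as in the deterministic Euler theory.

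Next I would invoke the logarithmic Sobolev-type inequality of Beale--Kato--Majda,
\[
\|\nabla u\|_{L^\infty}\les 1+|\xi|_{L^\infty}\bigl(1+\log^+|u|_{H^{k_\star}}\bigr)\,,
\]
valid in dimensions $d=2,3$. Combining this with the previous estimate and controlling $\xi^{2,\natural}$ via \eqref{estimate_remainder} in terms of $[\A]_\alpha$ and $\sup|\xi|_{H^{k_\star-1}}$ reduces matters to a functional inequality of the shape
\[
y(t)\le y(0)+C\int_0^t|\xi_s|_{L^\infty}\,y(s)\log\!\bigl(e+y(s)\bigr)\,ds+C\,\Phi_{\Y,\sigma}(t)\,,\qquad y(t):=|u_t|_{H^{k_\star}}^2\,,
\]
where $\Phi_{\Y,\sigma}$ collects the URD-remainder contributions over $[0,t]$. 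An Osgood/log-Grönwall argument applied on short subintervals of length controlled by $[\A]_\alpha$ and then iterated across a partition of $[0,T^\star]$ then produces a double-exponential bound
\[
|u_t|_{H^{k_\star}}\le C_1\exp\exp\!\Bigl(C_2\int_0^t|\xi_s|_{L^\infty}\,ds\Bigr)
\]
uniformly on $[0,T_{\textnormal{max}})$.

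The hard part will be ensuring that the rough noise remainder does not contribute terms growing faster than the Osgood criterion can absorb. Here I rely on the affine-linearity of the noise in $u$ together with the smoothness of $\sigma_\mu$, so that the remainder bound \eqref{estimate_remainder} is controlled by $(t-s)^\alpha\,|u|_{L^\infty(s,t;H^{k_\star})}$ plus a harmless drift-type contribution on a sufficiently short window. Partitioning $[0,T^\star]$ into subintervals of mesh $\sim[\A]_\alpha^{-1/\alpha}$ and patching the log-Grönwall estimates across the partition yields the global bound on any interval $[0,T^\star]$ on which $\int_0^{T^\star}|\xi_t|_{L^\infty}\,dt<\infty$, contradicting the blow-up characterization of Theorem~\ref{thm:local_wp} and thereby establishing \eqref{BKM}.
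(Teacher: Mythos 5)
Your overall strategy---working at the vorticity level, testing against $D^{k_\star-1}\xi$, invoking Kato--Ponce commutator estimates, the Beale--Kato--Majda logarithmic Sobolev inequality, and an Osgood/log-Gr\"onwall argument patched across a partition controlled by $[\A]_\alpha$---is the natural adaptation of the classical BKM scheme to the URD framework and follows the spirit of what the cited work does. However, there are a few concrete gaps that you should address before the argument is complete.

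First, the combination of the rough Gr\"onwall mechanism (Lemma~\ref{lem:gronwall}) with the Osgood log-Gr\"onwall needs to be made precise rather than left as the acknowledged ``hard part.'' The remainder bound \eqref{estimate_remainder} involves $\sup_{[s,t]}|u|_{H^{k_\star}}$ \emph{linearly}, while the commutator contribution feeds $y\log y$ into the drift. These are different nonlinearities and absorbing the linear remainder on short windows of length $\sim [\A]_\alpha^{-1/\alpha}$ produces an extra multiplicative constant at each iteration; you need to exhibit that the cumulative effect over the (finitely many but a priori unknown number of) subintervals still closes under the Osgood integrability hypothesis $\int|\xi|_{L^\infty}\,dt<\infty$. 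As written, $\Phi_{\Y,\sigma}$ is presented as a fixed additive quantity, but it is not; spelling out the self-improving scheme is where the work lies.

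Second, the theorem statement deliberately distinguishes the critical case $k=k_\star$ (demanding $\sigma_\mu\in W^{k_\star+4,\infty}$) from the sub-critical case $k>k_\star$ (only $\sigma_\mu\in W^{k+2}$). Your proposal runs a single uniform argument and never explains where the extra four derivatives and the $L^\infty$ integrability are consumed; at the critical level the direct-testing shortcut that the paper uses for Theorem~\ref{thm:local_wp} has no room to spare, and the remainder estimates must land exactly in $H^{-3}$ without help from extra regularity, which is precisely why the hypothesis on $\sigma$ strengthens. This case split should be visible in your proof.

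Third, for $d=3$ the vorticity equation is a vector-valued system with the stretching terms $\xi\cdot\nabla u$ and $\xi\cdot\nabla\sigma_\mu$, so Proposition~\ref{pro:product} as stated (scalar case) does not directly apply; you need the system analogue, which the paper only alludes to. Finally, your dispatch of the reverse implication is imprecise: as stated \eqref{BKM} is a biconditional, and ``global existence implies the integral is over $[0,\infty)$'' does not by itself establish $\int_0^{T_{\max}}|\xi|_{L^\infty}\,dt=\infty\Rightarrow T_{\max}<\infty$; the operative content is precisely the contrapositive of the forward implication that you prove, and this should be said plainly rather than presented as an independent ``immediate'' direction.
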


When $d=2$ the vorticity equation takes the form of a scalar transport equation
\begin{equation} \label{eq:vorticity 2d}
d \xi_t + u_t \cdot \nabla \xi_t dt  + \sigma_{\mu} \cdot \nabla \xi_t d\mathbf{Y}^{\mu}_t = 0.
\end{equation}
Now, since $u$ solves \eqref{eq:euler} it is divergence free. By assumption $\sigma_{\mu}$ is divergence free for every $\mu \in \{1,2 , \dots, m\}$ and so we expect \eqref{eq:vorticity 2d} to be a conservation law. In fact, in \cite{CHLN2} it is shown that if we assume $\xi_0 \in L^{\infty}$ we have 
\begin{equation}\label{eq:Lp conservation}
|\xi_t|_{L^p}=|\xi_0|_{L^p}.
\end{equation}
for all $p \in [2,\infty]$. From this and \eqref{BKM} we obtain the following corollary.

\begin{corollary} \label{cor:2d wp}
Assume $d=2$, $\xi_0 \in L^{\infty}$, $u_0\in H^k$ and  $\sigma_{\mu} \in W_{\text{div-free}}^{k_{\star}+4,\infty}$ if $m=m_*$ and $\sigma_{\mu} \in W_{\text{div-free}}^{k+2}$ if $k>k_{\star}$. Then $T_{\max} = \infty$. 
\end{corollary}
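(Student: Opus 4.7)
The plan is to combine the Beale--Kato--Majda criterion of Theorem \ref{thm:BKM} with the $L^\infty$ conservation \eqref{eq:Lp conservation} for the vorticity, and to conclude by a short contradiction argument. By Theorem \ref{thm:local_wp}, there exists a unique maximal $H^k$-solution on some interval $[0,T_{\textnormal{max}})$, so the only thing that remains is to rule out $T_{\textnormal{max}}<\infty$. Suppose for contradiction that $T_{\textnormal{max}}<\infty$. Then by \eqref{BKM},
\[
\int_0^{T_{\textnormal{max}}} |\xi_t|_{L^\infty}\,dt = \infty.
\]

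The second step is to use the two-dimensional structure. Since $d=2$, the vorticity $\xi=\nabla\times u$ is scalar and satisfies the pure transport equation \eqref{eq:vorticity 2d}, with both $u$ and each $\sigma_\mu$ divergence-free. Hence \eqref{eq:Lp conservation} applies for every $p\in[2,\infty]$, and in particular with $p=\infty$ yields $|\xi_t|_{L^\infty}=|\xi_0|_{L^\infty}$ for all $t\in[0,T_{\textnormal{max}})$. Since $\xi_0\in L^\infty$ by hypothesis, this bound is finite and uniform in $t$, and therefore
\[
\int_0^{T_{\textnormal{max}}} |\xi_t|_{L^\infty}\,dt = T_{\textnormal{max}}\,|\xi_0|_{L^\infty} < \infty,
\]
contradicting the previous display. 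We conclude $T_{\textnormal{max}}=\infty$.

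The substantive content is therefore not in this one-line contradiction but in the two ingredients it relies on, both proved in \cite{CHLN2}: the BKM criterion of Theorem \ref{thm:BKM} and the $L^p$ conservation \eqref{eq:Lp conservation} for the rough scalar transport equation \eqref{eq:vorticity 2d}. The likely obstacle there is propagation of the $L^\infty$ bound on $\xi$ up to $T_{\textnormal{max}}$, since the velocity $u$ only has finite Sobolev regularity and the driver is rough; one expects this to follow from a renormalisation/chain rule argument for the URD-transport equation in the spirit of \eqref{ito_intro} (Theorem \ref{thm:HH17}), combined with approximations that preserve the $L^p$ norm. Granted these, the passage from local to global well-posedness in 2D is immediate as above.
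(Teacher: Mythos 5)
Your proof is correct and follows exactly the paper's approach: combine the $L^\infty$ conservation of vorticity \eqref{eq:Lp conservation} (which makes $\int_0^{T_{\max}}|\xi_t|_{L^\infty}\,dt$ finite whenever $T_{\max}<\infty$) with the Beale--Kato--Majda criterion \eqref{BKM} to force $T_{\max}=\infty$. The additional commentary on where the real work lies (the BKM criterion and the propagation of the $L^\infty$ bound in \cite{CHLN2}) is accurate but not part of the corollary's proof.
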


\subsection{Yudovich solutions}

Corollary \ref{cor:2d wp} and \eqref{eq:Lp conservation} hints at the possibility of a rough path analogue of Yudovich theory, i.e. well-posedness of solutions of \eqref{eq:vorticity 2d} in the class of $L^{\infty}$. This was first proved in \cite{roveri} and 
the main result is summarized in the following theorem.

\begin{theorem}
Assume $\xi_0 \in L^{\infty}(\T^2)$ and that $\sigma_{\mu} \in C^3_b$ and $\text{div}\sigma_{\mu} = 0$ for $\mu \in \{1,\dots,m\}$. Then \eqref{eq:vorticity 2d} is well posed in the class of bounded $\xi: [0,T] \times \T^2 \rightarrow \T^2$
Moreover, the solution admits a Lagrangian representation, namely $\xi_t = (\phi_t)_{\sharp} \xi_0$ where 
$$
d\phi_t(x) = u_t(\phi_t(x)) dt + \sigma_{\mu}(\phi_t(x)) d\mathbf{Y}_t^{\mu}.
$$
Finally, the solution depends continuously on the data $(\xi_0,\sigma,\mathbf{Y}) \in (L^{\infty},(C_b^3)^m,\mathscr{C}^{\alpha}_g) $ as an $(L^{\infty})_{w^{\ast}}$-valued map .
\end{theorem}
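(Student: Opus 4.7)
The plan is to mimic the classical Yudovich argument (existence by regularization, uniqueness via a Lagrangian representation) adapted to the rough setting. First, for existence, I would approximate the data: take smooth $\xi_0^n$ with $\xi_0^n \to \xi_0$ in $L^p(\mathbb T^2)$ for every $p<\infty$ and weakly$^*$ in $L^\infty$, reconstruct $u_0^n$ by Biot-Savart, and approximate $(\sigma,\mathbf Y)$ by smooth versions. By Corollary \ref{cor:2d wp}, each approximating problem admits a global smooth solution $\xi^n$ for which \eqref{eq:Lp conservation} yields the uniform bound $|\xi^n_t|_{L^p}=|\xi^n_0|_{L^p}$ for all $p\in [2,\infty]$. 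Combined with the Biot-Savart bound $|u^n|_{L^2}\lesssim |\xi^n|_{L^2}$ and an application of Proposition \ref{pro:apriori} to the vorticity equation \eqref{eq:vorticity 2d} (viewed as a transport-type RPDE whose drift $u\cdot\nabla\xi$ is controlled in an appropriate negative Sobolev space), this gives uniform $p$-variation bounds on $\delta\xi^n$ and on the remainder $\xi^{n,\natural}$ in a sufficiently negative space. A compactness argument in the spirit of \cite{hofmanova2019navier} then extracts a subsequence converging weakly$^*$ in $L^\infty$ and strongly enough in $u$ (via the smoothing effect of Biot-Savart on $L^p$) to pass to the limit in the nonlinear term $u\cdot\nabla\xi$; the rough terms pass to the limit because of the continuity of the Euler--Taylor expansion \eqref{euler-taylor_Q} with respect to the rough driver.

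For uniqueness I would go through the Lagrangian representation. A key input is the classical Yudovich log-Lipschitz bound for Biot-Savart: since $\xi\in L^\infty(\mathbb T^2)$, the velocity $u=K\ast\xi$ satisfies
\begin{equation*}
|u_t(x)-u_t(y)|\;\lesssim\;|\xi_t|_{L^\infty}\,\omega(|x-y|),\qquad \omega(r):=r(1+|\log r|_+),
\end{equation*}
uniformly in $t$. Combined with the fact that $\sigma_\mu\in C^3_b$, this is exactly the regularity one needs to construct a unique rough flow $\phi_t$ solving
\begin{equation*}
d\phi_t(x)=u_t(\phi_t(x))\,dt+\sigma_\mu(\phi_t(x))\,d\mathbf Y^\mu_t,\qquad \phi_0(x)=x,
\end{equation*}
via a Picard iteration in which Gronwall is replaced by Osgood's inequality (the rough part is handled by standard estimates for RDEs with $C^3_b$ coefficients, and the drift part by the Osgood modulus $\omega$). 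The divergence-free constraint on both $u$ and $\sigma_\mu$ then guarantees that $\phi_t$ is measure preserving.

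Given the flow, the second step is the Lagrangian identification $\xi_t=(\phi_t)_\sharp \xi_0$. I would prove this by testing \eqref{eq:vorticity 2d} against $\varphi\circ\phi_t^{-1}$ for smooth $\varphi$, using a rough chain rule in the spirit of Theorem \ref{thm:HH17} (equation \eqref{ito_intro}) applied to the composition $\xi_t\circ\phi_t$ with, say, a mollification and passage to the limit to justify the computation at the $L^\infty$ level. The cancellation of the rough and drift terms produced by the flow equation yields $\frac{d}{dt}\langle\xi_t,\varphi\circ\phi_t^{-1}\rangle=0$, hence $\xi_t=(\phi_t)_\sharp\xi_0$. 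Uniqueness follows: if $\xi^1,\xi^2$ are two solutions, they are both pushforwards by the flows $\phi^1,\phi^2$ associated with the Biot-Savart-reconstructed velocities $u^1,u^2$; a quantitative stability estimate for rough flows with log-Lipschitz drift (Osgood-type, applied to $|\phi^1_t-\phi^2_t|_{L^2}$) forces $\phi^1=\phi^2$ and hence $\xi^1=\xi^2$.

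\paragraph{Main obstacle and continuous dependence.}
The hardest step is the rigorous Lagrangian identification: one must apply the rough chain rule to a merely $L^\infty$ solution composed with a flow whose drift is only log-Lipschitz, which is outside the regime of Theorem \ref{thm:HH17} as stated. The natural route is a double approximation, first mollifying $\xi$ in space (which forces a commutator error $[u\cdot\nabla,\rho_\varepsilon\ast]\xi$ that can be estimated by DiPerna--Lions type commutator lemmas using only $u\in W^{1,p}$ for all $p<\infty$) and then passing to the limit in $\varepsilon$; the rough remainder produced at each regularization has to be controlled uniformly using Proposition \ref{pro:apriori}. Once existence, uniqueness and the Lagrangian representation are in hand, continuous dependence in the $(L^\infty)_{w^*}$ topology follows from the continuity of the flow map $(\sigma,\mathbf Y,u)\mapsto\phi$ (standard for RDEs with $C^3_b$ coefficients and log-Lipschitz drift), combined with the stability of $u$ in strong $L^p$ with respect to weak$^*$ perturbations of $\xi_0$ in $L^\infty$ via the Biot-Savart kernel.
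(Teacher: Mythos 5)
The paper does not reproduce the proof — it is a survey and attributes this theorem to \cite{roveri} — but the remarks following the statement outline exactly the ingredients you use: log-Lipschitz regularity of the Biot--Savart velocity, Osgood-type estimates to build a well-posed rough (RDE) flow, and the Lagrangian pushforward representation $\xi_t=(\phi_t)_\sharp\xi_0$ as the engine for uniqueness and weak$^{*}$ stability. Your proposal is consistent with this route, including the correct identification of the rigorous Lagrangian identification for merely $L^\infty$ vorticity (via mollification and commutator estimates) as the hardest technical step.
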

Let us remark that in the stability result, the initial condition $\xi_0$ is taken from $L^{\infty}$ with the strong topology, whereas the solution $\xi$ takes values in $L^{\infty}$ equipped with the weak topology.
Furthermore, let us remark on the above Lagrangian representation; the boundedness of $\xi$ is enough to ensure that the velocity $u = (\text{curl})^{-1} \xi$ is log-Lipschitz, viz
$$
|u_t(x) - u_t(\tilde{x})| \leq h(|x - \tilde{x}|)
$$
where $h(r) = r(1-\log(r))1_{[0, \frac{1}{e})}(r) + (r + \frac{1}{e}) 1_{[\frac{1}{e},\infty)}(r)$. In the deterministic theory it is well-known that log-Lipschitz vector fields is enough to guarantee a well-posed induced flow. In \cite{roveri} it is shown that this can also be extended to the RDE setting. 

Simultaneously and independently, the paper \cite{GLN} introduced a more general class of non-linear continuity equations on the form
\begin{equation} \label{eq:continuity}
d \xi_t + \text{div}( (K \ast \xi_t)  \xi_t) dt  +\text{div}( \sigma_{\mu} \xi_t) d\mathbf{Y}^{\mu}_t = 0, \qquad  \xi|_{t=0} \in L^1(\R^d) \cap L^{\infty}(\R^d).
\end{equation}
The equation is on the (technically more challenging) underlying state space to be $\R^d$. Above $K$ is a suitable convolution kernel which in particular has the smoothing effect
$$
|K \ast f(x) - K \ast f(\tilde{x})| \leq h(|x - \tilde{x}|) (\|f\|_{L^1(\R^d)} + \|f\|_{L^{\infty}(\R^d)})
$$
for an \emph{Osgood} modulus of continuity $h$, i.e. a function $h:[0,\infty) \rightarrow [0,\infty)$ which is increasing, subadditive and such that 
$$
\int_0^{\epsilon}  \frac{1}{h(r)} dr = \infty.
$$
In particular, when $d=2$, the Biot-Savart kernel satisfies these assumptions, so \eqref{eq:continuity} covers the Euler's equation in vorticity form on $\mathbb{R}^2$. 
The main result of \cite{GLN} is as follows.
\begin{theorem}
Assume $\xi_0 \in L^1(\R^d) \cap L^{\infty}(\R^d)$ and that $\sigma_{\mu} \in C^3_b$ and $\text{div}\sigma_{\mu} = 0$ for $\mu \in \{1,\dots,m\}$. Then \eqref{eq:continuity} is well posed in the class of $\xi \in L^{\infty}( [0,T]; L^1(\R^d) \cap L^{\infty}(\R^d))$
Moreover, the solution admits a Lagrangian representation
$$
d\phi_t(x) = u_t(\phi_t(x)) dt + \sigma_{\mu}(\phi_t(x)) d\mathbf{Y}_t^{\mu}.
$$
where $u_t = K \ast \xi_t$. 
Finally, on bounded sets of $L^1(\R^d) \cap L^{\infty}(\R^d)$, we have sequential stability w.r.t. the initial data in weak and strong topologies.
\end{theorem}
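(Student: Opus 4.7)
The plan is to adapt Yudovich's argument to the rough setting by exploiting the Osgood modulus $h$ carried by $K$. The proof splits into three stages: (i) constructing a nonlinear solution map via a Lagrangian flow, (ii) a Bihari--Osgood inequality for uniqueness, and (iii) a compactness/continuity argument for stability.

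First, given a candidate $\xi \in L^\infty([0,T]; L^1(\R^d) \cap L^\infty(\R^d))$ with $\|\xi_t\|_{L^1 \cap L^\infty} \leq M$, I set $u_t := K \ast \xi_t$; by the hypotheses on $K$ the velocity $u$ is divergence-free and obeys $|u_t(x) - u_t(\tilde x)| \leq M\, h(|x - \tilde x|)$ uniformly in $t$. Together with $\sigma_\mu \in C^3_b$ divergence-free, I would solve the rough ODE
\[
d\phi_t(x) = u_t(\phi_t(x))\,dt + \sigma_\mu(\phi_t(x))\,d\mathbf{Y}^\mu_t, \qquad \phi_0(x) = x,
\]
to obtain a unique Lebesgue-preserving flow $\phi^\xi$. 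The well-posedness of this rough ODE with an Osgood drift is the preliminary building block: existence follows from a Peano-type compactness on smooth approximations, while uniqueness is already an Osgood argument for the Lagrangian difference, the rough contribution $\sigma_\mu(\phi^1) - \sigma_\mu(\phi^2)$ being genuinely Lipschitz. I then define the nonlinear map $\Xi[\xi]_t := (\phi^\xi_t)_\sharp \xi_0$. Measure preservation gives $\|\Xi[\xi]_t\|_{L^p} = \|\xi_0\|_{L^p}$ for all $p \in [1,\infty]$, so $\Xi$ stabilizes a bounded ball; a Schauder-type fixed point in a weaker topology yields a Lagrangian solution $\xi = \Xi[\xi]$. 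Verifying that this $\xi$ solves \eqref{eq:continuity} in the sense of the paper uses the chain rule/product formula of Proposition~\ref{pro:product} applied to the rough ODE solved by $\phi$.

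The hard part is uniqueness, since the log-Lipschitz velocity does not permit a Lipschitz comparison. Given two solutions $\xi^1, \xi^2$ with associated flows $\phi^1, \phi^2$, I would set
\[
\Psi(t) := \int_{\R^d} \bigl(|\phi^1_t(x) - \phi^2_t(x)| \wedge 1\bigr)\, |\xi_0(x)|\,dx
\]
and decompose $u^1_t(\phi^1_t) - u^2_t(\phi^2_t) = (K \ast (\xi^1_t - \xi^2_t))(\phi^1_t) + \bigl(u^2_t(\phi^1_t) - u^2_t(\phi^2_t)\bigr)$. The first piece is controlled by $\|\xi^1_t - \xi^2_t\|_{L^1} \leq \Psi(t)$ through the Lagrangian representation, and the second by $M\, h(|\phi^1_t - \phi^2_t|)$. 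The rough contribution $\sigma_\mu(\phi^1_t) - \sigma_\mu(\phi^2_t)$ is Lipschitz in $\phi^1 - \phi^2$ and is handled via the sewing lemma together with Proposition~\ref{pro:apriori}. Collecting these estimates yields an inequality of the form
\[
\Psi(t) \leq C\int_0^t \omega(\Psi(s))\,ds
\]
with $\int_0^\epsilon \frac{dr}{\omega(r)} = \infty$, so $\Psi \equiv 0$ by the Bihari--Osgood lemma, forcing $\phi^1 = \phi^2$ and hence $\xi^1 = \xi^2$.

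Sequential stability follows by rerunning the same machinery with a perturbation of the initial datum. If $\xi^n_0 \to \xi_0$ strongly in $L^1 \cap L^\infty$, the corresponding solutions $\xi^n$ are uniformly bounded and their flows $\phi^n$ form a precompact family thanks to continuity of the rough flow in its data. Any limit point is a solution by passage to the limit in the rough ODE and in the push-forward, and uniqueness identifies it with the solution driven by $\xi_0$. The same Bihari--Osgood estimate, augmented by a term bounding $\|\xi^n_0 - \xi_0\|$, upgrades this to quantitative continuous dependence on bounded sets, in both the weak and strong topologies claimed.
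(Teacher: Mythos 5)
Your high-level architecture -- a Lagrangian fixed-point for existence, an Osgood/Bihari estimate for uniqueness, and compactness plus uniqueness for stability -- is the right family of ideas, but the core of the uniqueness step contains a real error that would sink the argument.

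You set $\Psi(t) = \int (|\phi^1_t - \phi^2_t| \wedge 1)\,|\xi_0|\,dx$ and claim $\|\xi^1_t - \xi^2_t\|_{L^1} \leq \Psi(t)$, then bound the first piece $(K \ast (\xi^1_t - \xi^2_t))(\phi^1_t)$ by $\|\xi^1_t - \xi^2_t\|_{L^1}$. Both steps fail. For the first, the $L^1$ distance between the two push-forwards $(\phi^1_t)_\sharp \xi_0$ and $(\phi^2_t)_\sharp \xi_0$ is \emph{not} controlled by $\int |\phi^1_t - \phi^2_t|\,|\xi_0|\,dx$: that quantity bounds a dual/negative-Sobolev norm $\|\xi^1_t - \xi^2_t\|_{W^{-1,\infty}}$ (pair against a Lipschitz test function and change variables), not the $L^1$ norm; the $L^1$ norm of a difference of transports can stay order one even when the maps are uniformly close. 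For the second, $K$ (e.g.\ the Biot--Savart kernel, $K(x) \sim |x|^{-(d-1)}$) is not in $L^\infty$, so $\|K\ast f\|_{L^\infty}$ is not controlled by $\|f\|_{L^1}$. The correct Yudovich-type closing argument (Marchioro--Pulvirenti/Loeper) avoids the $L^1$ norm altogether: one writes $u^1_t(y)-u^2_t(y) = \int [K(y-\phi^1_t(x)) - K(y-\phi^2_t(x))]\,\xi_0(x)\,dx$ using the push-forward structure and then invokes the Osgood modulus of $K$ to get $\|u^1_t-u^2_t\|_{L^\infty} \lesssim \int h(|\phi^1_t - \phi^2_t|)\,|\xi_0|\,dx$, which then closes with the concavity of $h$ and the Bihari lemma. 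Your decomposition $u^1(\phi^1)-u^2(\phi^2) = K\ast(\xi^1-\xi^2)(\phi^1) + [u^2(\phi^1)-u^2(\phi^2)]$ should instead feed the first term into this Lagrangian representation rather than into an $L^1$ estimate.

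A second, less fatal but still genuine, gap is the casual treatment of the rough ODE with an Osgood (log-Lipschitz) drift. You assert that ``uniqueness is already an Osgood argument for the Lagrangian difference, the rough contribution being genuinely Lipschitz,'' but one cannot simply differentiate $|\phi^1_t - \phi^2_t|$ in time: the rough term is not absolutely continuous, and the Osgood comparison argument does not apply directly across a $d\mathbf{Y}$ increment. One needs a flow decomposition that isolates the smooth rough flow generated by $\sigma_\mu\,d\mathbf{Y}^\mu$ (which is $C^1$ in $x$ since $\sigma_\mu\in C^3_b$) and conjugates the drift by it, reducing to a classical Osgood ODE with a time-dependent, still log-Lipschitz drift; or, equivalently, one must run the Osgood comparison at the level of $p$-variation increments via the sewing lemma. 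This is precisely the technical content the review flags when it says the log-Lipschitz flow theory ``can also be extended to the RDE setting.'' Your sketch should make this reduction explicit rather than treat it as routine.

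Finally, a minor point: Proposition~\ref{pro:product} is a product rule for two rough PDE solutions; it is not the device that converts the rough ODE for $\phi$ into the Eulerian continuity equation for $(\phi_t)_\sharp \xi_0$. That conversion is an Itô--Lyons/rough chain-rule for the flow (or a direct verification of the Euler--Taylor expansion of Definition~\ref{def:diff_notation} by testing against smooth $\varphi$ and expanding $\varphi\circ\phi_t$), and you should invoke the appropriate tool rather than Proposition~\ref{pro:product}.
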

The stability results of \cite{GLN} are natural from the perspective of dynamical systems. Applied to the particular case of $d=2$ and when $K$ is the Biot-Savart kernel, \cite{GLN} obtains the following result.
\begin{theorem}
Euler's equation \eqref{eq:vorticity 2d} generates a flow on the set 
$$
\chi_R := \{ \xi_0 \in L^1(\R^2) \cap L^{\infty}(\R^2) : \| \xi_0 \|_{L^1(\R^2)} +  \| \xi_0 \|_{L^{\infty}(\R^2)} \leq R \}.
$$
When $\omega \mapsto \mathbf{Y}(\omega)$ is a random rough path with the rough path cocycle property, the solution mapping 
$$
(t,\omega,\xi_0) \mapsto  \xi_t(\xi_0, \mathbf{Y}(\omega))
$$
defines a continuous random dynamical system when $\chi_R$ is equipped with either the weak$^{\ast}$ topology induced by $L^{\infty}$ or the strong topology induced by $L^p$ for $p \in (1,\infty)$. 
\end{theorem}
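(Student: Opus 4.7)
My plan is as follows. The starting point is the well-posedness result of the preceding theorem, together with its Lagrangian representation; these give a deterministic solution map $(\xi_0,\mathbf{Y})\mapsto \xi_{\cdot}(\xi_0,\mathbf{Y})$, and continuity of $\xi_0\mapsto \xi_t$ on bounded sets of $L^1(\R^2)\cap L^\infty(\R^2)$ in both the weak$^\ast$ and the strong $L^p$ topologies. I will assemble from these the flow property on $\chi_R$ and then promote it to a random dynamical system.

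First I would verify invariance of $\chi_R$. Because $u_t=K\ast\xi_t$ is divergence free (by the properties of the Biot--Savart kernel) and each $\sigma_\mu$ is divergence free by assumption, the rough characteristic flow $\phi_t$ solving $d\phi_t(x)=u_t(\phi_t(x))dt+\sigma_\mu(\phi_t(x))d\mathbf{Y}^\mu_t$ is volume preserving. Hence $\xi_t=(\phi_t)_\sharp\xi_0$ is an equimeasurable rearrangement of $\xi_0$, so $\|\xi_t\|_{L^p(\R^2)}=\|\xi_0\|_{L^p(\R^2)}$ for every $p\in[1,\infty]$; in particular $\xi_t\in\chi_R$ whenever $\xi_0\in\chi_R$.

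Next I would establish the semigroup identity $\xi_{t+s}(\xi_0,\mathbf{Y})=\xi_t\bigl(\xi_s(\xi_0,\mathbf{Y}),\mathbf{Y}(s+\cdot)\bigr)$ via uniqueness: both sides solve \eqref{eq:continuity} on $[0,T-s]$ with driver $\mathbf{Y}(s+\cdot)$ and initial datum $\xi_s(\xi_0,\mathbf{Y})\in\chi_R$, and the Euler--Taylor remainder structure is preserved under time shift, so they must coincide. Continuity in $(t,\xi_0)$ for fixed $\mathbf{Y}$ is then obtained by combining (i) continuity in $\xi_0$ (for each fixed $t$) in either topology, supplied by the preceding stability statement, with (ii) temporal continuity: weak$^\ast$-continuity in $t$ follows from the $p$-variation estimates on remainders inherited from the URD formalism (Proposition~\ref{pro:apriori}), and is upgraded to strong $L^p$-continuity for $p\in(1,\infty)$ by interpolating the uniform $L^\infty$ bound against weak $L^1$-convergence together with the preservation of the $L^p$-norm along the flow.

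Finally, once the deterministic flow $\Phi(t,\mathbf{Y})\colon \chi_R\to\chi_R$ is in hand, the random dynamical system structure follows formally from the assumed rough-path cocycle property $\mathbf{Y}(\theta_s\omega)=\mathbf{Y}(\omega)(s+\cdot)$, which, combined with the deterministic semigroup identity above, yields $\Phi(t+s,\omega)=\Phi(t,\theta_s\omega)\circ\Phi(s,\omega)$. The required joint measurability of $(t,\omega,\xi_0)\mapsto\xi_t$ is inherited from the continuity of the solution map in $(t,\xi_0,\mathbf{Y})$ and the measurability of $\omega\mapsto\mathbf{Y}(\omega)$ taking values in the rough path space. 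The main obstacle I expect is step (ii) of the continuity argument: weak$^\ast$ convergence is only sequential on bounded sets and one has to be careful in patching temporal regularity from the rough remainder estimates with the Lagrangian stability so that the resulting map is genuinely continuous for the product topology; the measurability and cocycle parts are then essentially formal once well-posedness and continuity have been secured.
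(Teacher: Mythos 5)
The paper is a survey and states this theorem without supplying a proof; it is presented as a consequence of the preceding well-posedness and stability theorem, with attribution to \cite{GLN}. There is therefore no ``paper's own proof'' to compare against line by line, but your sketch is a reasonable reconstruction of the argument one expects, and it assembles exactly the right ingredients: invariance of $\chi_R$ via the measure-preserving Lagrangian flow and $L^p$-conservation, the semigroup identity by uniqueness in the solution class, continuity of the solution map from the stability theorem, and the cocycle property of the RDS from the rough-path cocycle plus the deterministic semigroup identity.

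Two small remarks. First, your concern that ``weak$^\ast$ convergence is only sequential on bounded sets'' is less delicate than you suggest: since $L^1(\R^2)$ is separable, the weak$^\ast$ topology restricted to a norm-bounded subset of $L^\infty(\R^2)$ is metrizable, so sequential continuity there is continuity, and the sequential stability furnished by the preceding theorem suffices. Likewise the strong $L^p$ topology on $\chi_R$ is metric, so sequential stability is again enough. Second, for joint continuity in $(t,\xi_0)$ you should make explicit that the a priori estimates (the $p$-variation bounds from Proposition~\ref{pro:apriori} and the conserved $L^1\cap L^\infty$ bound) are uniform over $\chi_R$, which is what lets you glue time-continuity and initial-data-continuity into a single jointly continuous map; merely having separate continuity in each variable would not suffice. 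The upgrade from weak to strong $L^p$ convergence via norm conservation and uniform convexity of $L^p$, $1<p<\infty$, is the correct mechanism. The measurability and cocycle steps are, as you say, then formal given the deterministic continuity and the assumed rough-path cocycle $\mathbf{Y}(\theta_s\omega)=\mathbf{Y}(\omega)(s+\cdot)$.
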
 

\appendix 
\section{Sewing lemma}

Herein we call \textit{control} a map \( w\colon \Delta\to [0,\infty) \) which is continuous, vanishing at the diagonal \( \{(t,t):t\in [0,T]\} \) and superadditive, namely
\[
w(s,\theta) + w(\theta,t)\le w(s,t),\quad 
\text{for each }(s,\theta,t)\in \Delta_2.
\]

We recall the statement of Gubinelli's sewing lemma in a Banach space $(E,|\cdot|)$, as formulated for instance in \cite{gubinelli2010rough}.

\begin{theorem}[Sewing Lemma]
	\label{thm:sewing}
	Let $H \colon \Delta \rightarrow E$ and $C>0$ be such that 
	\begin{equation}
	\label{a_gamma}
	\left|\delta H_{s\theta t}\right|\leq Cw (s,t)^{a }\,
	, \quad 0 \leq s \leq \theta \leq t \leq T
	\end{equation}
	for some $a > 1$, and some control function $w ,$ and denote by $[\delta H]_{a,w }$ the smallest possible constant $C$ in the previous bound. 
	
	There exists a unique pair $I\colon [0,T] \rightarrow E$ and $I^{\natural} : \Delta \rightarrow E$ satisfying
	\[
	I_{t}-I_s = H_{st} + I_{st}^{\natural}
	\]
	where for $0\leq s\leq t\leq T$,
	\[
	|I_{st}^{\natural}| \leq C_a [\delta H]_{a,w } w (s,t)^{a}\,,
	\]
	for some constant $C_a$ only depending on $a$. In fact, $I$ is defined via the Riemann type integral approximation
	\begin{equation} \label{RiemannSum}
	I_t = \lim \sum_{ i=1 }^n H_{t^n_i t^n_{i+1}} \,,
	\end{equation}
	the above limit being taken along any sequence of partitions $\{t^n,n\geq 0\}$ of $[0,t]$ whose mesh-size converges to $0$.
\end{theorem}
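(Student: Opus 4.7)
The plan is to construct $I$ as the limit of Riemann-type sums along shrinking partitions, and to obtain the quantitative bound on $I^\natural$ via the classical Young-type ``remove-a-point'' estimate. Fix $(s,t)\in\Delta$, and for any finite partition $\pi=\{s=t_0<t_1<\cdots<t_n=t\}$ write $I^\pi_{st}:=\sum_{i=0}^{n-1}H_{t_i t_{i+1}}$. First I would show that refinement by one step produces a controlled error: if $\pi'$ is obtained from $\pi$ by removing an interior point $t_i$, then $I^{\pi}_{st}-I^{\pi'}_{st}=-\delta H_{t_{i-1}t_i t_{i+1}}$, and by superadditivity of $w$ there must exist some interior index $i$ for which $w(t_{i-1},t_{i+1})\le \frac{2}{n-1}w(s,t)$. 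Hence one may choose the removed point so that the error is at most $[\delta H]_{a,w}(2/(n-1))^a w(s,t)^a$.

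Next I would iterate this coarsening procedure all the way down from $\pi$ to the trivial partition $\{s,t\}$, summing the errors. Since $a>1$, the series $\sum_{n\ge 2}(2/(n-1))^a$ converges to some constant $C_a$, and I obtain
\begin{equation*}
\bigl|I^\pi_{st}-H_{st}\bigr|\le C_a[\delta H]_{a,w}\,w(s,t)^a.
\end{equation*}
A nearly identical comparison between two partitions $\pi_1,\pi_2$ via their common refinement shows that $\{I^\pi_{st}\}$ is Cauchy as $|\pi|\to 0$, using that $w$ is continuous and vanishes on the diagonal, so that the contributions from intervals of small $w$-size can be made arbitrarily small. This defines $I_{st}:=\lim_{|\pi|\to 0}I^\pi_{st}$, and setting $I_t:=I_{0t}$ together with additivity of Riemann sums gives $I_t-I_s=I_{st}$. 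Defining $I^\natural_{st}:=I_{st}-H_{st}$ then yields the announced bound by passing to the limit in the estimate above.

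For uniqueness, suppose $(\tilde I,\tilde I^\natural)$ is another pair satisfying the decomposition with the same $H$. Then $J_{st}:=I_{st}-\tilde I_{st}=\tilde I^\natural_{st}-I^\natural_{st}$ is additive in the sense $J_{st}=J_{s\theta}+J_{\theta t}$ and satisfies $|J_{st}|\lesssim w(s,t)^a$ with $a>1$. Iterating the additivity along a uniform partition of $[s,t]$ gives $|J_{st}|\le n\cdot C w(s,t)^a/n^a\to 0$ as $n\to\infty$ (using superadditivity to bound each $w(t_i,t_{i+1})\le w(s,t)/n$ up to a constant after a finer regrouping), so $J\equiv 0$.

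The main obstacle is the bookkeeping in the coarsening step: one must verify carefully that the constant $C_a$ produced by the successive removals is independent of the starting partition and depends only on $a$, and that the Cauchy argument between partitions genuinely uses only the continuity and vanishing of $w$ on the diagonal (rather than any Hölder-type structure). Once this combinatorial lemma is in place, the rest of the argument, and in particular the representation \eqref{RiemannSum}, follows by standard density/limit arguments.
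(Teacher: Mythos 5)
The paper does not actually prove Theorem~\ref{thm:sewing}; it only recalls the statement and defers to \cite{gubinelli2010rough}. Your argument is the standard sewing proof (Young's point-removal/coarsening estimate, the maximal bound $|I^\pi_{st}-H_{st}|\le C_a[\delta H]_{a,w}w(s,t)^a$ uniform in the partition, Cauchy property via common refinements, then additivity and uniqueness), and it is correct in substance — essentially the argument of the cited reference. One small repair in the uniqueness step: for a uniform (in time) partition you cannot claim $w(t_i,t_{i+1})\le w(s,t)/n$, since superadditivity only controls the sum $\sum_i w(t_i,t_{i+1})\le w(s,t)$, not the individual terms; instead bound $\sum_i w(t_i,t_{i+1})^a\le \bigl(\max_i w(t_i,t_{i+1})\bigr)^{a-1}w(s,t)$ and use uniform continuity of $w$ together with its vanishing on the diagonal to send $\max_i w(t_i,t_{i+1})\to 0$ as the mesh shrinks (the same device you already use in the Cauchy step). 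Also note that $I$ is determined only up to an additive constant by the decomposition, so uniqueness of the pair should be read with the normalization $I_0=0$ implicit in the Riemann-sum representation \eqref{RiemannSum}.
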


\section{Rough Gronwall}

We recall the following useful inequality (see \cite{hofmanova2018rough,deya2019priori,hocquet2018energy} for a proof) which happens to be a $p$-variation analogue of Gronwall Lemma.
\begin{lemma}[Rough Gronwall lemma]
	\label{lem:gronwall}
	Let $E\colon[0,T]\to [0,\infty)$ be a path such that there exist constants $\kappa,\ell>0,$ a super-additive map $\varphi $ and a control $w $ such that:
	\begin{equation}\label{rel:gron}
		\delta E_{s,t}\leq \left(\sup_{s\leq r\leq t} E_r\right)w (s,t)^{\kappa }+\varphi (s,t)\, ,
	\end{equation}
	for every $(s,t)$ under the smallness condition $w (s,t)\leq \ell$.
	
	Then, there exists a constant $\tau _{\kappa ,\ell}>0$ such that
	\begin{equation}
		\label{concl:gron}
		\sup_{0\leq t\leq T}E_t\leq \exp\left(\frac{w (0,T)}{\tau _{\kappa ,\ell}}\right)\left[E_0+\sup_{0\leq t\leq T}\left|\varphi (0,t)\right|\right].
	\end{equation}
\end{lemma}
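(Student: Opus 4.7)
The plan is to (i) derive a local version of the estimate on subintervals where $w$ is small enough to absorb $\sup E$, (ii) cover $[0,T]$ by a controlled number of such subintervals using superadditivity of $w$, and (iii) iterate the local bound at the endpoints of the partition and exponentiate the resulting geometric factor.

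For step (i), I would fix $\ell':=\min(\ell,2^{-1/\kappa})$ so that $(\ell')^\kappa\le\tfrac12$. On any subinterval $[s,t]\subset[0,T]$ with $w(s,t)\le\ell'$, set $F:=\sup_{r\in[s,t]}E_r$. Applying the hypothesis \eqref{rel:gron} to each pair $(s,r)$ with $r\in[s,t]$ and using the monotonicity $w(s,r)\le w(s,t)$ and $\varphi(s,r)\le\varphi(s,t)$ that follows from superadditivity (assuming $\varphi\ge 0$, as is natural and as the $|\cdot|$ in \eqref{concl:gron} allows by replacing $\varphi$ with $\varphi^+$ if needed), one obtains
\[
E_r\le E_s+F\,w(s,r)^\kappa+\varphi(s,r)\le E_s+\tfrac12F+\varphi(s,t).
\]
Taking the supremum in $r$ and absorbing $\tfrac12F$ to the left yields the local bound $F\le 2E_s+2\varphi(s,t)$ whenever $w(s,t)\le\ell'$.

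For step (ii), I would build a partition $0=t_0<t_1<\cdots<t_N=T$ greedily by setting $t_{i+1}:=\sup\{\tau\ge t_i:w(t_i,\tau)\le\ell'\}$. Continuity of the control forces $w(t_i,t_{i+1})=\ell'$ for $i<N-1$, so superadditivity of $w$ gives $(N-1)\ell'\le w(0,T)$, i.e.\ $N\le 1+w(0,T)/\ell'$. For step (iii), applying the local bound at the successive nodes yields $E_{t_{i+1}}\le 2E_{t_i}+2\varphi(t_i,t_{i+1})$; unrolling this recursion produces
\[
E_{t_n}\le 2^nE_0+\sum_{i=0}^{n-1}2^{n-i}\varphi(t_i,t_{i+1})\le 2^nE_0+2^n\sum_{i=0}^{n-1}\varphi(t_i,t_{i+1})\le 2^n\bigl(E_0+\sup_{0\le t\le T}|\varphi(0,t)|\bigr),
\]
where the last inequality uses superadditivity of $\varphi$ to collapse the sum into $\varphi(0,t_n)$. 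Combining with the local bound on each piece $[t_i,t_{i+1}]$ gives $\sup_{t\in[0,T]}E_t\le C\cdot 2^N(E_0+\sup_t|\varphi(0,t)|)$, and writing $2^N\le 2\exp\bigl((\log 2)w(0,T)/\ell'\bigr)$ and choosing $\tau_{\kappa,\ell}$ slightly smaller than $\ell'/\log 2$ so as to absorb the prefactor $C$ into the exponential delivers \eqref{concl:gron}.

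The main obstacle I expect is the bookkeeping of the $\varphi$ contributions in step (iii): each $\varphi(t_i,t_{i+1})$ carries a geometric weight $2^{n-i}$, and the crude bound $2^{n-i}\le 2^n$ combined with superadditivity of $\varphi$ is just tight enough to control the sum by $2^n\sup_t|\varphi(0,t)|$ without picking up an extra factor of $N$. Once this is observed, the argument is a deterministic repackaging of the classical Gronwall iteration into the language of controls.
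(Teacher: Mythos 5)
Your outline --- absorb the $\sup E$ term on intervals where $w(s,t)^{\kappa}\le\tfrac12$, build a greedy partition whose cardinality is bounded by $1+w(0,T)/\ell'$ via superadditivity of $w$, iterate the local bound at the nodes and exponentiate --- is precisely the argument used in the references the paper cites for this lemma (the survey itself gives no proof), and its skeleton is correct. Two steps need repair, however. First, the reduction to $\varphi\ge0$ ``by replacing $\varphi$ with $\varphi^{+}$'' is not legitimate: the positive part of a superadditive map need not be superadditive (take $\varphi(s,t)=g(t)-g(s)$ with $g(0)=0$, $g(1)=1$, $g(2)=0$: then $\varphi^{+}(0,1)+\varphi^{+}(1,2)=1>0=\varphi^{+}(0,2)$), and your argument leans on nonnegativity of the increments in three places: the monotonicity $\varphi(s,r)\le\varphi(s,t)$ in step (i), the termwise bound $2^{n-i}\le2^{n}$, and the collapse $\sum_{i}\varphi(t_i,t_{i+1})\le\varphi(0,t_n)$ in step (iii) --- with $\varphi^{+}$ that last sum is a positive variation and can vastly exceed $\sup_{t}|\varphi(0,t)|$ when $\varphi$ is signed. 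The fix stays entirely within your scheme: drop the monotonicity and bound $\varphi(s,r)\le\sup_{r\in[s,t]}\varphi(s,r)$ in step (i), and observe that superadditivity of the \emph{original} $\varphi$ gives $\varphi(t_i,r)\le\varphi(0,r)-\varphi(0,t_i)\le2\sup_{0\le t\le T}|\varphi(0,t)|$, so each local contribution is dominated by the same global quantity and the recursion $E_{t_{i+1}}\le 2E_{t_i}+4\sup_t|\varphi(0,t)|$ closes as before (when $\varphi\ge0$, e.g.\ a control as in the cited sources, your original bookkeeping is fine).

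Second, the closing move ``choosing $\tau_{\kappa,\ell}$ slightly smaller than $\ell'/\log 2$ so as to absorb the prefactor $C$ into the exponential'' does not work: when $w(0,T)$ is small the right-hand side of \eqref{concl:gron} is close to $E_0+\sup_t|\varphi(0,t)|$, whereas your iteration (already on a single interval) produces a multiplicative constant of order $2$, and no choice of $\tau$ gives $C\le\exp(w(0,T)/\tau)$ uniformly in $w(0,T)$. What your argument honestly yields is the estimate with an explicit constant in front of the exponential, e.g.\ $\sup_{t}E_t\le C_{\kappa,\ell}\exp\bigl(w(0,T)/\tau_{\kappa,\ell}\bigr)\bigl[E_0+\sup_t|\varphi(0,t)|\bigr]$, which is exactly the form stated and proved in the cited references (there with a factor $2$); the mismatch is with the slightly too clean statement \eqref{concl:gron} rather than with your method.
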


\bibliographystyle{plain}
\bibliography{biblio_urd}
\end{document}